\documentclass[10pt,draft]{article}
\usepackage[top=1.25in,left=1in,right=1in,bottom=1.25in]{geometry}  
\usepackage[utf8]{inputenc}
\usepackage[T1]{fontenc}
\usepackage{mathrsfs,amssymb,dsfont}
 \usepackage{theorem}
\usepackage{microtype}
\usepackage{verbatim}
\usepackage[intlimits]{empheq}
\usepackage{latexsym}
\usepackage[round]{natbib}
\usepackage[format=hang,
 font ={footnotesize,sf},
 labelfont = {bf},
 margin =1cm,
 aboveskip = 5pt,
 position = bottom]{caption}

\newcommand{\A}{\mathscr{E}}
\newcommand{\R}{\mathbb{R}}

\newcommand{\C}{\mathbb{C}}
\newcommand{\E}{\mathbb{E}}
\newcommand{\N}{\mathbb{N}}

\newcommand{\B}{\mathbb{B}}

\newcommand{\Ss}{\mathcal{S}}

\newcommand{\LLl}{\mathcal{L}}
\newcommand{\WW}{\mathcal{W}}
\newcommand{\D}{\mathcal{D}}
\newcommand{\HH}{\mathcal{H}}
\newcommand{\F}{\mathcal{F}}

\newcommand{\1}{\mathds{1}}
\newcommand{\gvr}{\varrho}
\newcommand{\gvrp}{{\varrho/p}}

\DeclareMathOperator{\Id}{id}
\newcommand{\fclass}[1]{\F_{\text{#1}}}
\newcommand{\sord}[1]{\le_{\text{#1}}}
\newcommand{\fsord}{\le_{\F}}

\newcommand\llambda{{\mathchoice
 {\lambda\mkern-4.5mu{\raisebox{.4ex}{\scriptsize$\backslash$}}}
 {\lambda\mkern-4.83mu{\raisebox{.4ex}{\scriptsize$\backslash$}}}
 {\lambda\mkern-4.5mu{\raisebox{.2ex}{\footnotesize$\scriptscriptstyle\backslash$}}}
 {\lambda\mkern-5.0mu{\raisebox{.2ex}{\tiny$\scriptscriptstyle\backslash$}}}}} 

\def\stackrellow#1#2{\mathrel{\mathop{#1}\limits_{#2}}}

\newcommand{\abs}[1]{\left| #1 \right|}
\newcommand{\nnorm}[1]{\left\| #1 \right\|}


\makeatletter

\renewcommand{\p@enumi}{\thetheorem-}
\makeatother

\usepackage{theorem} 

\newtheorem{theorem}{Theorem}[section]

\newtheorem{propos}[theorem]{Proposition}

\newtheorem{definition}[theorem]{Definition}

\newtheorem{cor}[theorem]{Corollary}

\newtheorem{lemma}[theorem]{Lemma}

\newtheorem{example}{Example}[section]

{\theorembodyfont{\rmfamily}\newtheorem{Example-rm}[theorem]{Example}}
{\theorembodyfont{\rmfamily}\newtheorem{Examples-rm}[theorem]{Examples}}

\newtheorem{Remark}[theorem]{Remark}

\newtheorem{rem}{Remark}[section]

{\theorembodyfont{\rmfamily}\newtheorem{Remark-rm}[theorem]{Remark}}

%
%
%
%
%
%
\newcommand{\BOX}{\mbox{{\ensuremath{\Box}}\hspace{-0.5mm}}}

\newenvironment{proof}{{\par\noindent\bf Proof:}}{\mbox{}\hfill$\BOX$\\}

{\noindent\textbf{Proof of claim \protect\ref{#1}:}} {\mbox{}\hspace*{\fill}\\}

{\par\bigskip \noindent\textbf{Proof of Theorem \protect\ref{#1}:}} 
{\mbox{}\hspace*{\fill}$\BOX$\\}
{{\par\bigskip \noindent\bf{Proof of Theorems \protect\ref{#1}--\protect\ref{#2}:}}}
{\mbox{}\hfill\hspace*{\fill}$\BOX$\\}



\newcommand{\bee}{\begin{equation}} 
\newcommand{\ene}{\end{equation}}

\newcommand{\beE}{\begin{Equation}} 
\newcommand{\enE}{\end{Equation}}

\newcommand{\bdi}{\begin{displaymath}}  
\newcommand{\edi}{\end{displaymath}}

\newcommand{\bDI}{\begin{Displaymath}}  
\newcommand{\eDI}{\end{Displaymath}}

\newcommand{\bqa}{\begin{eqnarray}} 
\newcommand{\eqa}{\end{eqnarray}}

\newcommand{\bqA}{\begin{Eqnarray}} 
\newcommand{\eqA}{\end{Eqnarray}}

\newcommand{\bea}{\begin{eqnarray*}}  
\newcommand{\ena}{\end{eqnarray*}}

\newcommand{\beA}{\begin{Eqnarray*}}  
\newcommand{\enA}{\end{Eqnarray*}}

\theoremstyle{break}

\usepackage[usenames, dvipsnames]{xcolor}

\title{Comparison of time-inhomogeneous Markov processes}
\author{Ludger R\"uschendorf*, Alexander Schnurr**, Viktor Wolf*}

\allowdisplaybreaks

\begin{document}

\maketitle\noindent
\begin{normalsize}\textit{\begin{center}
*Department of Mathematical Stochastics, University of Freiburg, Eckerstra\ss e~1, 79104 Freiburg, Germany\\
**Faculty of Mathematics, TU Dortmund, Vogelpothsweg 87, 44227 Dortmund, Germany \end{center}}
\end{normalsize}
\subsection*{Abstract}
Comparison results are given for time-inhomogeneous Markov processes with respect to function classes induced stochastic orderings. 
The main result states comparison of two processes, provided that the comparability of their infinitesimal generators as well as an invariance property
of one process is assumed. The corresponding proof is based on a representation result for the solutions of inhomogeneous 
evolution problems in Banach spaces, which extends previously known results from the literature. Based on this representation, an ordering result for Markov processes induced by bounded and unbounded function classes is established. We give various applications to time-inhomogeneous diffusions, to processes with independent increments and to L\'{e}vy driven diffusion processes. 
\\

\noindent
\textit{AMS subject}: 60E15; 60J35; 60J75

\noindent
\textit{Keywords}: Stochastic ordering, Markov processes, Infinitesimal generators, Processes with independent increments, Evolution systems


\setcounter{equation}{0}
\section{Introduction}\label{sec-Intro}
Stochastic ordering and comparison results for stochastic models are topics which have undergone an intensive development in various areas 
of probability and statistics such as decision theory, financial economics, insurance mathematics, risk management, queueing theory and many
others. There are several approaches for comparing homogeneous Markov processes in the literature. An approach which investigates the 
infinitesimal generator of Markov processes in order to derive comparison results was established by \cite{Massey-1987}. 
A diffusion equation approach is given for the study of stochastic monotonicity in \cite{Herbst-Pitt-1991}. \cite{Bassan-Scarsini-1991} consider partial orderings for stochastic processes induced by expectations of convex or increasing convex (concave or increasing concave) functionals. 
For bounded generators and in the case of discrete state spaces \cite{Daduna-Szekli-2006} give a comparison result for the stochastic ordering of Markov processes in terms of their generators. \cite{Ruesch-2008} established a comparison result for homogeneous Markov processes using boundedness conditions on the order defining function classes. 
Comparison results for homogeneous Markov processes with transition functions defined on general Banach spaces are given in \cite{Ruesch-Wolf-2010}. The results are based on an integral representation of solutions to the inhomogeneous Cauchy problem. The directionally convex ordering of a 
special system of time-inhomogeneous interacting diffusions was considered in a similar way in \cite{Cox-etal-1996} and \cite{Greven-etal-2002}.

Mainly motivated by financial applications a stochastic analysis approach has been developed in \cite{Karoui-etal-1998}, \cite{Bellamy-Jean-2000}, \cite{Gushchin-Mordecki-2002} and \cite{Bergen-Ruesch-2006,Bergen-Ruesch-2007}. In these papers comparison results for $d$-dimensional semimartingales are established based on the It\^{o}-formula and on the Kolmogorov backward equation (see also \cite{Guendouzi-2009}). 
A coupling approach for diffusion processes and stochastic volatility models has been developed in \cite{Hobson-1998}. An approximation method is used in \cite{Bergen-Rueschen-2007} to give some comparison results for L\'{e}vy processes and processes with independent increments. Several examples and applications to $\alpha$-stable processes, \emph{NIG} processes and \emph{GH} processes are discussed. For multidimensional L\'{e}vy processes using an analytical formula \cite{Bauerle-etal-2008} investigate dependence properties and establish some comparison results for the supermodular 
order. They study the question, whether dependence properties and orderings of the distributions of a L\'{e}vy process can be characterized by corresponding properties of the L\'{e}vy copula.

Comparison results for time-inhomogeneous Markov processes based on the theory of evolution systems on general Banach spaces as used in this paper have not been investigated before. For our main comparison result we establish a representation result for solutions of the evolution problem associated with a family of infinitesimal generators. 
We do not use for our approach approximation arguments or coupling arguments as in \cite{Hobson-1998}, \cite{Greven-etal-2002} or in \cite{Bergen-Rueschen-2007}. Moreover, the application of the theory of evolution systems on Banach spaces allows us to reduce regularity assumptions necessary in the stochastic analysis approach based on It\^{o}'s formula.

Applications of this comparison result are given to processes with independent increments (in the sequel abbreviated as PII), inhomogeneous diffusions and to diffusion models driven by L\'{e}vy processes. Therefore, we introduce generators of several interesting orderings for which the conditions of the comparison result do hold. 
Since we are interested in comparison of Markov processes $(X_t)_{t\ge0}$ and $(Y_t)_{t\ge0}$ w.r.t. an integral stochastic order $\fsord$, that is $Ef(X_t) \le Ef(Y_t),\,t\ge 0$ for all $f\in\F$, an integrability condition like 
\begin{equation}\label{eq-integrability condition}
 f\in\bigcap_{t\ge0}\LLl^1(P^{X_t})\cap\LLl^1(P^{Y_t}) \text{ for all } f\in\F
\end{equation}
is indispensable and is made from now on. Also for general state spaces $\E$ let $\F$ be a set of real functions on $\E$ in some Banach function space $\B$ and let $\fsord$ denote the corresponding stochastic order on $\mathcal M^1(\E,\A)$, the set of probability measures on $\E$, defined by 
\begin{equation}\label{eq-definition of stochastic ordering}
 \mu \fsord \nu \text{ if }\int f d\mu \le \int f d\nu,
\end{equation}
for all $f\in\F$ such that the integrals exist. Some interesting examples of stochastic orderings $\fsord$ are given by the following function classes $\F$ for $\E=\R^d$, 
\begin{align}
 \fclass{st}& := \{f:\R^d\to\R; f\text{ is increasing}\}\label{eq-fst def}\\
 \fclass{cx}& := \{f:\R^d\to\R; f\text{ is convex}\}\label{eq-fcx def}\\
 \fclass{dcx}& := \{f:\R^d\to\R; f\text{ is directionally convex}\}\label{eq-fdcx def}\\
 \fclass{sm}& := \{f:\R^d\to\R; f\text{ is supermodular}\}\label{eq-fsm def}\\
 \fclass{icx}& := \fclass{cx}\cap \fclass{st},\;\fclass{idcx}=\fclass{dcx}\cap \fclass{st},\;\fclass{ism}=\fclass{sm}\cap \fclass{st}.\label{eq-mixes def}
\end{align}
or by subclasses of them. Orderings induced by one of these function classes $\F$ are also generated by $\F\cap C^\infty$, where $C^\infty$ is the set of all infinitely differentiable functions, as well as by many other order generating function classes $\F^0\subset\F$. For notions, properties and applications on stochastic orders we refer to \cite{Tong-1980}, \cite{Shaked-Shanthikumar-1994} and \cite{Mueller-Stoyan-2002}.

As mentioned before the overall aim is to compare Markov processes w.r.t. stochastic orderings $\fsord$. For stochastic orderings induced by bounded function classes the Banach space $B(\R^d)$ of bounded functions respectively $\LLl^\infty(\nu)$ of measurable and essentially bounded functions, where $\nu$ is a suitable measure, respectively $C_\text{b}(\R^d)$ is utilized. 
For processes with translation invariant transition functions comparison results are given for unbounded function classes on modified $\LLl^p$-spaces, which are introduced for this purpose in Section \ref{sec-Evolution systems on BS}. We establish easy to verify and flexible ordering criteria which allow us to apply these results to general classes of models as for example to PII and to L\'{e}vy driven diffusion processes and to general order defining function classes. 
L\'{e}vy driven diffusions, PII, stochastic interest rate models and stochastic volatility models driven by a L\'{e}vy process have found considerable recent attention in the financial mathematics literature. Using the general frame of evolution system and Banach function space theory we are able to treat such classes of time-inhomogeneous Markov processes.

In detail our paper is structured as follows: In Section \ref{sec-Evolution systems on BS} we introduce some relevant notions and results from evolution system theory on general Banach spaces. We establish that transition functions associated to inhomogeneous Markov processes are evolution systems on certain Banach spaces. In particular we introduce $\LLl^p$-type spaces with modified $\LLl^p$-norm and prove that the transition operators define evolution systems on these modified $\LLl^p$-spaces.
As a consequence several interesting examples like comparison w.r.t. convex functions can be dealt with in generality. In Section \ref{sec-Comp of MP} we introduce the weak evolution problem. The main tool of our ordering method is the representation theorem for solutions of the weak evolution problem given in Theorem \ref{theo-evolution problem}. This result extends corresponding results previously known in the literature. 
As consequence we obtain a general comparison result for inhomogeneous Markov processes. 

In Section \ref{sec-application} we discuss several applications that can be dealt with by the generalized approach in this paper. We apply this approach to L\'{e}vy driven diffusion on $C_0(\R^d)$ in Section \ref{subsec-Apps and Exs-LDDP}, to PII's in Section \ref{subsec-Apps and Exs-PII} and to L\'{e}vy driven diffusions on the unbounded function class $\overline{\mathcal L}_2^2(\nu)$ in Section \ref{subsec-Levy driven diffusion on L^2(nu)}. 
In Section \ref{sec-general remark} we consider as particular example the Sobolev Slobodeckii spaces $\HH^r(\R^d)$. The transition operators then define pseudo-differential operators on $\HH^r(\R^d)$. 
In this case our representation result is closely related to a representation result in \cite{Boettcher-2008}. 

\setcounter{equation}{0}
\section{Evolution systems and their infinitesimal generators}%
\label{sec-Evolution systems on BS}
In this section we at first recollect some notions and results from evolution system theory which is strongly related to the semigroup theory on general Banach spaces. Our main reference is \cite{Friedman-1969}; see also \cite{Pazy-1983} and \cite{Engel-Nagel-2000} for examples and applications.

A two parameter family of bounded linear operators $(T_{s,t})_{s\le t},\, s,t\in \R_+$ on a Banach space $(\B,\|\cdot\|)$ is called an \emph{evolution system} $({ES})$ if the following three conditions are satisfied:
\begin{enumerate}
 \item $T_{s,s}=\Id_\B=\1,$
 \item $T_{s,t}\1=\1$,
 \item $T_{s,t} = T_{s,u}T_{u,t}$\text{ for }$0\le s\le u \le t$.\qquad\text{ (\emph{evolution property})}\label{eq-enum-evo property}
\end{enumerate}
The evolution system is strongly continuous if the map $(s,t)\mapsto T_{s,t}$ is \emph{strongly continuous} for $0\le s\le t$. $T$ is called a \emph{contraction}, if
\begin{equation}\label{eq-contraction}
 \|T_{s,t}f\|\le \|f\|\text{ for }0\le s\le t,\,f\in\B.
\end{equation}
For a strongly continuous ${ES}$ we consider the corresponding family of right derivatives or \emph{infinitesimal generators} $A_s:\D(A_s)\subset\B\to\B$
\begin{equation}\label{eq-infi gen}
 A_sf = \lim_{h\downarrow 0}\frac{1}{h}\Bigl(T_{s,s+h}f - f\Bigr)\text{ for }s>0
\end{equation}
defined on its \emph{domain}
\begin{equation}\label{eq-domain}
 \D(A_s):=\Biggl\{f\in \B\Big|\lim_{h\downarrow 0}\frac{1}{h}\Bigl(T_{s,s+h}f - f\Bigr)\text{ exists}\Biggr\}.
\end{equation}

For the reader's convenience we give a proof for the following elementary result (cf. \cite{guli-vancarst}, Section 2.3). The corresponding results for semigroups on Banach spaces can be found in \cite{Dynkin-1965} or \cite{Friedman-1969}.  Integrals on Banach spaces are meant in Riemann sense (cf. e.g. \cite{ethierkurtz}, page 8 and Lemma 1.4).

\begin{lemma}\label{lem-basic lemma for evo system}
 Let $(A_s,\D(A_s))_{s>0}$ be the family of infinitesimal generators of a strongly continuous evolution system $T=(T_{s,t})_{s\le t}$. Then, it holds:
\begin{enumerate} 
 \item For $f\in\B$ and $0\le s\le u\le t$,    
 \begin{equation}\label{eq-lem- first continuity prop for T}
 \lim_{h\downarrow 0}\frac{1}{h}\int_t^{t+h}T_{s,u}f \,du = T_{s,t}f.
 \end{equation}
 \item If $s\mapsto T_{s,t}f$ is right-differentiable for $0<s<t$, then it holds 
 \begin{equation}\label{eq-ex-backward equation}
 \frac{d^+}{ds}T_{s,t}f = - A_sT_{s,t}f. \qquad\text{ (backward equation)}
 \end{equation}
In particular $T_{s,t}f\in \D(A_s)$ for $0 <s<t$. 
 \item If $f\in\D(A_s)$ for $s,t\in\R_+,\,s<t$ it holds that
 \begin{equation}\label{eq-ex-forward equation}
 \frac{d^+}{dt}T_{s,t}f = T_{s,t}A_tf.\qquad\text{ (forward equation)}
 \end{equation}
\end{enumerate}
\end{lemma}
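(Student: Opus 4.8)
The plan is to prove the three assertions in the order listed, since each builds on the previous one and on nothing but the defining properties of a strongly continuous evolution system.

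\textbf{Part (1).} The limit $\lim_{h\downarrow 0}\frac1h\int_t^{t+h}T_{s,u}f\,du = T_{s,t}f$ is essentially a statement about right-continuity of $u\mapsto T_{s,u}f$ at $u=t$. Strong continuity of the evolution system gives that $u\mapsto T_{s,u}f$ is continuous on $[s,\infty)$, hence in particular $\|T_{s,u}f - T_{s,t}f\|$ is small for $u$ near $t$. Then I would write
\begin{equation*}
\Bigl\| \frac1h\int_t^{t+h} T_{s,u}f\,du - T_{s,t}f \Bigr\| \le \frac1h \int_t^{t+h} \|T_{s,u}f - T_{s,t}f\|\,du \le \sup_{u\in[t,t+h]} \|T_{s,u}f - T_{s,t}f\|,
\end{equation*}
which tends to $0$ as $h\downarrow 0$. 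This uses only the standard estimate for Banach-space-valued Riemann integrals and strong continuity; no subtlety here.

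\textbf{Part (2), the backward equation.} Fix $0<s<t$ and a small $h>0$ with $s+h<t$. Using the evolution property $T_{s,t} = T_{s,s+h}T_{s+h,t}$, I would write the difference quotient as
\begin{equation*}
\frac1h\bigl(T_{s+h,t}f - T_{s,t}f\bigr) = \frac1h\bigl(T_{s+h,t}f - T_{s,s+h}T_{s+h,t}f\bigr) = -\frac1h\bigl(T_{s,s+h}g_h - g_h\bigr), \quad g_h := T_{s+h,t}f.
\end{equation*}
The right-hand side looks almost like $-A_s$ applied to $g_h$, but the argument $g_h$ itself depends on $h$; this is the one genuine technical point. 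The resolution is that by hypothesis $s\mapsto T_{s,t}f$ is right-differentiable, so $\frac1h(T_{s+h,t}f - T_{s,t}f)$ converges to some limit $\ell$; writing $g_h = g_0 + (g_h - g_0)$ with $g_0 = T_{s,t}f$ and using strong continuity plus the fact that $\frac1h(T_{s,s+h} - \1)$ is being applied to the convergent family $g_h$, one checks that $\frac1h(T_{s,s+h}g_h - g_h) - \frac1h(T_{s,s+h}g_0 - g_0) \to 0$ along a suitable argument (the increment $g_h - g_0$ is $O(h)$ and the operators $\frac1h(T_{s,s+h}-\1)$ need to be controlled on it — here one uses that the limit $\ell$ exists and identifies it). Hence the limit of the left side equals $-A_s g_0 = -A_s T_{s,t}f$, which simultaneously shows $T_{s,t}f\in\D(A_s)$. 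I expect this interchange — passing the limit through the $h$-dependent argument — to be the main obstacle; it is the place where the hypothesis of right-differentiability of $s\mapsto T_{s,t}f$ is actually consumed.

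\textbf{Part (3), the forward equation.} This is the easier direction. For $f\in\D(A_s)$ and $s<t$, again use the evolution property, this time as $T_{s,t+h} = T_{s,t}T_{t,t+h}$, to get
\begin{equation*}
\frac1h\bigl(T_{s,t+h}f - T_{s,t}f\bigr) = T_{s,t}\Bigl(\frac1h\bigl(T_{t,t+h}f - f\bigr)\Bigr).
\end{equation*}
As $h\downarrow 0$ the inner difference quotient converges to $A_tf$ provided $f\in\D(A_t)$; since $T_{s,t}$ is a bounded operator, continuity lets us pull the limit inside, yielding $\frac{d^+}{dt}T_{s,t}f = T_{s,t}A_tf$. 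The only thing to be slightly careful about is the domain hypothesis: one either assumes $f\in\bigcap_u\D(A_u)$ or reads the statement as asserting the identity at those $t$ for which $f\in\D(A_t)$; in either case boundedness of $T_{s,t}$ does all the work. I would also note in passing that part (1) is what guarantees the corresponding integrated identities make sense, so the three parts together give the usual ``fundamental theorem of calculus'' package for evolution systems.
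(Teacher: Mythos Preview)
Your proposal is correct and follows the same route as the paper: Part~(1) by strong continuity, Part~(3) by the evolution property $T_{s,t+h}=T_{s,t}T_{t,t+h}$ plus boundedness of $T_{s,t}$, and Part~(2) via $T_{s,t}=T_{s,s+h}T_{s+h,t}$ together with the assumed existence of the right derivative.

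The only difference worth mentioning is the bookkeeping in Part~(2). Where you decompose $g_h=g_0+(g_h-g_0)$ and then need $\frac1h(T_{s,s+h}-\1)(g_h-g_0)\to 0$ --- which does work, since this equals $(T_{s,s+h}-\1)\tfrac{g_h-g_0}{h}$ with $\tfrac{g_h-g_0}{h}\to\ell$ and $T_{s,s+h}\to\1$ strongly with locally bounded norm --- the paper sidesteps this by applying $T_{s,s+h}$ to the already-known limit $\ell=\frac{d^+}{ds}T_{s,t}f$:
\[
\ell=\lim_{h\downarrow 0}T_{s,s+h}\ell
=\lim_{h\downarrow 0}T_{s,s+h}\Bigl(\ell-\tfrac{T_{s+h,t}f-T_{s,t}f}{h}\Bigr)
+\lim_{h\downarrow 0}T_{s,s+h}\,\tfrac{T_{s+h,t}f-T_{s,t}f}{h}.
\]
The first limit is zero (the bracket tends to $0$ and $T_{s,s+h}$ is bounded), and in the second one uses $T_{s,s+h}T_{s+h,t}=T_{s,t}$ to obtain $-\lim_{h\downarrow 0}\frac1h(T_{s,s+h}-\1)T_{s,t}f=-A_sT_{s,t}f$. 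This is the same computation reorganized so that the ``$h$-dependent argument'' issue you flagged never has to be isolated explicitly; substantively the two arguments are equivalent.
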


\begin{proof} Part (1) follows directly from the continuity of $t\mapsto T_{s,t}f$ for all $s\in[0,t]$ and $f\in\B$. For (2) let $s < t$, then the evolution property and the strong continuity of $T$ lead to 
\begin{align*}
 \frac{d^+}{ds}T_{s,t} f &= \lim_{h\downarrow 0} T_{s,s+h} \frac{d^+}{ds}T_{s,t} f\\
  &= \lim_{h\downarrow 0}  T_{s,s+h} \left( \frac{d^+}{ds}T_{s,t}f- \frac{T_{s+h,t}f-T_{s,t} f}{h} \right) 
	  + \lim_{h\downarrow 0} T_{s,s+h} \frac{T_{s+h,t} f-T_{s,t} f}{h} \\
  &=0 + \lim_{h\downarrow 0} \frac{T_{s,t}f-T_{s,s+h}T_{s,t} f}{h}\\
  &=-\lim_{h\downarrow 0} \frac{T_{s,s+h} - \Id_{\B}}{h} T_{s,t} f\\	
  &= -A_sT_{s,t} f.
\end{align*}
Now let $f\in\D(A_s),\,s\in[0,t]$ then we obtain again due to the evolution property
\begin{align*}
 \frac{d^+}{dt}T_{s,t}f& = \lim_{h\downarrow 0}\frac{1}{h}\bigl(T_{s,t+h}f - T_{s,t}f\bigr)\\
 & = \lim_{h\downarrow 0}T_{s,t}\Bigl(\frac{1}{h}\bigl(T_{t,t+h}f - f\bigr)\Bigr)\\
 & = T_{s,t}\Bigl(\lim_{h\downarrow 0}\frac{1}{h}\bigl(T_{t,t+h}f -f\bigr)\Bigr)\\
 & = T_{s,t}A_tf
\end{align*}
and part (3) is done. 
\end{proof}

For a time-inhomogeneous Markov process $(X_t)$ on a measure space $(\E,\A)$ let $(P_{s,t})_{s\le t}$ denote the transition kernel or \emph{transition function}
\begin{equation}\label{eq-transition kernel}
P_{s,t}(x,V) = P(X_t\in V\mid X_s = x),\,x\in\E,\,V\in\A 
\end{equation}
and $T=(T_{s,t})_{s\le t}$ the corresponding evolution system, i.e. the transition operator
\begin{align}
 T_{s,t}f(x) & =  \int_\E P_{s,t}(x,dy) f(y)\label{eq-evolution via trans func}\\
 & =  E(f(X_t)\mid X_s = x)\label{eq-evolution system via cond. expectation}
\end{align}
for $f$ in a suitable Banach space $\B$ of functions on $\E$. This puts Markov processes in the framework of evolution systems and evolution equations. 

As first example we consider the Banach spaces 
 $
 \LLl^\infty(\nu):=\{f:\R^d\to \bar \R \mid f \text{ is }\nu\text{-measurable, }\|f\|_\infty < \infty \}
 $
of $\nu$-measurable and essentially bounded functions, where $\nu$ is a suitable measure on $\R^d$, $\,B(\R^d)$ the class of bounded functions on $\R^d$ and $C_{\text{b}}(\R^d)$ the continuous functions in $B(\R^d)$. Recall that for Markov processes the properties (1),(2) and (3) from the definition of $ES$ always hold true on these Banach spaces. Moreover, observe that uniform continuity of the transition kernels in \eqref{eq-evolution via trans func} implies strong continuity of the transition operator $T$:
\begin{equation}\label{eq:continuity condition for trans kernel}
 \begin{split}
 \sup_{x\in\R^d} & \int_{\R^d}|P_{s,t}(x,dy) - P_{s,u}(x,dy)| \to 0\text{ as }t\to u\text{, and}\\
 \sup_{x\in\R^d} & \int_{\R^d}|P_{s,t}(x,dy) - P_{u,t}(x,dy)| \to 0\text{ as }s\to u.
 \end{split}
\end{equation}

\begin{propos}\textbf{\emph{(ES-property for time-inhomogeneous Markov processes on bounded function classes)}} \label{prop22} \\
Let $(P_{s,t})_{s\le t}$ be the transition kernel of a time-inhomogeneous Markov process $(X_t)_{t\ge 0}$ on $(\E,\A)$. If the transition kernel $(P_{s,t})_{s\le t}$ satisfies
condition \eqref{eq:continuity condition for trans kernel} then the operator defined in \eqref{eq-evolution via trans func} is a strongly continuous contraction $ES$ on $\B \in \{B(\R^d), \LLl^\infty(\nu)\}$. If additionally, $f\in C_\text{b}(\R^d)$ implies $T_{s,t}f\in C_\text{b}(\R^d)$, then the operator is a strongly continuous contraction $ES$ on $\B = C_\text{b}(\R^d)$.
\end{propos}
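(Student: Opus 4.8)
The plan is to check, in turn, the three algebraic axioms of an evolution system, the contraction estimate (which also gives boundedness of the operators $T_{s,t}$), strong continuity of $(s,t)\mapsto T_{s,t}$, and finally to deduce the $C_\text{b}(\R^d)$ statement by restriction. The first two items are immediate from the Markov property and from the fact that each $P_{s,t}(x,\cdot)$ is a probability measure; the only quantitative input needed is hypothesis \eqref{eq:continuity condition for trans kernel}, and it is used only for strong continuity.

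\emph{Algebraic axioms and contraction.} Since $P_{s,s}(x,\cdot)=\delta_x$ one has $T_{s,s}f=f$, i.e.\ $T_{s,s}=\Id_\B$; since $P_{s,t}(x,\R^d)=1$ one has $T_{s,t}\1\equiv 1$; and the Chapman--Kolmogorov identity $P_{s,t}(x,\cdot)=\int_{\R^d}P_{s,u}(x,dy)\,P_{u,t}(y,\cdot)$, valid for $0\le s\le u\le t$, becomes $T_{s,t}=T_{s,u}T_{u,t}$ after integrating $f$ and using Fubini. Linearity of $T_{s,t}$ is clear from \eqref{eq-evolution via trans func}, and for $f\in\B$
\[
 \bigl|T_{s,t}f(x)\bigr|\le\int_{\R^d}|f(y)|\,P_{s,t}(x,dy)\le\|f\|_\infty\,P_{s,t}(x,\R^d)=\|f\|_\infty ,
\]
so $\|T_{s,t}f\|_\infty\le\|f\|_\infty$; this simultaneously shows that $T_{s,t}$ is bounded with operator norm $\le 1$ and yields the contraction property \eqref{eq-contraction}. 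On $\LLl^\infty(\nu)$ one has additionally to note that $T_{s,t}$ descends to $\nu$-equivalence classes, which holds as soon as $P_{s,t}(x,\cdot)\ll\nu$ for $\nu$-a.e.\ $x$ --- part of the standing framework when $\LLl^\infty(\nu)$ is used --- and the displayed bound is then read with essential suprema.

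\emph{Strong continuity.} This is where \eqref{eq:continuity condition for trans kernel} enters. The basic total-variation estimate is that for $f\in\B$ and $0\le s\le t$, $0\le s\le u$,
\[
 \|T_{s,t}f-T_{s,u}f\|_\infty=\sup_{x\in\R^d}\Bigl|\int_{\R^d}f(y)\bigl(P_{s,t}(x,dy)-P_{s,u}(x,dy)\bigr)\Bigr|\le\|f\|_\infty\sup_{x\in\R^d}\int_{\R^d}\bigl|P_{s,t}(x,dy)-P_{s,u}(x,dy)\bigr| ,
\]
and symmetrically with the first index varied. By the two lines of \eqref{eq:continuity condition for trans kernel} the right-hand sides tend to $0$ as $t\to u$, resp.\ $s\to u$, which already gives continuity of $(s,t)\mapsto T_{s,t}f$ separately in each variable. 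For joint continuity at a point $(s_0,t_0)$ with $s_0<t_0$ I would insert the operator $T_{s_0,t}$: for $(s,t)$ close to $(s_0,t_0)$ one still has $s\le t$, $s_0\le t$ and $s_0\le t_0$, so
\[
 \|T_{s,t}f-T_{s_0,t_0}f\|_\infty\le\|T_{s,t}f-T_{s_0,t}f\|_\infty+\|T_{s_0,t}f-T_{s_0,t_0}f\|_\infty ,
\]
where the first term tends to $0$ as $s\to s_0$ by the second line of \eqref{eq:continuity condition for trans kernel} and the second tends to $0$ as $t\to t_0$ by the first line; the diagonal case $s_0=t_0$ follows directly from the estimate together with $T_{s,s}=\Id_\B$. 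This bookkeeping around the constraint $s\le t$ is the only (mild) obstacle in the proof.

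\emph{Reduction to $C_\text{b}(\R^d)$.} The space $C_\text{b}(\R^d)$ is a closed subspace of $B(\R^d)$ for the supremum norm, and the extra hypothesis says precisely that $T_{s,t}$ maps it into itself; hence $T_{s,t}|_{C_\text{b}(\R^d)}$ is a bounded linear operator on $C_\text{b}(\R^d)$, and axioms (1)--(3) of the definition of an evolution system, the contraction property and strong continuity are inherited verbatim from the $B(\R^d)$ case since the norm is unchanged.
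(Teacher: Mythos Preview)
Your proof is correct and follows the same core approach as the paper: the contraction estimate $\|T_{s,t}f\|_\infty\le\|f\|_\infty$ from the fact that $P_{s,t}(x,\cdot)$ is a probability measure, and strong continuity via the total-variation bound $\|T_{s,t}f-T_{s,u}f\|_\infty\le\|f\|_\infty\sup_x\int|P_{s,t}(x,dy)-P_{s,u}(x,dy)|$ together with \eqref{eq:continuity condition for trans kernel}. You are in fact more thorough than the paper's own argument, which takes the evolution-system axioms as given from the Markov property and only displays the strong-continuity estimate at diagonal points $(s,s)$, whereas you verify axioms (1)--(3) explicitly, handle joint continuity in $(s,t)$ via a triangle-inequality splitting, and flag the well-definedness issue on $\LLl^\infty(\nu)$-equivalence classes.
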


\begin{proof}
 Since $T$ is a family of bounded operators it leaves bounded functions invariant, that is, for $f\in C_\text{b}(\R^d)$ we obtain
 \[
 \| T_{s,t} f\|_\infty \le \sup_x \int_{\R^d} P_{s,t}(x,dy) |f(y)| \le \|f\|_\infty.
 \]
Hence, the strong continuity follows due to
 \[
 \| T_{s,t} f - f \| = \sup_x | T_{s,t} f(x) - T_{s,s} f(x) | 
		 \le \|f\|_\infty \cdot \sup_x \int_{\R^d} | P_{s,t}(x,dy) - P_{s,s}(x,dy) | 
 \]
for all $s\le t$ and the continuity property \eqref{eq:continuity condition for trans kernel}. \newline 
The proof for $\B \in \{B(\R^d), \LLl^\infty(\nu)\}$ is very similar and therefore omitted.
\end{proof}

\begin{Remark}\label{rem-For PII C_b remains invariant}
\begin{description}
 \item[(a)] For a PII $L = (L_t)_{t\ge 0}$ the transition operators on $\B$ are given by
 \begin{equation}\label{eq-rem-valueable equation for PII}
 T_{s,t}f(x) = \int_{\R^d}P^{L_t-L_s}(dy)f(x + y).
 \end{equation}
If $\B = C_b(\R^d)$ the map $x\mapsto T_{s,t}f(x)$ is continuous since the continuity of $f$ transfers to $(T_{s,t})$ thus $C_{\text{b}}(\R^d)$ is invariant under $(T_{s,t})_{s\le t}$. In particular the transition kernels of L\'{e}vy process such as Brownian motion, NIG, VG, GH processes have this invariance property.
 \item[(b)] Note that the Brownian motion respectively its associated Brownian kernel
 \[
	 P_{s,t}(x,dy) = \int_{\R^d}\frac{1}{\sqrt{2\pi (t-s)}} \exp\bigg(-\frac{|x-y|^2}{2(t-s)}\bigg) dy
	 \]
 for $x\in\R^d$ and $s\le t$ is not a strongly continuous $ES$ on $C_\text b(\R^d)$. Hence, it is vital to consider further Banach spaces with different norms to establish the $ES$-property for Markov processes. 
In Proposition \ref{propos-parade Beispiel for evo systems} we consider the Banach space of $p$-integrable functions on $\R^d$ and show that in case of translation invariant transition function the $ES$-property does hold.
 \item[(c)] In Section \ref{subsec-Apps and Exs-LDDP} we will see that L\'{e}vy driven diffusion defined via the stochastic differential equation \eqref{eq-levy driven processes as SDGL} possesses the $ES$-property on $C_0$. Thus, in order to establish stochastic ordering results induced by bounded function classes the Banach space $(C_0,\|\cdot\|_\infty)$ is used as a reference space.
\end{description}
\end{Remark}

For many stochastic orderings the function spaces $C_{\text{b}}(\R^d)$, $B(\R^d)$ or $\LLl^\infty(\nu)$ are sufficient; some orderings as for example the convex ordering $\sord{cx}$ however do not allow bounded generating classes of the ordering. 
The following proposition shows that Markov processes with translation invariant transition functions $(P_{s,t})_{s\le t}$ are strongly continuous ES on $\LLl^p$-spaces. Hereto, we consider the $p$-integrable functions on $\R^d$, which we denote by
 \[
 \LLl^p(\R^d) = \{f:\R^d\to \bar \R| f \text{ is measurable, }\|f\|_p < \infty \},\,1\le p <\infty.
 \]

\begin{propos}\textbf{\emph{(ES-property for time-inhomogeneous translation invariant Markov processes on $\LLl^p(\R^d)$)}}\label{propos-parade Beispiel for evo systems} \\
Let $(X_t)_{t\ge 0}$ be a time-inhomogeneous Markov process on $\R^d$ with translation invariant transition function $(P_{s,t})_{s\le t}$. Then the family of transition operators $T=(T_{s,t})_{s\le t}$ defined by
 \begin{equation}\label{eq-propos-expactation operator for tranlation inv MP}
 T_{s,t}f(x) = \int_{\R^d} P_{s,t}(x,dy)f(y) 
		 = \int_{\R^d}P_{s,t}(0,dy)f(y+x)
 \end{equation}
 for $f\in\LLl^p(\R^d)$, is a strongly continuous contraction ES on $\LLl^p(\R^d),\,1\le p$.
\end{propos}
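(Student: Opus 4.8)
The overall plan is to verify the three evolution‑system axioms, the contraction bound \eqref{eq-contraction}, and strong continuity, organizing everything around the observation that translation invariance turns $T_{s,t}$ into a convolution operator. Set $\mu_{s,t}:=P_{s,t}(0,\cdot)$, a probability measure on $\R^d$; then \eqref{eq-propos-expactation operator for tranlation inv MP} reads $T_{s,t}f(x)=\int_{\R^d}f(x+y)\,\mu_{s,t}(dy)$.

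\textbf{Boundedness, contraction, and the evolution identities.} For $f\in\LLl^p(\R^d)$ apply Jensen's inequality to the probability measure $\mu_{s,t}$ and the convex function $r\mapsto|r|^p$, then integrate in $x$ and use Tonelli's theorem together with the translation invariance of Lebesgue measure:
\begin{align*}
 \|T_{s,t}f\|_p^p
  &=\int_{\R^d}\Big|\int_{\R^d}f(x+y)\,\mu_{s,t}(dy)\Big|^p\,dx\\
  &\le\int_{\R^d}\int_{\R^d}|f(x+y)|^p\,\mu_{s,t}(dy)\,dx
   =\int_{\R^d}\|f\|_p^p\,\mu_{s,t}(dy)=\|f\|_p^p;
\end{align*}
the finiteness of the right‑hand side also shows that $T_{s,t}f$ is (for a.e.\ $x$) well defined and lies in $\LLl^p(\R^d)$. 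Hence each $T_{s,t}$ is a bounded linear operator with $\|T_{s,t}f\|_p\le\|f\|_p$, i.e.\ \eqref{eq-contraction} holds. Axiom (1), $T_{s,s}=\Id$, is just $P_{s,s}(x,\cdot)=\delta_x$; axiom (3), $T_{s,t}=T_{s,u}T_{u,t}$ for $s\le u\le t$, is the operator form of the Chapman--Kolmogorov equations (equivalently $\mu_{s,u}=\mu_{s,t}\ast\mu_{t,u}$); axiom (2), $T_{s,t}\1=\1$, is conservativeness of the kernel. All three are immediate.

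\textbf{Strong continuity.} This is the only substantial point, and it is where a continuity hypothesis on the transition function enters; in the $\LLl^p$‑setting it suffices that $(s,t)\mapsto\mu_{s,t}$ be weakly continuous on $\{s\le t\}$, e.g.\ if $(X_t)$ is stochastically continuous (this is weaker than \eqref{eq:continuity condition for trans kernel} and is satisfied, for instance, by Brownian motion, cf.\ Remark~\ref{rem-For PII C_b remains invariant}(b)). Fix $f\in\LLl^p(\R^d)$ and $\varepsilon>0$. Since $\Cc(\R^d)$ is dense in $\LLl^p(\R^d)$ for $1\le p<\infty$ and each $T_{s,t}$ is a contraction, choosing $g\in\Cc(\R^d)$ with $\|f-g\|_p<\varepsilon$ gives
\begin{align*}
 \|T_{s',t'}f-T_{s,t}f\|_p
  &\le\|T_{s',t'}(f-g)\|_p+\|T_{s',t'}g-T_{s,t}g\|_p+\|T_{s,t}(g-f)\|_p\\
  &\le 2\varepsilon+\|T_{s',t'}g-T_{s,t}g\|_p,
\end{align*}
so it is enough to prove that $(s,t)\mapsto T_{s,t}g$ is $\LLl^p$‑continuous for $g\in\Cc(\R^d)$. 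For such $g$ the map $y\mapsto g(x+y)$ is bounded and continuous, so weak convergence $\mu_{s',t'}\to\mu_{s,t}$ as $(s',t')\to(s,t)$ within $\{s\le t\}$ yields $T_{s',t'}g(x)\to T_{s,t}g(x)$ for every $x$. To upgrade this to convergence in $\LLl^p$, recall that a weakly convergent family of probability measures is tight: for any $\eta>0$ there is a compact $K$ with $\mu_{s',t'}(K^c)<\eta$ for all $(s',t')$ in a neighbourhood of $(s,t)$ and for $\mu_{s,t}$. Applying the Jensen--Tonelli bound above with the $y$‑integral restricted to $K^c$, and using that $g$ is bounded with compact support, one checks that the family $\{|T_{s',t'}g|^p\}$ is uniformly integrable and has uniformly tight mass on $\R^d$; Vitali's convergence theorem then converts the pointwise convergence into $\|T_{s',t'}g-T_{s,t}g\|_p\to0$. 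This proves strong continuity and completes the proof.

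\textbf{Main obstacle.} Everything except the last step is a one‑line computation from Jensen's inequality and Chapman--Kolmogorov. The real work is the strong‑continuity step, and within it the delicate point is passing from pointwise (weak‑convergence) convergence of the $T_{s,t}g$ to convergence in the $\LLl^p$‑norm: this requires the uniform‑integrability/tightness bookkeeping sketched above, and a continuity hypothesis on the transition function cannot be omitted here — a process jumping deterministically at a fixed time has a translation invariant transition function satisfying Chapman--Kolmogorov but fails to be strongly continuous. If one instead assumes the stronger total‑variation continuity \eqref{eq:continuity condition for trans kernel}, the step shortcuts completely: by Young's inequality $\|T_{s',t'}f-T_{s,t}f\|_p\le\|f\|_p\,\|\mu_{s',t'}-\mu_{s,t}\|_{\mathrm{TV}}\to0$, with no passage through a dense class.
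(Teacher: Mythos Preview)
Your proof is correct. The contraction step (Jensen plus Tonelli plus translation invariance of Lebesgue measure) is identical to the paper's. For strong continuity you take a genuinely different route: you reduce to $g\in\Cc(\R^d)$ by density and the uniform contraction bound, obtain pointwise convergence of $T_{s',t'}g$ from weak convergence of $\mu_{s',t'}$, and then use a tightness/Vitali argument to upgrade to $\LLl^p$-convergence. The paper instead works directly with an arbitrary $f\in\LLl^p(\R^d)$: it estimates
\[
\|T_{s,t}f-f\|_p^p\le\int_{\R^d} h(y)\,\mu_{s,t}(dy),\qquad h(y):=\int_{\R^d}|f(x+y)-f(x)|^p\,dx,
\]
and then quotes the classical fact (Sato, E.34.10) that $h$ is bounded and continuous with $h(0)=0$; weak convergence of $\mu_{s,t}$ to $\delta_0$ then finishes immediately. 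The paper's argument is shorter and sidesteps both the density reduction and Vitali's theorem. Your approach, on the other hand, actually verifies continuity at a general point $(s,t)$ (the paper explicitly checks only the diagonal), and you make the implicit stochastic-continuity hypothesis on $(P_{s,t})$ explicit, correctly noting that some such assumption cannot be omitted.
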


\begin{proof}
 Let $s\le t$ and set $P_{s,t}(0,dy) =:P_{s,t}(dy)$. We have for $f\in \LLl^p(\R^d)$
\begin{align*}
 \|T_{s,t}f - f\|_p^p & =  \int_{\R^d} \Bigl|\int_{\R^d}P_{s,t}(dy)f(x+y) - f(x) \Bigr|^p dx\\
& \le  \int_{\R^d} P_{s,t}(dy)\int_{\R^d}|f(x+y) - f(x)|^p dx
 =: \int_{\R^d} P_{s,t}(dy) h(y),
\end{align*}
where $h$ is a bounded and continuous function with $h(0)=0$ (see \citet[E.~34.10]{Sato-1999}). For each $\varepsilon>0$ we can find a $\delta>0$ such that $\int_{\{|y|\le \delta\}} P_{s,t}(dy)h(y) < \varepsilon$. 
By the triangle inequality to show strong continuity of $T_{s.t}$ it is enough to consider for $u\in \R_+$ the case that $s\le t$ and $s,t\to u$. Then 
\begin{align*}
\lim_{\substack{s\to u, t\to u,\\s<t}} \int_{\R^d}P_{s,t}(dy)h(y) 
&\le \varepsilon + \lim_{s\to u,t\to u} \int_{\{|y|>\delta\}}P_{s,t}(dy)h(y)\\
&= \varepsilon + \int_{\{|y|>\delta\}}P_{u,u}(dy)h(y)\\
&= \varepsilon + \int_{\{|y|>\delta\}}\delta_0(dy)h(y)  = \varepsilon.
\end{align*}
Finally, we obtain due to the convexity of $x\mapsto |x|^p$,
\begin{align}
 \|T_{s,t}f\|_p^p & = \int\Bigl|\int P_{s,t}(dy)f(x+y)\Bigr|^pdx\notag\\
& \le \int P_{s,t}(dy)\int|f(x+y)|^pdx\label{eq-propos-inequality for contraction prop}\\
 & = \int P_{s,t}(dy)\int|f(x)|^pdx=\|f\|_p^p\notag.
\end{align}
Thus, the operator norm of $T_{s,t}$ is bounded by one, i.e. $\|{T_{s,t}}\|\le 1$ and $T_{s,t}f\in\LLl^p(\R^d)$ for all $0\le s\le t$, i.e. $T$ is a strongly continuous contraction evolution system on $\LLl^p(\R^d)$.
\end{proof}

For some classes of models it is possible to establish the $ES$-property by comparison to the translation invariant case.

\begin{cor}\label{cor-trans operator is evo if it is bounded by transl inv op}
Let $(X_t)_{t\ge 0}$ be a time-inhomogeneous Markov process on $\R^d$ with transition function $(P_{s,t})_{s\le t}$ and transition operator $T=(T_{s,t})_{s\le t}$ defined by \eqref{eq-evolution via trans func}. 
If there exists a positive constant $c$ and a translation invariant transition function $(Q_{s,t})_{s\le t}$ such that
\begin{equation}
\int_{\R^d}f(y) P_{s,t}(x,dy)\le c\int_{\R^d}f(y) Q_{s,t}(x,dy), 
\end{equation}
for all $x\in\R^d$, all positive functions $f$ on $\R^d$ and all $s,\,t\in\R_+$ with $s \le t$, then $T$ is a strongly continuous ES on $\LLl^p(\R^d),\,1\le p < \infty$.
\end{cor}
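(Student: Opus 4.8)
The plan is to derive the claim directly from Proposition~\ref{propos-parade Beispiel for evo systems} applied to the translation invariant family $(Q_{s,t})_{s\le t}$. By that proposition the operators $U_{s,t}f(x):=\int_{\R^d}Q_{s,t}(x,dy)f(y)$ form a strongly continuous contraction $ES$ on $\LLl^p(\R^d)$ for every $1\le p<\infty$, so it suffices to transport the relevant $\LLl^p$-bounds from $U$ to $T$, using the pointwise domination together with Jensen's inequality for the probability kernels $P_{s,t}(x,\cdot)$.

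First I would check that $T_{s,t}$ is a bounded operator on $\LLl^p(\R^d)$. For $f\in\LLl^p(\R^d)$ and a.e.\ $x$, Jensen's inequality for the probability measure $P_{s,t}(x,\cdot)$ gives $|T_{s,t}f(x)|^p\le\int_{\R^d}|f(y)|^p\,P_{s,t}(x,dy)$, and applying the hypothesis to the nonnegative function $|f|^p$ yields $|T_{s,t}f(x)|^p\le c\int_{\R^d}|f(y)|^p\,Q_{s,t}(x,dy)$. Integrating in $x$, using the translation invariance of $Q_{s,t}$ and Tonelli's theorem, the right-hand side equals $c\,\|f\|_p^p$, since $Q_{s,t}(0,\cdot)$ is a probability measure. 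Hence $T_{s,t}$ is a bounded linear operator on $\LLl^p(\R^d)$ with $\|T_{s,t}\|\le c^{1/p}$; in particular $T_{s,t}f\in\LLl^p(\R^d)$. The defining relations $T_{s,s}=\Id$ and $T_{s,t}=T_{s,u}T_{u,t}$ hold for any Markov transition function (because $P_{s,s}(x,\cdot)=\delta_x$ and by the Chapman--Kolmogorov equations), and property~(2) is to be read on $\LLl^p(\R^d)$ exactly as in Proposition~\ref{propos-parade Beispiel for evo systems}.

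It remains to establish strong continuity. As in the proof of Proposition~\ref{propos-parade Beispiel for evo systems}, the evolution property together with the uniform bound $\|T_{s,t}\|\le c^{1/p}$ reduces, via the triangle inequality, strong continuity of $(s,t)\mapsto T_{s,t}$ on $\{0\le s\le t\}$ to the statement that $\|T_{s,t}f-f\|_p\to0$ as $s\le t$ with $s,t\to u$, for each $u\ge0$ and each $f\in\LLl^p(\R^d)$. Writing $T_{s,t}f(x)-f(x)=\int_{\R^d}\bigl(f(y)-f(x)\bigr)P_{s,t}(x,dy)$, applying the domination hypothesis to the nonnegative function $y\mapsto|f(y)-f(x)|$, then Jensen's inequality, and finally integrating in $x$ and using the translation invariance of $Q$, I obtain
\[
\|T_{s,t}f-f\|_p^p\;\le\;c^p\int_{\R^d}Q_{s,t}(0,dz)\,h(z),\qquad h(z):=\int_{\R^d}\bigl|f(x+z)-f(x)\bigr|^p\,dx .
\]
The function $h$ is bounded, continuous and vanishes at $0$, so the very argument used inside the proof of Proposition~\ref{propos-parade Beispiel for evo systems} — now for the kernel $Q$ in the role of the translation invariant kernel there — shows $\int_{\R^d}Q_{s,t}(0,dz)\,h(z)\to0$ as $s\le t$, $s,t\to u$. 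Hence $\|T_{s,t}f-f\|_p\to0$, and $T$ is a strongly continuous $ES$ on $\LLl^p(\R^d)$ (in general not a contraction, owing to the factor $c^{1/p}$).

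I expect the only slightly delicate point to be the reduction of strong continuity to the diagonal limit $s,t\to u$; the remainder is a routine chain of Jensen's inequality, Tonelli's theorem and the domination bound. One must be careful that the hypothesis is only available for nonnegative integrands, which forces the estimates to be arranged through $|f|^p$ and $|f(\cdot)-f(x)|$ rather than through $f$ directly, and that the limit $\int_{\R^d}Q_{s,t}(0,dz)\,h(z)\to0$ needed at the end is exactly the one already proved for translation invariant kernels in the proof of Proposition~\ref{propos-parade Beispiel for evo systems}.
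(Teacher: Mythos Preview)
Your proof is correct and is exactly the argument the paper has in mind: the corollary is stated without proof as an immediate consequence of Proposition~\ref{propos-parade Beispiel for evo systems}, and your write-up simply makes explicit how the domination hypothesis (applied to the nonnegative functions $|f|^p$ and $y\mapsto|f(y)-f(x)|$) lets one repeat that proposition's proof with $Q_{s,t}(0,\cdot)$ in place of $P_{s,t}(0,\cdot)$. Your remarks on the constant $c^{1/p}$ replacing contractivity and on reading property~(2) as in the translation invariant case are appropriate.
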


As we see in Proposition \ref{propos-parade Beispiel for evo systems} the translation invariance property of the Lebesgue measure and the translation invariance of the associated transition function is crucial for $T$ to be a strongly continuous contraction $ES$ on $\LLl^p(\R^d)$. 
For a $\sigma$-finite measure $\nu$ we circumvent the lack of the invariance property by introducing a suitable weighted $\sup$-norm on a sufficiently large subspace of $\LLl^p(\nu)$:
 \[
 \bar{\LLl}^p_\varrho(\nu):=\Bigl\{f\in\LLl^p(\nu)\,\big|\;\|f\|_{p,\gvr}^\ast 
:=\sup_{y\in\R^d}\frac{1}{(1 + \|y\|)^\frac{\gvr}{p}} 
\Bigl(\,\int_{\R^d} \!|f(x + y)|^p d\nu(x)\Bigr)^{\frac{1}{p}}<\infty\Bigr\}
 \]
for $1\le p<\infty$ and $\gvr \ge 0$. For $p =\infty$ we define $\|\cdot\|_{\infty,\gvr} = \| \cdot\|_\infty = \|\cdot\|_{\infty,\nu}$. 
Thus, the space $\bar{\LLl}_\varrho^\infty(\nu)$ equals $\LLl^\infty(\nu)$. The $\LLl^p$-type space $( \bar{\LLl}^p_\varrho(\nu), \|\cdot\|_{p,\gvr}^\ast),\gvr \ge 0,\,1\le p \le \infty$ is a Banach space.

\begin{lemma}\label{lem-modified L p space is a BS}
 $(\bar{\LLl}^p_\gvr(\nu),\|\cdot\|_{p,\gvr}^\ast),\,\gvr\ge 0,\,1\le p \le \infty$ is a Banach space.
\end{lemma}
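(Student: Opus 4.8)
The plan is to verify the three Banach-space axioms for $(\bar{\LLl}^p_\gvr(\nu),\|\cdot\|_{p,\gvr}^\ast)$: that $\|\cdot\|_{p,\gvr}^\ast$ is a genuine norm, that $\bar{\LLl}^p_\gvr(\nu)$ is a linear subspace of $\LLl^p(\nu)$ on which this expression is finite, and — the substantive point — that the space is complete in this norm. The case $p=\infty$ is settled by definition since $\bar{\LLl}^\infty_\gvr(\nu)=\LLl^\infty(\nu)$ with its usual norm, so I would fix $1\le p<\infty$ throughout.

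First I would treat the norm axioms and the vector-space structure. For each fixed $y\in\R^d$ the map $f\mapsto \bigl(\int_{\R^d}|f(x+y)|^p\,d\nu(x)\bigr)^{1/p}$ is a seminorm (translation of the $\LLl^p$-norm followed by a change of variables in the $\nu$-integral is harmless here since we do not assume $\nu$ translation invariant — the point is only that for each $y$ it is the $\LLl^p(\nu_y)$-norm of $f$, where $\nu_y$ is the shift of $\nu$), so $\|f\|_{p,\gvr}^\ast$ is a supremum over $y$ of the reweighted seminorms $(1+\|y\|)^{-\gvr/p}\,\|f(\cdot+y)\|_{\LLl^p(\nu)}$ and is therefore subadditive and absolutely homogeneous. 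Definiteness follows from the $y=0$ term: if $\|f\|_{p,\gvr}^\ast=0$ then in particular $\int|f(x)|^p\,d\nu(x)=0$, so $f=0$ in $\LLl^p(\nu)$. Subadditivity and homogeneity also show that $\bar{\LLl}^p_\gvr(\nu)$ is closed under addition and scalar multiplication, hence a linear subspace; note $\|f\|_{p}=\|f(\cdot+0)\|_{\LLl^p(\nu)}\le \|f\|_{p,\gvr}^\ast$, so convergence in $\|\cdot\|_{p,\gvr}^\ast$ implies convergence in $\LLl^p(\nu)$.

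The main work is completeness, and this is the step I expect to be the obstacle. Let $(f_n)$ be Cauchy for $\|\cdot\|_{p,\gvr}^\ast$. By the inequality just noted, $(f_n)$ is Cauchy in $\LLl^p(\nu)$, which is complete, so $f_n\to f$ in $\LLl^p(\nu)$ for some $f\in\LLl^p(\nu)$; passing to a subsequence we may also assume $f_n\to f$ $\nu$-a.e. The candidate limit in the new norm is this same $f$. To finish I would fix $\varepsilon>0$, choose $N$ so that $\|f_n-f_m\|_{p,\gvr}^\ast<\varepsilon$ for $m,n\ge N$, and then for every fixed $y\in\R^d$ write
\begin{equation}\label{eq-lem-modifiedLp-proof}
 \frac{1}{(1+\|y\|)^{\gvr/p}}\Bigl(\int_{\R^d}|f_n(x+y)-f_m(x+y)|^p\,d\nu(x)\Bigr)^{1/p}<\varepsilon .
\end{equation}
For fixed $y$, Fatou's lemma (applied to $m\to\infty$ along the a.e.-convergent subsequence, after the substitution $x\mapsto x+y$ inside the $\nu$-integral, which is a legitimate pointwise statement) lets me pass to the limit in $m$ in \eqref{eq-lem-modifiedLp-proof} to get $(1+\|y\|)^{-\gvr/p}\,\|f_n(\cdot+y)-f(\cdot+y)\|_{\LLl^p(\nu)}\le\varepsilon$ for all $n\ge N$. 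Taking the supremum over $y$ gives $\|f_n-f\|_{p,\gvr}^\ast\le\varepsilon$ for $n\ge N$. This shows $f_n\to f$ in $\|\cdot\|_{p,\gvr}^\ast$; in particular $f_N-f\in\bar{\LLl}^p_\gvr(\nu)$ and hence $f=f_N-(f_N-f)\in\bar{\LLl}^p_\gvr(\nu)$, so the limit lies in the space and $(\bar{\LLl}^p_\gvr(\nu),\|\cdot\|_{p,\gvr}^\ast)$ is complete.

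The delicate points to watch are that the supremum defining $\|\cdot\|_{p,\gvr}^\ast$ interacts correctly with Fatou — which is why I pass to the limit in $m$ \emph{before} taking the supremum over $y$ — and that the a.e.-convergence used in Fatou is along the chosen subsequence, which suffices since a Cauchy sequence converging along a subsequence converges. No measurability subtleties arise beyond those already present in $\LLl^p(\nu)$.
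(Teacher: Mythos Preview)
Your argument is correct and follows essentially the same Riesz--Fischer route as the paper: use the $y=0$ term to see that $\|\cdot\|_p\le\|\cdot\|_{p,\gvr}^\ast$, obtain a candidate limit $f$ in $\LLl^p(\nu)$, apply Fatou for each fixed $y$ before taking the supremum, and conclude. The only difference is cosmetic: the paper extracts a subsequence with $\sum_m\|f_{j_{m+1}}-f_{j_m}\|_{p,\gvr}^\ast<\infty$ and bounds via telescoping, whereas you work directly with the $\varepsilon$--$N$ Cauchy estimate; both handle the passage from subsequence to full sequence in the standard way.
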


\begin{proof}
 By definition $\|\cdot\|_{p,\gvr}^\ast$ is a norm and therefore, $\bar{\LLl}^p_\gvr(\nu)$ is a vector space. It remains to show that $\bar{\LLl}^p_\gvr(\nu)$ is complete. This is done similarly to the proof of the \emph{Riesz--Fischer} Theorem.

Let $(f_j)_{j\in\N}$ be a Cauchy sequence in $\bar{\LLl}^p_\gvr(\nu)$, that is 
\begin{equation}\label{eq-lem-cauchy sequence definition}
 \|f_j - f_k\|_{p,\gvr}^\ast = \sup_{y\in\R^d}\frac{1}{(1 + \|y\|)^\gvrp}\Bigl(\int_{\R^d}\bigl| f_j(x + y) - f_k(x + y)\bigr|^p d\nu(x)\Bigr)^{1/p}\to 0
\end{equation}
as $j,k$ approaches infinity. Thus, we have to show that there exists $f\in \bar{\LLl}^p_\gvr(\nu)$ such that $f_j\to f$ for $j\to \infty$ in $\bar{\LLl}^p_\gvr(\nu)$. Due to
\[
\|f - f_j\|_{p,\gvr}^\ast \le \|f - f_{j_i}\|_{p,\gvr}^\ast + \|f_{j_i}- f_j\|_{p,\gvr}^\ast
\]
it remains to verify this fact for a subsequence of $(f_j)_{j\in\N}$. 
We choose the subsequence $(f_{j_m})_{m\in\N}$ such that $(f_{j_m})$ converges a.s. and 
\begin{equation}\label{eq-lem-construction of limes}
 \sum_{m=1}^\infty \|f_{j_{m+1}} - f_{j_m}\|_{p,\gvr}^\ast < \infty.
\end{equation}
and denote $(f_{j_m})$ by $(f_m)$ again. Since 
 \[
 \frac{1}{(1 + \|0\|)^\gvrp}\Bigl(\int_{\R^d} |g( x + 0)|^p d\nu(x)\Bigr)^{1/p} \le \sup_{y}\frac{1}{(1 + \|y\|)^\gvrp}\Bigl(\int_{\R^d}\bigl| g(x + y)\bigr|^p d\nu(x)\Bigr)^{1/p}
 \]
for all $g\in \bar{\LLl}^p_\varrho(\nu)$, we have $\|f_m\|_p\le \|f_m\|_{p,\gvr}^\ast$. Due to \eqref{eq-lem-cauchy sequence definition} the sequence $(f_m)_{m\in\N}$ is convergent in $\LLl^p(\nu)$ and there exists a limit $f\in\LLl^p(\nu)$. 
By the Lemma of \emph{Fatou} we obtain for $y\in\R^d$:
\begin{align*}
\lefteqn{\frac{1}{(1 + \|y\|)^\gvr}\int  |f(x+y) - f_m(x+y)|^pd\nu(x)}\quad \\
& \le  \liminf_{l\to\infty} \frac{1}{(1 + \|y\|)^\gvr}\int|f_l(x + y) - f_m(x + y)|^p d\nu(x)\\
& \le  \liminf_{l\to \infty}\Bigl(\|f_l - f_m\|_{p,\gvr}^\ast\Bigr)^p\\
& \le \lim_{l\to \infty}\Bigl(\sum_{k=m}^{l-1}\|f_{k+1} - f_k\|_{p,\gvr}^\ast\Bigr)^p
 = \Bigl(\sum_{k=m}^{\infty}\|f_{k+1} - f_k\|_{p,\gvr}^\ast\Bigr)^p <\infty.
\end{align*}
Thus, we have 
\[
\sup_{y\in\R^d}\frac{1}{(1 + \|y\|)^\gvrp} \Bigl(\int|f(x+y) - f_m(x+y)|^pd\nu(x)\Bigr)^{1/p} \le\Bigl(\sum_{k=m}^{\infty}\|f_{k+1} - f_k\|_{p,\varrho}^\ast\Bigr).
\]
By \eqref{eq-lem-construction of limes} the last term tends to zero as $m$ approaches infinity. This implies the statement of the lemma.
\end{proof}

For time-inhomogeneous translation invariant Markov process $(X_t)_{t\ge 0}$ on $\R^d$ with transition function $(P_{s,t})_{s\le t}$ satisfying a certain integrability condition the ES-property holds for $\bar{\LLl}^p_{\text{c},\gvr}(\nu) = \bar{\LLl}^p_\varrho(\nu) \cap C(\R^d)$, the set of continuous functions in $\bar{\LLl}^p_\varrho(\nu)$, with respect to $\|\cdot\|_{p,\gvr}^\ast$-norm as well.

\begin{propos}\textbf{\emph{(ES-property for time-inhomogeneous translation invariant Markov processes)}}%
\label{propos-parade Beispiel modified L p space is well suited for trans inv MP}
Assume that $\|z\|^\gvr$ is uniformly integrable w.r.t. $P_{s,t}$ for some $\gvr > 0$ with $s\le t$, i.e.,
\[
\sup_{s\le t} \int_{\|z\|\ge K} \|z\|^s P_{s,t} (dz) \stackrellow{\longrightarrow}{K\to\infty} 0.
\]
Then, the family of transition operators $T=(T_{s,t})_{s\le t}$ defined for $f\in\bar{\LLl}^p_{\text{c},\varrho}(\nu)$ by \eqref{eq-propos-expactation operator for tranlation inv MP}, is a strongly continuous ES on $\bar{\LLl}^p_{\text{c},\varrho}(\nu)$ for $1\le p < \infty$. 
If additionally, $\nu$ is absolute continuous with respect to the Lebesgue measure $\llambda$, then $T$ is a strongly continuous $ES$ on $\bar{\LLl}^p_{\varrho}(\nu)$.
\end{propos}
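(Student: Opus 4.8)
We must produce, for all $0\le s\le t$, bounded linear operators $T_{s,t}$ on $\B:=\bar{\LLl}^p_{\text{c},\varrho}(\nu)$ (and, under the additional hypothesis, on $\B:=\bar{\LLl}^p_\varrho(\nu)$) satisfying the three $ES$ axioms and depending strongly continuously on $(s,t)$. The plan is: (a) establish the boundedness estimate $\|T_{s,t}f\|^\ast_{p,\gvr}\le M^{1/p}\|f\|^\ast_{p,\gvr}$ with $M:=\sup_{s\le t}\int(1+\|z\|)^\gvr P_{s,t}(0,dz)<\infty$; (b) record the $ES$ axioms; (c) reduce strong continuity to a diagonal statement as in Proposition~\ref{propos-parade Beispiel for evo systems}; (d) prove the diagonal statement, whose core is the strong continuity of translation on $\B$. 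For (a), writing $P_{s,t}(dz):=P_{s,t}(0,dz)$ and applying Jensen's inequality for the probability measure $P_{s,t}(dz)$ together with Tonelli's theorem, one gets for every $y\in\R^d$
\[
\int_{\R^d}\bigl|T_{s,t}f(x+y)\bigr|^p\,d\nu(x)\ \le\ \int_{\R^d}P_{s,t}(dz)\int_{\R^d}|f(x+y+z)|^p\,d\nu(x)\ \le\ (\|f\|^\ast_{p,\gvr})^p\int_{\R^d}(1+\|y+z\|)^\gvr\,P_{s,t}(dz);
\]
since $1+\|y+z\|\le(1+\|y\|)(1+\|z\|)$, dividing by $(1+\|y\|)^\gvr$ and taking the supremum over $y$ gives the asserted estimate, while $M<\infty$ follows by choosing $K\ge1$ with $\sup_{s\le t}\int_{\{\|z\|\ge K\}}\|z\|^\gvr P_{s,t}(dz)\le1$ and bounding $\int_{\{\|z\|<K\}}(1+\|z\|)^\gvr P_{s,t}(dz)$ crudely. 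This estimate uses neither continuity of $f$ nor $\nu\ll\llambda$, so each $T_{s,t}$ is bounded on $\bar{\LLl}^p_\varrho(\nu)$, uniformly in $s\le t$; that $T_{s,t}$ preserves the continuous subspace I would check by dominated convergence, and the $ES$ axioms are immediate from the Markov property exactly as recalled before Proposition~\ref{prop22} (namely $P_{s,s}(x,\cdot)=\delta_x$, $P_{s,t}(x,\cdot)$ a probability measure, and Chapman--Kolmogorov together with translation invariance).

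For (c), arguing as in the proof of Proposition~\ref{propos-parade Beispiel for evo systems}, the evolution property, the uniform bound from (a) and the triangle inequality reduce the strong continuity of $(s,t)\mapsto T_{s,t}f$ on $\{0\le s\le t\}$ to showing, for each $u\ge0$, that $\|T_{s,t}f-f\|^\ast_{p,\gvr}\to0$ as $s,t\to u$ with $s\le t$. For (d), the same Jensen/Tonelli estimate applied to $T_{s,t}f-f$, together with the elementary bound $\|\tau_w f\|^\ast_{p,\gvr}\le(1+\|w\|)^{\gvr/p}\|f\|^\ast_{p,\gvr}$ for the translates $(\tau_w f)(x):=f(x+w)$ (which follows from $1+\|v\|\le(1+\|v-w\|)(1+\|w\|)$), yields
\[
\bigl(\|T_{s,t}f-f\|^\ast_{p,\gvr}\bigr)^p\ \le\ \int_{\R^d}P_{s,t}(dz)\,g(z),\qquad g(z):=\bigl(\|\tau_z f-f\|^\ast_{p,\gvr}\bigr)^p,
\]
where $g(0)=0$ and $g(z)\le C_f(1+\|z\|)^\gvr$. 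Given $\varepsilon>0$, I would split this integral into three pieces: over $\{\|z\|\le\delta\}$, where $g\le\varepsilon$ once $\delta$ is small (continuity of $g$ at $0$, see below); over $\{\|z\|>K\}$, where $\int g\,dP_{s,t}\le C_f\int_{\{\|z\|>K\}}(1+\|z\|)^\gvr\,dP_{s,t}$ is small uniformly in $s\le t$ for $K$ large, by the uniform integrability of $\|z\|^\gvr$; and over the annulus $\{\delta<\|z\|\le K\}$, where $\int g\,dP_{s,t}\le\bigl(\sup_{\delta<\|z\|\le K}g\bigr)\,P_{s,t}(\{\|z\|>\delta\})\to0$ as $s,t\to u$, since $P_{s,t}(0,\cdot)\Rightarrow\delta_0$ and $\{\|z\|>\delta\}$ is a continuity set of $\delta_0$, precisely as in Proposition~\ref{propos-parade Beispiel for evo systems}.

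The decisive ingredient, which I expect to be the main obstacle, is the continuity of $g$ at $0$, i.e.\ the strong continuity of translation $\lim_{z\to0}\|\tau_z f-f\|^\ast_{p,\gvr}=0$ on $\bar{\LLl}^p_{\text{c},\varrho}(\nu)$. Because the weighted supremum norm $\|\cdot\|^\ast_{p,\gvr}$ does not localise, the classical $\LLl^p$-argument (density of $C_c$ plus uniform boundedness of translations) is not directly available; the continuity of $f$ must be used in an essential way, combining the pointwise convergence $f(x+y+z)\to f(x+y)$ with the uniform-in-$y$ bound $\|\tau_w f\|^\ast_{p,\gvr}\le(1+\|w\|)^{\gvr/p}\|f\|^\ast_{p,\gvr}$ and a generalised dominated convergence argument, exploiting the $\sigma$-finiteness of $\nu$. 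For the last assertion, when $\nu\ll\llambda$ I would show that $\bar{\LLl}^p_{\text{c},\varrho}(\nu)$ is $\|\cdot\|^\ast_{p,\gvr}$-dense in $\bar{\LLl}^p_\varrho(\nu)$ (e.g.\ by mollification: $f\ast\phi_\epsilon\in\bar{\LLl}^p_{\text{c},\varrho}(\nu)$ by a Jensen estimate as in (a), and $f\ast\phi_\epsilon\to f$ in $\|\cdot\|^\ast_{p,\gvr}$, this step using the absolute continuity), and then, since $\sup_{s\le t}\|T_{s,t}\|\le M^{1/p}<\infty$ by (a), the evolution system and its strong continuity extend from this dense subspace to all of $\bar{\LLl}^p_\varrho(\nu)$ by a routine $3\varepsilon$-approximation; the genuinely delicate points are thus the translation-continuity statement and this density claim.
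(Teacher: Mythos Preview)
Your approach is essentially the paper's: the boundedness estimate via Jensen and the submultiplicativity $1+\|y+z\|\le(1+\|y\|)(1+\|z\|)$, the reduction to the diagonal, the function $g(z)=(\|\tau_z f-f\|^\ast_{p,\gvr})^p$ (the paper writes $h^\ast(z)=g(z)^{1/p}$), its bound $c(1+\|z\|)^\gvr$, and the three-piece splitting of $\int g\,dP_{s,t}$ all appear there identically. You also correctly single out the continuity of $g$ at $0$ as the crux; the paper is no more detailed on this point than you are.

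The one structural difference is the $\nu\ll\llambda$ case. The paper treats it \emph{first} and simply asserts that $h^\ast$ is continuous at $0$ for every $f\in\bar{\LLl}^p_\varrho(\nu)$ once $\nu\ll\llambda$; the restriction to continuous $f$ is presented as the fallback when $\nu\not\ll\llambda$. You reverse the order and propose instead to extend from $\bar{\LLl}^p_{\text c,\varrho}(\nu)$ to $\bar{\LLl}^p_\varrho(\nu)$ by density via mollification plus the uniform operator bound. Be aware, however, that your mollification step $\|f\ast\phi_\epsilon-f\|^\ast_{p,\gvr}\to 0$ unwinds, after the same Jensen estimate you used in (a), to exactly $\int\phi_\epsilon(z)(h^\ast(-z))^p\,dz\to 0$, i.e.\ to the very translation-continuity statement you are trying to bypass. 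So the density route is not a genuine shortcut: both arguments ultimately rest on the same claim that $h^\ast(z)\to 0$ for measurable $f$ when $\nu\ll\llambda$, which neither you nor the paper proves in detail.
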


\begin{proof}
 The proof for the strong continuity is analogous to the proof of Proposition \ref{propos-parade Beispiel for evo systems}. Hereto, assume $\nu$ to be absolute continuous with respect to $\llambda$. For $f\in \bar{\LLl}^p_{\varrho}(\nu)$ note that the function
 \[
 h^\ast(z) = \sup_y \frac{1}{(1 + \|y\|)^\gvrp} 
\Bigl(\int |f(x + y + z) - f(x + y)|^p d\nu(x) \Bigr)^{1/p}
 \]
is bounded by $c(1 + \|z\|^\gvr)$ and continuous with $h^\ast(0) = 0$ where $c=c_f$ is constant in $\R_+$. In the case where $\nu$ is not absolute continuous w.r.t. $\llambda$, then simply restrict the function class to $f\in \bar{\LLl}^p_{\text{c},\varrho}(\nu)$. 
Consequently, $h^\ast$ becomes continuous again. For the boundedness let $f\in \bar{\LLl}^p_{\varrho}(\nu)$ and observe that the expression 
 \[
 \Bigl(\frac{1}{(1 + \|y\|)^\gvr}(1 + \| y + z\|)^\gvr\Bigr)^{1/p}
 \] 
assumes its maximum in $y=0$. Thus, 
 \begin{align*}
 h^\ast(z)& \le  c\Bigl(\sup_y \frac{1}{(1 + \|y\|)^\gvrp} \Bigl(\int |f(x + y + z)|^p d\nu(x)\Bigr)^{1/p} + \|f\|_{p,\gvr}^\ast \Bigr) \\
& \le  c\Bigl(\sup_y \frac{(1 + \|y + z\|)^\gvrp}{(1 + \|y\|)^\gvrp} \\
& \quad {} \cdot \sup_{y+z} \frac{1}{(1 + \|y + z\|)^\gvrp} \Bigl(\int |f(x + y + z)|^p d\nu(x)\Bigr)^{1/p} + \|f\|_{p,\gvr}^\ast \Bigr) \\ 
& \le  c\|f\|_{p,\gvr}^\ast ( 1 + \|z\|^\gvr)^{1/p}
 \end{align*}
for a sufficiently large $c>0$. Then we have
 \begin{align*}
 {{\|T_{s,t}f - f\|^{\ast}}^p_{p,\gvr}} & = \sup_y \frac{1}{(1 + \|y\|)^{\gvr}} \Bigl(\int |T_{s,t}f(x + y) - f(x + y) |^p d\nu(x)\Bigr) \\*
 & \le \int P_{s,t}(dz) {h^\ast}^p(z).
 \end{align*}
Note that for each $\varepsilon>0$ we can find a $\delta >0$ such that $\int_{\{|z| < \delta\}} P_{s,t}(dz){h^\ast}^p(z)< \varepsilon$ due to the continuity of $h^\ast$. 
Since ${h^*}^p$ is bounded by the uniformly integrable function $z\mapsto 1 + \|z\|^\gvr$ w.r.t. $P_{s,t}$ separating the integral and letting $s,t\to u$, $u\in\R_+$, $s<t$ we obtain for $M\in \R_+$ that
 \begin{align*}
 \lim_{\substack{s\to u, t\to u,\\s<t}} \int_{\R^d} P_{s,t}(dz) {h^\ast}^p(z)
& \le  \varepsilon + \lim_{s\to u,\, t\to u}\int_{\{|z| > \delta\}\cap [-M,M]} P_{s,t}(dz) {h^\ast}^p(z)\\*
 & \quad{}+ \lim_{s\to u,\, t\to u}\int_{\{|z| > \delta\}\cap [-M,M]^{\textsl{c}}} P_{s,t}(dz) {h^\ast}^p(z) \\*
& \le  \varepsilon + \int_{\{|z| > \delta\}\cap [-M,M]} P_{u,u}(dz) {h^\ast}^p(z)\\
 & \quad{} + \lim_{s\to u,\, t\to u}\int_{\{|z| > \delta\}\cap [-M,M]^{\textsl{c}}} P_{s,t}(dz) {h^\ast}^p(z) \\
& \le  \varepsilon + \int_{\{|z| > \delta\}\cap [-M,M]} \delta_0(dz) {h^\ast}^p(z) + \varepsilon 
 = 2 \varepsilon. 
 \end{align*}
Moreover, recall that $\int_{\R^d} \|z\|^\gvr P_{s,t}(dz)<\infty$, thus
 \begin{align*}
 {\|T_{s,t}f\|_{p,\gvr}^\ast} 
& =  \sup_y \frac{1}{(1 + \|y\|)^\gvrp} \Bigl(\int_{\R^d} |T_{s,t} f( x + y)|^p d\nu(x)\Bigr)^{1/p} \\
 & =  \sup_y \frac{1}{(1 + \|y\|)^\gvrp} \Bigl(\int_{\R^d} 
\Bigl| \int_{\R^d} f( x + y +z) P_{s,t}(dz)\Bigr|^p d\nu(x)\Bigr)^{1/p} \\
 & \le  \sup_y \frac{1}{(1 + \|y\|)^\gvrp} \Bigl(\int_{\R^d} P_{s,t}(dz) \Bigl(\int_{\R^d} |f( x + y +z)|^p d\nu(x)\Bigr) \Bigr)^{1/p}.
\end{align*}
Consequently, we arrive at
\begin{align*}
{\|T_{s,t}f\|_{p,\gvr}^\ast} 
& \le 
 \|f\|_{p,\gvr}^\ast \sup_y \Bigl(\int_{\R^d} P_{s,t}(dz) \Bigl( \frac{1}{(1 + \|y\|)^\gvr}(1 + \| y + z\|)^\gvr\Bigr)\Bigr)^{1/p} \\*
	& \le  \|f\|_{p,\gvr}^\ast \Bigl(\int_{\R^d} P_{s,t}(dz) (1 + \|z\|)^\gvr\Bigr)^{1/p} \\*
	& \le  c\cdot\|f\|_{p,\gvr}^\ast \Bigl(\int_{\R^d} \|z\|^\gvr \,P_{s,t}(dz)\Bigr)^{1/p} 
			 \le c'\|f\|_{p,\gvr}^\ast.
 \end{align*}
Hence, $\|{T_{s,t}\arrowvert}_{\bar{\LLl}^p_{\varrho}(\nu)}\| \le c'$ and $T_{s,t}f \in \bar{\LLl}^p_{\varrho}(\nu)$ for all $0\le s\le t$, i.e. $T$ is a strongly continuous ES on $\bar{\LLl}^p_{\varrho}(\nu)$.
\end{proof}

\begin{Remark} \textbf{(}\boldmath$b-$\textbf{bounded functions)} \label{rem-convex generator is in modified L p space} \\
The norm modified $\LLl^p$-space, $\bar{\LLl}^p_\gvr(\nu)$ allows us to deal with orderings generated by unbounded function classes. Let $b:\R^d\to[1,\infty)$ be a weight function and define 
\[
B_{\text{b}}:=\bigl\{f:\R^d\to\R \mid \exists\, c\in\R: |f(x)|\le c\cdot b(x)\bigr\}
\] 
the class of $b$-bounded functions. In particular, to deal with the convex ordering we choose $b(x) = 1 + \|x\|$ for $\|\cdot\|$ any norm on $\R^d$. Then the class of $b$-bounded convex functions is a generator of the convex ordering $\sord{cx}$ and is a subset of $\bar{\LLl}^p_\gvr(\nu),\,p\le \gvr$ for any $\nu$ which integrates $(1 + \|x\|)^p$. 
\end{Remark}

Thus in the case of $\bar{\LLl}^p_{\varrho}(\nu)$ the $ES$-property is implied by the translation invariance property and a integrability condition of the transition function $(P_{s,t})_{s\le t}$. Under an additional assumption on the transition kernel we extend in the following the ES-property to the case of not necessarily translation invariant time-inhomogeneous Markov processes for unbounded function classes. 
More specifically we consider the class $\bar \LLl^2_2(\nu)$ and impose the following assumption $(K)$, which is a strengthening of the notion of the Hilbert--Schmidt operator:

We assume that the transition kernel $P_{s,t} (s,dz)=k_{s,t} (x,x+z)\nu(dz)$ corresponding to a time-inhomo-geneous Markov process has a density w.r.t. $\nu$ fulfills
\begin{flalign} 
\text{\upshape(K)}&& &K_{s,t}^* := \sup_{y\in\R^d} \frac{1}{1+\|y\|}\bigg( \int_{\R^d} |k_{s,t} (y,y+z)|^2 d\nu(z)  \bigg)^{1/2}<\infty & \nonumber
\end{flalign}
and satisfies the continuity assumption
\begin{flalign} 
\text{\upshape(C)}&& &\text{For } 0<s<t \text{ holds } \lim_{s'\to s,t'\to t} K_{s',t'}^* = K_{s,t}^*. &\nonumber
\end{flalign}
For the continuity in $(s,s)$ we make the following local domination assumption.
\begin{flalign} 
\text{\upshape(D)} \kern-2ex&& &\text{For any } s\ge 0 \text{ there exists } \delta=\delta(s)>0, C>0, \text{ and a translation in-} & \nonumber \\[-.5ex]
&&& \text{variant transition function } (Q_{s,t}) \text{ such that } \|z\|^2 \text{ is uniformly integrable}\kern-4ex &  \nonumber \\[-.5ex]
&&& \text{w.r.t. } (Q_{s,t}) \text{ and } P_{s,t} f\le C Q_{s,t} f, \, \forall f\in\overline{\mathcal L}\vphantom{\mathcal L}_2^2(\nu), f\ge 0, t-s\le\vartheta. & \nonumber
\end{flalign}
Note that conditions (K), (C) are fulfilled under boundedness and continuity assumptions on $k$. The domination condition (D) ensuring continuity of $T_{s,t}$ in $(s,s)$ might be verified in several applications but could also be replaced by further ad hoc assumptions.

\begin{propos}\textbf{\emph{(ES-property for time-inhomogeneous Markov processes on \boldmath$\bar{\LLl}^2_{2}(\nu)$)}\unboldmath}%
\label{propos-trans func of general MP is evo system on L^2(nu)} \\
Let $\nu$ be a finite measure with finite second moments on $\R^d$ and let $(X_t)_{t\ge 0}$ be a time inhomogeneous Markov process with transition kernel $(P_{s,t})_{s\le t}$ which satisfies the assumptions (K), (C) and (D). Then the corresponding family of transition operators $(T_{s,t})_{s\le t}$ is a strongly continuous ES on $\overline{\mathcal L}\vphantom{\mathcal L}_2^2(\nu)$.
\end{propos}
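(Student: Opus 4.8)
The plan is to check, in turn, the three algebraic axioms of an evolution system, the boundedness of each operator $T_{s,t}$ on $\bar{\LLl}^2_2(\nu)$, and the strong continuity of the map $(s,t)\mapsto T_{s,t}$. The axioms $T_{s,s}=\Id$, $T_{s,t}\1=\1$ and $T_{s,t}=T_{s,u}T_{u,t}$ are automatic for any Markov transition function, exactly as recalled before Proposition~\ref{prop22}. The organising idea for the two analytic parts is to control everything for \emph{small} time lags $t-s$ by comparing $(P_{s,t})$ with the translation-invariant family $(Q_{s,t})$ furnished by the domination hypothesis (D)—thereby reducing to Proposition~\ref{propos-parade Beispiel modified L p space is well suited for trans inv MP}—and then to pass to arbitrary $s\le t$ via the evolution property together with the compactness of $[s,t]$.

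\textbf{Boundedness.} Fix $s$ and let $\delta(s)$, $C$ and $(Q_{s,t})$ be as in (D). For $t-s\le\delta(s)$ and $f\in\bar{\LLl}^2_2(\nu)$ one has the pointwise estimate $|T_{s,t}f|\le T_{s,t}|f|\le C\,Q_{s,t}|f|$; since $\|\cdot\|_{2,2}^{\ast}$ is monotone and $Q_{s,t}$ is a bounded operator on $\bar{\LLl}^2_2(\nu)$ by Proposition~\ref{propos-parade Beispiel modified L p space is well suited for trans inv MP}—applicable because $(Q_{s,t})$ is translation invariant with $\|z\|^2$ uniformly integrable, with the proviso there that $\nu\ll\llambda$ (otherwise restrict to the continuous functions in $\bar{\LLl}^2_2(\nu)$)—this gives $\|T_{s,t}f\|_{2,2}^{\ast}\le Cc'\,\|f\|_{2,2}^{\ast}$. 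For general $s\le t$ one covers $[s,t]$ by finitely many intervals on which (D) applies, obtains a partition $s=u_0<\dots<u_n=t$, and uses $T_{s,t}=T_{u_0,u_1}\cdots T_{u_{n-1},u_n}$ to conclude $\|T_{s,t}\|\le\prod_i\|T_{u_{i-1},u_i}\|<\infty$; in particular $T_{s,t}\bigl(\bar{\LLl}^2_2(\nu)\bigr)\subset\bar{\LLl}^2_2(\nu)$ and the operator norms are locally bounded in $(s,t)$, which will be used below.

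\textbf{Strong continuity: the diagonal.} I would next prove $\|T_{s,t}f-f\|_{2,2}^{\ast}\to 0$ as $s\le t$, $s,t\to u$. Since $P_{s,t}(w,\cdot)$ is a probability measure, $T_{s,t}f(w)-f(w)=\int\bigl(f(w+z)-f(w)\bigr)P_{s,t}(w,w+\mathrm dz)$, where $P_{s,t}(w,w+\cdot)$ denotes the law of the increment started from $w$; applying Jensen's inequality and then the domination of this increment law by $C\,Q_{s,t}(0,\cdot)=:C\,\widetilde Q_{s,t}(\cdot)$ (obtained from (D) with $f=\1_{w+A}$ and translation invariance of $Q$) yields
\[
\bigl(\|T_{s,t}f-f\|_{2,2}^{\ast}\bigr)^2\ \le\ C\int_{\R^d}\omega_f(z)^2\,\widetilde Q_{s,t}(\mathrm dz),\qquad \omega_f(z):=\|f(\cdot+z)-f\|_{2,2}^{\ast}.
\]
As in the proof of Proposition~\ref{propos-parade Beispiel modified L p space is well suited for trans inv MP}, $\omega_f$ is continuous with $\omega_f(0)=0$ and $\omega_f(z)\le c_f(1+\|z\|)$ (continuity of $\omega_f$ uses $\nu\ll\llambda$, or the restriction to continuous functions); since $\widetilde Q_{s,t}\Rightarrow\delta_0$ as $s,t\to u$ and $\|z\|^2$ is uniformly integrable w.r.t.\ $(\widetilde Q_{s,t})$, splitting the integral at $\{\|z\|\le\eta\}$ and $\{\|z\|>\eta\}$ shows the right-hand side tends to $0$. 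Granting the diagonal case, continuity of $t\mapsto T_{s,t}f$ for fixed $s$ is routine: $T_{s,t'}f-T_{s,t}f$ equals $T_{s,t}(T_{t,t'}f-f)$ for $t'\downarrow t$ and $T_{s,t'}(T_{t',t}f-f)$ for $t'\uparrow t$, whose norms are at most $\|T_{s,\cdot}\|\cdot\|T_{\cdot,\cdot}f-f\|_{2,2}^{\ast}\to 0$ by local boundedness of the operator norms and the diagonal estimate.

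\textbf{Strong continuity: the past variable, and the main obstacle.} It remains to show $\|T_{s',t}f-T_{s,t}f\|_{2,2}^{\ast}\to 0$ as $s'\to s$, $s'\le t$, and this is where (K) and (C) enter. If $s'<s$, the evolution property gives $T_{s',t}f-T_{s,t}f=(T_{s',s}-\Id)\,T_{s,t}f$, i.e.\ $(T_{s',s}-\Id)$ applied to the \emph{fixed} vector $T_{s,t}f\in\bar{\LLl}^2_2(\nu)$, so the diagonal estimate already covers this side. If $s'>s$ one only obtains $T_{s,t}f-T_{s',t}f=(T_{s,s'}-\Id)\,T_{s',t}f$, in which the vector $T_{s',t}f$ \emph{moves} with $s'$, and the evolution property alone no longer suffices; here one must estimate the difference directly from the kernel representation $T_{s',t}f(w)=\int k_{s',t}(w,w+z)f(w+z)\,\mathrm d\nu(z)$, bounding it by Cauchy--Schwarz through the $L^2(\nu)$-masses $\int|k_{s',t}(w,w+z)|^2\,\mathrm d\nu(z)\le (K_{s',t}^{\ast})^2(1+\|w\|)^2$ controlled by (K), and using the continuity $K_{s',t}^{\ast}\to K_{s,t}^{\ast}$ from (C) to prevent any escape of $L^2(\nu)$-mass of the kernels in the limit, so that $T_{s',t}f\to T_{s,t}f$ and the argument closes. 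This kernel estimate on the \emph{weighted} space $\bar{\LLl}^2_2(\nu)$—rather than any of the algebraic reductions—is the step I expect to require the most care. Once continuity in each of $s$ and $t$ separately is available, joint continuity of $(s,t)\mapsto T_{s,t}f$ on $\{0\le s\le t\}$ follows by the triangle inequality, and $T$ is a strongly continuous evolution system on $\bar{\LLl}^2_2(\nu)$.
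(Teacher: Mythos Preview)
Your overall structure is sound, but you allocate the hypotheses (K), (C), (D) differently from the paper, and this difference produces one genuine gap in the boundedness step.

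The paper does \emph{not} use (D) for boundedness. It applies Cauchy--Schwarz directly to the kernel representation $T_{s,t}f(x+y)=\int k_{s,t}(x+y,x+y+z)f(x+y+z)\,d\nu(z)$ and, after a short weighted estimate using that $\nu$ has finite second moment, arrives at
\[
\|T_{s,t}f\|_{2,2}^\ast \;\le\; \Bigl(\int (1+\|x\|)^2\,d\nu(x)\Bigr)^{1/2}\, K_{s,t}^\ast\,\|f\|_{2,2}^\ast,
\]
so (K) alone yields $T_{s,t}f\in\bar\LLl_2^2(\nu)$ and an operator bound for \emph{every} pair $s\le t$ in one stroke. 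Your detour through (D) and the evolution property has a real problem at the chaining step: condition (D) supplies, for each starting point $u$, a $\delta(u)>0$, but gives no lower bound on $\delta(\cdot)$ over $[s,t]$ and no regularity of $u\mapsto\delta(u)$, so there is no reason a finite partition $s=u_0<\dots<u_n=t$ with $u_{i+1}-u_i\le\delta(u_i)$ should exist (take e.g.\ $\delta(u)=(t-u)/2$ for $u<t$, $\delta(t)>0$, and the greedy chain never reaches $t$). The fix is simply to adopt the paper's (K)-based bound, which you need anyway for the $s'\downarrow s$ case.

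For strong continuity the two routes are closer. The paper treats all off-diagonal points $s<t$ uniformly via the same Cauchy--Schwarz kernel estimate combined with (C), and invokes (D) through Proposition~\ref{propos-parade Beispiel modified L p space is well suited for trans inv MP} only for the diagonal $(s,s)$. Your decomposition---reducing $t'\to t$ and $s'\uparrow s$ to the diagonal via the evolution property and local operator-norm bounds, and reserving (K)+(C) for $s'\downarrow s$ alone---is a correct and arguably more transparent use of the evolution structure, and your detailed diagonal argument via (D) matches the paper's. What the paper's route buys is economy: one kernel inequality handles boundedness and all off-diagonal continuity simultaneously, whereas your route still needs exactly that inequality for the $s'\downarrow s$ direction, so nothing is saved by avoiding it elsewhere.
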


\begin{proof}
For $f\in \bar{\LLl}_2^2(\nu)$ and $x,y\in\R^d$ holds that
\begin{align*}
\lefteqn{|T_{s,t} f(x+y)| }\quad \\
& \le \bigg| \int k_{s,t} (x+y,x+y+z) f(x+y+z) \nu(dz)\bigg| \\
&\le \bigg( \int| k_{s,t} (x+y, x+y+z)|^2 d\nu(z)\bigg)^{1/2} 
\bigg( \int f^2(x+y+z) d\nu(z)\bigg)^{1/2}.
\end{align*}
This implies that 
\begin{align*}
 A_{s,t} &:= \sup_y \frac{1}{1+\|y\|} \bigg( \int (T_{s,t} f(x+y))^2 d\nu(x)\bigg)^{1/2} \\*
&\le \sup_y \frac{1}{1+\|y\|} \biggl(\int\biggl(\int k_{s,t}^2 (x+y,x+y+z) d\nu(z) \\*
& \hspace*{22ex} \int f^2(x+y+z) d\nu(z) \biggr) d\nu(x)\biggr)^{1/2}.
\end{align*}
We also obtain
\begin{align*}
\lefteqn{\frac{1}{1+\|y\|} \int f^2(x+y+z) d\nu(z) }\quad \\
& = \frac{1+\|x+y\|}{1+\|y\|} \frac{1}{1+\|x+y\|} \int f^2(x+y+z)d\nu(z) \\
& \le (1+\|x\|) \|f\|_{2,2}^*
\end{align*}
and
\begin{align*}
\sup_y \frac{1}{1+\|y\|} \int k_{s,t}^2 (x+y,x+y+z) d\nu(z) &\le (1+ \|x\|) K_{s,t}^*.
\end{align*}
As consequence we get
\[
A_{s,t} \le \bigg(\int (1+\|x\|)^2 d\nu(x)\bigg)^{1/2} \|f\|_{2,2}^* K_{s,t}^*.
\]
Thus by assumption (K) $T_{s,t} f\in\bar\LLl_2^2(\nu)$ for $f\in\bar\LLl_2^2(\nu)$ and by the continuity assumption (C) $T_{s,t}$ is strongly continuous in $(s,t)$ for $s<t$. By the domination assumption (D) we obtain from Proposition \ref{propos-parade Beispiel modified L p space is well suited for trans inv MP} that $T_{s,t}$ is also continuous in $(s,s)$, $0\le s$.
\end{proof}

Several modifications of the domination assumption ensuring continuity in $(s,s)$ could be given. \vspace{2mm}

\noindent\textbf{Example: PII and their infinitesimal generators}
\label{rem-infini gen of time inhomo MP} \\
Let $L = (L_t)_{t\ge 0}$ be a PII with continuity property \eqref{eq:continuity condition for trans kernel} and with characteristic function given by
 \begin{equation}
\label{eq-ex-characteristic function of PII}
 Ee^{i\langle\xi,L_t\rangle} = e^{\int_0^t\theta_s(i\xi)ds},
 \end{equation}
where for $s\ge 0$ the cumulant function $\theta = (\theta_s)_{s\ge0}$ equals
 \begin{equation}\label{eq-ex-cumulant fucntion of PII}
 \theta_s(i\xi) = -\frac{1}{2}\langle\xi,\sigma_s\xi\rangle + i\langle\xi,b_s\rangle + \int\left(e^{i\langle\xi,y\rangle}-1-i\langle\xi,\chi_{\texttt C}(y)\rangle\right)F_s(dy)
 \end{equation}
for a \emph{cut-off} function $\chi_{\texttt C}$, that is a bounded, measurable real function on $\R^d$ with compact support and which equals the identity in a neighbourhood of zero. 
Here for each $s>0$ the \emph{covariance matrix} $\sigma_s$ is a symmetric, positive semi-definite $d\times d$ matrix, the \emph{drift} $b_s$ is in $\R^d$ and $F_s$ is a L\'{e}vy-measure, i.e.\ a Borel-measure on $\R^d$ which integrates $(1\wedge |x|^2)$ with $F(\{0\})=0$. 

For a triplet $(b_s,\sigma_s,F_s)$ defined as above we consider the operator $G_sf(x) = G^D_sf(x) + G^J_sf(x),\,s\in\R_+$ with 
 \begin{equation}\label{eq-jump term of infini gen}
 \begin{split}
 G^D_sf(x) & := \frac{1}{2}\sum_{j,k=1}^d\sigma_s^{j,k}\frac{\partial^2f}{\partial x_j\partial x_k}(x) + \sum_{j=1}^d b_s^j\frac{\partial f}{\partial x_j}(x), \text{ and}\\ 
 G^J_sf(x) & := \int_{\R^d}\Bigl(f(x+y) - f(x) -\sum_{j = 1}^d\frac{\partial f}{\partial x_j}(x)(\chi_{\texttt C}(y)^j)\Bigr)F_s(dy) \\
 \end{split}
 \end{equation}
on $\bar{\WW}^2(\nu) = \Bigl\{f\in C^2(\R^d) \cap \bar{\LLl}^2_2(\nu) | \frac{\partial f}{\partial x_i}, \frac{\partial^2 f}{\partial x_i \partial x_j} \in \bar{\LLl}^2_2(\nu)\, \forall\, 1\le i,j\le d \Bigr\} \subset \bar{\LLl}^2_2(\nu)$. 

For upcoming results it is crucial to know that $G_sf$ belongs to $\bar{\LLl}^2_2(\nu)$ and whether such an operator can be linked to a strongly continuous $ES$ on $\bar{\LLl}^2_2(\nu)$ corresponding to a time-inhomogeneous Markov process. 
Obviously, for $f\in \bar{\WW}^2(\nu)$ the expression $G^D_sf \in \bar{\LLl}^2_2(\nu)$ for every $s\in \R_+$. 
Moreover, under some mild regularity conditions it can be shown that ${G_s^J}:\bar{\WW}^2(\nu) \to \bar{\LLl}^2_2(\nu)$, where $\bar{\WW}^2(\nu)$ is a subspace of $\bar{\LLl}^2_2(\nu)$ with norm $\|\cdot\|_{2,2}^\ast$.

\begin{lemma}\label{lem-diff operator is OKAY}
 For the operator $G^D_sf$ introduced in equation \eqref{eq-jump term of infini gen} it holds that ${G_s^D|}_{\bar{\WW}^2(\nu)}:\bar{\WW}^2(\nu) \to \bar{\LLl}^2_2(\nu)$. If additionally 
 \[
 \int_0^t \int_{\{|y|\ge 1\}}|y|^2 F_s(dy) ds < \infty,
 \]
then the same holds true for the operator ${G_s|}_{\bar{\WW}^2(\nu)}$.
\end{lemma}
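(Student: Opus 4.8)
The plan is to treat $G^D_s$ and $G^J_s$ separately and to reduce everything to two facts: that $\bar{\LLl}^2_2(\nu)$ is a Banach (hence vector) space by Lemma \ref{lem-modified L p space is a BS}, and the elementary shift inequality that for every $g\in\bar{\LLl}^2_2(\nu)$ and $v\in\R^d$,
\[
 \Bigl(\int_{\R^d}|g(x+v)|^2\,d\nu(x)\Bigr)^{1/2}\le(1+\|v\|)\,\|g\|_{2,2}^\ast ,
\]
which is just the definition of $\|\cdot\|_{2,2}^\ast$ read off at the translation $v$. For the drift term, observe that for fixed $s$ the operator $G^D_s$ is a \emph{finite linear combination} of the partial derivatives $\partial^2 f/\partial x_j\partial x_k$ and $\partial f/\partial x_j$ with the constant coefficients $\tfrac12\sigma_s^{j,k}$ and $b_s^j$; by the definition of $\bar{\WW}^2(\nu)$ all of these derivatives lie in $\bar{\LLl}^2_2(\nu)$, so $G^D_s f\in\bar{\LLl}^2_2(\nu)$, with the quantitative bound $\|G^D_s f\|_{2,2}^\ast\le\tfrac12\sum_{j,k}|\sigma_s^{j,k}|\,\|\partial_j\partial_k f\|_{2,2}^\ast+\sum_j|b_s^j|\,\|\partial_j f\|_{2,2}^\ast$. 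This already proves the first assertion.

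Next I would split the jump integral at a radius $\rho>0$ chosen so that $\chi_{\texttt C}(y)=y$ for $\|y\|<\rho$, writing $G^J_s f=G^{J,\mathrm s}_s f+G^{J,\ell}_s f$ for the parts over $\{\|y\|<\rho\}$ and $\{\|y\|\ge\rho\}$. On the small-jump part the cut-off cancels the first-order term, so Taylor's formula in integral form gives
\[
 f(x+y)-f(x)-\nabla f(x)\cdot y=\int_0^1(1-t)\sum_{j,k}y_jy_k\,\partial_j\partial_k f(x+ty)\,dt ,
\]
whence $|f(x+y)-f(x)-\nabla f(x)\cdot y|\le\|y\|^2\int_0^1\sum_{j,k}|\partial_j\partial_k f(x+ty)|\,dt$. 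Inserting this into $\|G^{J,\mathrm s}_s f\|_{2,2}^\ast$, pulling the $F_s$-integral, the $t$-integral and the finite sum out of the $\LLl^2(\nu)$-norm by Minkowski's integral inequality, and then applying the shift inequality with $v=w+ty$ (using $\|w+ty\|\le\|w\|+\rho$ on the relevant range) bounds the inner norm by $(1+\|w\|+\rho)\sum_{j,k}\|\partial_j\partial_k f\|_{2,2}^\ast$. Dividing by $1+\|w\|$, taking the supremum over $w$, and using $\int_{\{\|y\|<\rho\}}\|y\|^2\,F_s(dy)<\infty$ (valid because $F_s$ is a Lévy measure) shows $G^{J,\mathrm s}_s f\in\bar{\LLl}^2_2(\nu)$, with norm controlled by $\bigl(\int_{\{\|y\|<\rho\}}\|y\|^2 F_s(dy)\bigr)\sum_{j,k}\|\partial_j\partial_k f\|_{2,2}^\ast$.

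For the large-jump part I would decompose pointwise
\[
 G^{J,\ell}_s f(x)=\int_{\{\|y\|\ge\rho\}}f(x+y)\,F_s(dy)-f(x)\,F_s(\{\|y\|\ge\rho\})-\nabla f(x)\cdot\!\int_{\{\|y\|\ge\rho\}}\chi_{\texttt C}(y)\,F_s(dy),
\]
which is legitimate $\nu$-a.e.\ since each of the three terms is finite there. The last two summands are constant multiples of $f$ and of $\partial_j f$, hence lie in $\bar{\LLl}^2_2(\nu)$, the constants being finite because $F_s(\{\|y\|\ge\rho\})<\infty$ and $\chi_{\texttt C}$ is bounded with compact support. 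For the first summand, Minkowski's integral inequality together with the shift inequality yields
\[
 \Bigl\|\int_{\{\|y\|\ge\rho\}}f(\cdot+y)\,F_s(dy)\Bigr\|_{2,2}^\ast\le\|f\|_{2,2}^\ast\int_{\{\|y\|\ge\rho\}}(1+\|y\|)\,F_s(dy),
\]
and the right-hand integral is finite precisely by the added hypothesis: on $\{\|y\|\ge1\}$ one has $\|y\|\le\|y\|^2$ and $\int_{\{\|y\|\ge1\}}\|y\|^2 F_s(dy)<\infty$ (which the stated time-integrated bound delivers for Lebesgue-a.e.\ $s\in[0,t]$), while on $\{\rho\le\|y\|<1\}$ the integrand is bounded and $F_s$ is finite. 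Combining with the drift part gives $G_s f=G^D_s f+G^J_s f\in\bar{\LLl}^2_2(\nu)$, proving the second assertion.

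The genuinely delicate step is the small-jump estimate, where one must justify interchanging three integrations — over $F_s$, over the Taylor parameter $t$, and over $\nu$ — with the $\LLl^2(\nu)$-norm; Minkowski's integral inequality is exactly what licenses this, and the fact that $\|y\|$ stays bounded on the small-jump region is what keeps the nuisance factor $(1+\|w\|)$ coming from the translation-covariance of $\|\cdot\|_{2,2}^\ast$ under control. A minor point worth flagging is that the hypothesis is a bound integrated in $s$, so the conclusion for a fixed generator $G_s$ holds for a.e.\ $s\in[0,t]$ — equivalently, for every $s$ with $\int_{\{\|y\|\ge1\}}\|y\|^2F_s(dy)<\infty$ — which is all that is needed later.
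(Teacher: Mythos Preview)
Your proof is correct and follows essentially the same route as the paper: both split $G^J_s$ into small- and large-jump parts, control the small part via Taylor expansion and the fact that second derivatives lie in $\bar{\LLl}^2_2(\nu)$, and control the large part by decomposing the integrand and invoking the shift estimate together with the second-moment hypothesis on $F_s$. Your use of the integral-form Taylor remainder and Minkowski's inequality is a bit tidier than the paper's Lagrange-remainder argument (which is somewhat informal about bounding $|f''(\xi)|$), and you correctly flag that the time-integrated hypothesis yields the conclusion only for a.e.\ $s$; otherwise the arguments coincide.
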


\begin{proof}
 It remains to show that for $f\in \bar{\WW}^2(\nu)$ the jump part $G^J_sf$ in \eqref{eq-jump term of infini gen} belongs to $\bar{\LLl}^2_2(\nu)$. We only cover the one-dimensional case. 
The multivariate version is similar. Let $f\in \bar{\WW}^2(\nu),\,s\in\R_+$, and choose the cut-off function $\chi_{\texttt C}(y) = y\1_{\{|y| < 1\}}$. Using the Taylor expansion for $x'\in \R$ we arrive at 
 \begin{align*}
 \Bigl|\int_{\R} f(x' +y)- & f(x')-f'(x')y\1_{\{|y|<1\}}F_s(dy)\Bigr|\\
	& \le  \Bigl|\int_{\R} \bigl(f(x'+y)-f(x')-f'(x')y\bigr)\1_{\{|y|<1\}}F_s(dy)\Bigr| \\
			 & \quad {}+ \int_{\{|y|\ge 1\}}|f(x'+y)-f(x')|F_s(dy)\\
	& \le  \underbrace{\int_{\{|y|<1\}} \frac{1}{2}y^2 |f''(\xi)| F_s(dy)}_{=:I_1} 
			 + \underbrace{\int_{\{|y|\ge 1\}}|f(x'+y)|F_s(dy)}_{=:I_2} \\
	& \quad{}+ \underbrace{|f(x')| \int_{\{|y|\ge 1\}} (1 \wedge |y|^2) F_s(dy)}_{=:I_3},
 \end{align*}
where $\xi$ is a suitable intermediate point between $x'$ and $x'+y$ depending on $y$. Thus, the term $|f''(\xi)|$ is bounded on $\{|y|<1\}$ since $f\in C^2(\R^d)$. For $x,z\in\R,$ we choose $x'= x + z$, then, squaring and integrating each term successively w.r.t. $d\nu(x)$. For $I_3$ we then have
 \begin{align*}
 \Bigl(\int_{\{|y|\ge 1\}} (1 \wedge |y|^2) F_s(dy)\Bigr)^2\cdot & \int |f(x + z)|^2 d\nu(x) \\*
		& \le   (1 + |z|)\cdot c_3 \sup_{z'} \frac{1}{1 + |z'|} \Bigl(\int |f(x + z)|^2 d\nu(x)\Bigr) \\
		& \le  (1 + |z|)\cdot c_3 (\|f\|_{2,2}^\ast )^2
 \end{align*}
for a suitable non-negative constant $c_3$. Since
 \[
 \int_0^t \int_{\{|y|\ge 1\}}|y|^2 F_s(dy) ds < \infty,
 \]
for the middle term $I_2$ we observe that
 \begin{align*}
 \lefteqn{\int \int_{\{|y|\ge 1\}} |f(x+z+y)|^2F_s(dy) d\nu(x) }\quad \\ 
& =  \int_{\{|y + z|\ge 1\}} (1 + |y+ z|) \Bigl(\frac{1}{1 + |y+ z|} \int |f(x + y+ z)|^2 d\nu(x) \Bigr)F_s(dy) \\
	 & \le  \int_{\{|y|\ge 1\}} (1 + |z + y|)\cdot \sup_{z'} \frac{1}{1 + |z' + y|} \Bigl(\int |f(x+z + y)|^2 d\nu(x) \Bigr)F_s(dy) \\
	 & \le  (\|f\|_{2,2}^\ast )^2 \Bigl( \int_{\{|y|\ge 1\}} (1 + |z|)(1 + |y|) F_s(dy) \Bigr)\\
	 & \le  (1 + |z|)(\|f\|_{2,2}^\ast )^2 \Bigl( \int_{\{|y|\ge 1\}} (1 \wedge |y|^2) F_s(dy) + \int_{\{|y|\ge 1\}} |y|^2 F_s(dy)\Bigr) \\
	 & \le  (1 + |z|)\cdot c_2 (\|f\|_{2,2}^\ast )^2,
 \end{align*}
where in the first equality Fubini's theorem is used, and again $c_2$ is a suitable non-negative constant. 
Using that $f''\in\overline{\mathcal L}_2^2(\nu)$ we obtain from Taylor expansion that the first term $I_1$ is bounded by a constant $c_1$. In summary this yields
 \begin{align*}
 \int_{\R} & |G^J_sf(x + z)|^2 d\nu(x)\\
	& =  \int_\R \Bigl|\int_{\R} f(x+z+y)-f(x+z)-f'(x+z)y\1_{\{|y|<1\}} F_s(dy)\Bigr|^2d\nu(x) \\
	& \le  (1 + |z|)\cdot(c_1 + c_2 (\|f\|_{2,2}^\ast)^2 +c_3(\|f\|_{2,2}^\ast)^2),
 \end{align*}
which implies
 \[
 \sup_z \frac{1}{1 + |z|} \Bigl(\int_{\R} |G^J_sf(x + z)|^2 d\nu(x)\Bigr) \le (c_1 + c_2 (\|f\|_{2,2}^\ast)^2 +c_3(\|f\|_{2,2}^\ast)^2) <\infty.
 \]
Consequently, we proved $\|G^J_sf\|_{2,2}^\ast < \infty $, hence, the statement holds true.
\end{proof}

\begin{propos} \textbf{\emph{(Infinitesimal generator for PII on \!\boldmath$\bar{\LLl}^2_2(\nu)$\unboldmath)}}
\label{prop-Diff operator is infini gen} \\
Let $(L_t)_{t\ge 0}$ be a PII such that $E|L_t|^2<\infty$ for all $t\in\R_+$. Denote by $T=(T_{s,t})_{s\le t}$ the corresponding transition operator
on $\bar{\LLl}^2_2(\nu)$, and its infinitesimal generator $A_sf$ defined via equation \eqref{eq-infi gen} for $f\in \D(A_s)$. Then, $A_sf = G_sf$ on $\bar{\WW}^2(\nu)$. In particular $\bar{\WW}^2(\nu)\subset \D(A_s)$ for each $s\in\R_+$.
\end{propos}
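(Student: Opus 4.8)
The plan is to compute the right derivative $A_sf=\lim_{h\downarrow0}h^{-1}(T_{s,s+h}f-f)$ directly from the PII structure and to identify it, in the $\|\cdot\|_{2,2}^\ast$-norm of $\bar{\LLl}^2_2(\nu)$, with $G_sf$. Since Lemma~\ref{lem-diff operator is OKAY} already gives $G_sf\in\bar{\LLl}^2_2(\nu)$ for $f\in\bar{\WW}^2(\nu)$ (its extra moment hypothesis being exactly $\int_0^t\int_{\{|y|\ge1\}}|y|^2F_u(dy)\,du<\infty$, which follows from $E|L_t|^2<\infty$), and by Proposition~\ref{propos-parade Beispiel modified L p space is well suited for trans inv MP} the family $T=(T_{s,t})$ is a strongly continuous $ES$ on $\bar{\LLl}^2_2(\nu)$ — its uniform-integrability hypothesis holds here because $E|L_t|^2<\infty$ — so that $h^{-1}(T_{s,s+h}f-f)\in\bar{\LLl}^2_2(\nu)$, it suffices to prove $\|h^{-1}(T_{s,s+h}f-f)-G_sf\|_{2,2}^\ast\to0$ as $h\downarrow0$. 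By translation invariance $T_{s,s+h}f(x)=\int_{\R^d}f(x+y)\,\mu_h(dy)$ with $\mu_h:=P^{L_{s+h}-L_s}$; by \eqref{eq-ex-characteristic function of PII} the law $\mu_h$ is infinitely divisible with characteristic exponent $\int_s^{s+h}\theta_u(i\cdot)\,du$, hence with Lévy–Khintchine triplet $\bigl(\int_s^{s+h}b_u\,du,\ \int_s^{s+h}\sigma_u\,du,\ \int_s^{s+h}F_u(\cdot)\,du\bigr)$ relative to the cut-off $\chi_{\texttt C}$. With the Taylor remainder $R_f(x,y):=f(x+y)-f(x)-\sum_j\partial_jf(x)\,\chi_{\texttt C}^j(y)$ one has
\[
\frac1h\bigl(T_{s,s+h}f-f\bigr)(x)=\sum_j\partial_jf(x)\,\frac1h\!\int\!\chi_{\texttt C}^j\,d\mu_h+\frac1h\!\int R_f(x,y)\,\mu_h(dy),
\]
while $G_sf(x)=\sum_j b_s^j\partial_jf(x)+\tfrac12\sum_{j,k}\sigma_s^{jk}\partial_{jk}f(x)+\int R_f(x,y)\,F_s(dy)$.

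First I would establish pointwise convergence $h^{-1}(T_{s,s+h}f-f)(x)\to G_sf(x)$ for every $x\in\R^d$; this is the classical computation of the infinitesimal generator of a PII (cf.\ \citet{Sato-1999} for the homogeneous analogue). The key inputs are: right-continuity of $u\mapsto\theta_u$ at $s$, which gives $h^{-1}\int_s^{s+h}\theta_u(i\cdot)\,du\to\theta_s(i\cdot)$; and the uniform integrability of $\|z\|^2$ w.r.t.\ $h^{-1}\mu_h$ for $h\in(0,1]$, which follows from $E|L_{s+h}-L_s|^2=O(h)$. From these one gets the scalar limits $h^{-1}\!\int\chi_{\texttt C}^j\,d\mu_h\to b_s^j$, $\ h^{-1}\!\int_{\{\|y\|<\delta\}}y_jy_k\,\mu_h(dy)\to\sigma_s^{jk}+\int_{\{\|y\|<\delta\}}y_jy_k\,F_s(dy)$ for a.e.\ $\delta>0$, and $h^{-1}\!\int g\,d\mu_h\to\int g\,dF_s$ for every bounded continuous $g$ vanishing near the origin. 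Inserting these into the decomposition above, splitting $R_f$ at a radius $\delta$, using the modulus of continuity of the Hessian $\nabla^2f$ on $B(x,\delta)$ for the small-jump part, and letting $\delta\downarrow0$ after $h\downarrow0$, yields the pointwise limit $G_s^Df(x)+G_s^Jf(x)=G_sf(x)$.

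Then I would upgrade pointwise convergence to convergence in $\|\cdot\|_{2,2}^\ast$. Write
\[
\tfrac1h(T_{s,s+h}f-f)-G_sf=\sum_j\partial_jf\cdot\Bigl(\tfrac1h\!\int\chi_{\texttt C}^j\,d\mu_h-b_s^j\Bigr)+E_h,
\]
where $E_h(x)=\tfrac1h\int R_f(x,y)\,\mu_h(dy)-\int R_f(x,y)\,F_s(dy)-\tfrac12\sum_{j,k}\sigma_s^{jk}\partial_{jk}f(x)$. The first summand has $\|\cdot\|_{2,2}^\ast$-norm equal to $\bigl|\tfrac1h\!\int\chi_{\texttt C}\,d\mu_h-b_s\bigr|\cdot\|\nabla f\|_{2,2}^\ast\to0$. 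For $E_h$ one repeats the Taylor/Fubini estimates of the proof of Lemma~\ref{lem-diff operator is OKAY}, but with the finite measure $h^{-1}\mu_h$ in place of $F_s$: those estimates use only $\int(\|y\|^2\wedge1)\,(h^{-1}\mu_h)(dy)$ and $\int_{\{\|y\|\ge1\}}\|y\|^2\,(h^{-1}\mu_h)(dy)$, which are bounded uniformly for $h\in(0,1]$, and they produce both a bound of the same shape $\int_{\R^d}|E_h(x+z)|^2\,d\nu(x)\le C(1+\|z\|)$ with $C$ independent of $h$, and a fixed majorant $|E_h(x)|\le\Phi(x)$ with $\Phi\in\bar{\LLl}^2_2(\nu)$. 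Hence for $\|z\|\ge W$ the term $(1+\|z\|)^{-2}\int|E_h(x+z)|^2\,d\nu(x)\le C/W$ is small uniformly in $h$, while for $\|z\|\le W$ it tends to $0$ as $h\downarrow0$ by the pointwise convergence, the uniform domination, and the finiteness (tightness) of $\nu$. Therefore $\|E_h\|_{2,2}^\ast\to0$, which gives $A_sf=G_sf$ on $\bar{\WW}^2(\nu)$ and in particular $\bar{\WW}^2(\nu)\subset\D(A_s)$.

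The main obstacle is this last step. The norm $\|\cdot\|_{2,2}^\ast$ is a supremum over the shift parameter $z$, and pointwise convergence $E_h\to0$ together with a fixed $\bar{\LLl}^2_2(\nu)$-majorant does not by itself imply $\|E_h\|_{2,2}^\ast\to0$ (a spike travelling off to infinity can persist). What rescues the argument is that the estimates of Lemma~\ref{lem-diff operator is OKAY} are one power stronger than mere membership in $\bar{\LLl}^2_2(\nu)$: they bound $z\mapsto\int|E_h(x+z)|^2\,d\nu(x)$ by $C(1+\|z\|)$, i.e.\ one power below the $(1+\|z\|)^2$ inside the squared norm, so the large-shift tail is automatically negligible uniformly in $h$, and only a bounded range of shifts is left, on which the finiteness of $\nu$ lets ordinary dominated convergence act. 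The delicate points are thus the uniformity in $h$ of these Lemma-type estimates and the clean treatment of the bounded-shift part (e.g.\ showing $\sup_{\|z\|\le W}\int_{\{\|x\|>M\}}\Phi(x+z)^2\,d\nu(x)\to0$ as $M\to\infty$ via tightness of $\nu$ and the $O(1+\|\cdot\|)$-growth of $\Phi$); by contrast the scalar limits of the first step — in particular getting past the discontinuity of $y\mapsto y\1_{\{|y|<1\}}$ on $\{\|y\|=1\}$ — are routine once uniform integrability of $\|z\|^2$ is available.
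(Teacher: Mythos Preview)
Your approach is correct but genuinely different from the paper's. The paper does not compute the limit $h^{-1}(T_{s,s+h}f-f)$ directly for $f\in\bar{\WW}^2(\nu)$; instead it approximates $f$ by a sequence $f_N\in C^2_c(\R^d)$ with $\|f_N-f\|_{2,2}^\ast\to0$ (and the same for first and second derivatives), invokes the off-the-shelf identity $A_sg=G_sg$ for $g\in C^2_c(\R^d)$ from the standard literature, uses Lemma~\ref{lem-diff operator is OKAY} to see that $G_sf_N\to G_sf$ in $\bar{\LLl}^2_2(\nu)$, and then passes to the limit---implicitly relying on the closedness of the generator $A_s$ to conclude $f\in\D(A_s)$ with $A_sf=G_sf$.

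The trade-off is this. The paper's argument is short and modular: it reduces everything to the classical $C^2_c$-case and a closure step, but it leans on two facts that are not spelled out in the text (closedness of $A_s$, and the existence of a mollifier-type approximation $f_N\to f$ in $\bar{\WW}^2(\nu)$, i.e.\ simultaneously in $\|\cdot\|_{2,2}^\ast$ for $f$, $\nabla f$, $\nabla^2 f$). Your route is self-contained and bypasses both of these: no density statement in $\bar{\WW}^2(\nu)$ is needed, and no closedness is invoked, since you exhibit the norm limit directly. The cost is a longer argument and the need to carry the Lemma~\ref{lem-diff operator is OKAY} estimates uniformly in $h$ for the measures $h^{-1}\mu_h$; your observation that those estimates already yield the bound $\int|E_h(x+z)|^2\,d\nu(x)\le C(1+\|z\|)$---one power below the $(1+\|z\|)^2$ in the squared norm---is exactly what makes the large-shift tail harmless and reduces the problem to a bounded range of shifts where finiteness of $\nu$ and dominated convergence finish the job.
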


\begin{proof}
 From Proposition \ref{propos-parade Beispiel modified L p space is well suited for trans inv MP} we know that ${T_{s,t}|}_{\bar{\LLl}^2_{c,2}(\nu)}$ is a strongly continuous ES on the Banach space $\bar{\LLl}^2_2(\nu)$. Hence, the limit
 $A_sf=\lim_{h\to 0}\frac{1}{h}(T_{s,s + h}f - f) $ can be understood in the strong sense on $\D(A_s)$. Thus, $A_s:\D(A_s) \to \bar{\LLl}^2_2(\nu)$. 
Now, let $f\in \bar{\WW}^2(\nu)$ and consider a smooth approximation sequence\footnote{Utilize standard approximation by a mollifier $\varphi$ which is smooth up to the boundary.} $f_N\in C^2_c(\R^d),N\in \N$ such that $\partial f_N,\partial^2 f_N \in C^2_\text{b}(\R^d)$ with $\|f_N - f\|_{2,2}^\ast \to 0$. Recall that $\nu$ integrates $(1 + \|x\|^2)$, thus $f_N\in \bar{\LLl}^2_2(\nu)$ as well as $\partial f_N,\partial^2 f_N \in \bar{\LLl}^2_2(\nu)$, or equivalently, $f_N\in\bar{\WW}^2(\nu)$.
From standard literature\footnote{In \cite{Dynkin-1965} the infinitesimal generator is given for $g\in C_c^2(\R^d)$ for time-inhomogeneous L\'{e}vy processes and in \cite{Jacob-2001} for $g\in C^2_{\text b}(\R^d)$ for Markov processes.}, e.g. \cite{Dynkin-1965} and \cite{Jacob-2001}, we know that $A_sg = G_sg$ for all $g\in C^2_c(\R^d)$, where $G_s$ is the operator as defined in equation \eqref{eq-jump term of infini gen}. 
Consequently, we obtain $A_sf_N = G_sf_N$. The moment condition on $(L_t)_{t\geq 0}$ yields
\[
 \int_0^t \int_{\{|y|\ge 1\}}|y|^2 F_s(dy) ds < \infty,
\]
Lemma \ref{lem-diff operator is OKAY} then implies
 \[
 \|A_sf_N\|_{2,2}^\ast = \|G_sf_N\|_{2,2}^\ast \le \|G^D_sf_N\|_{2,2}^\ast + \|G^J_sf_N\|_{2,2}^\ast < \infty.
 \]
Since $G_sf_N$ and $G_sf \in \bar{\LLl}^2_2(\nu)$ for all $N\in\N$ it follows by majorization
 \[
 A_sf:=\lim_{N\to \infty} A_sf_N = \lim_{N\to \infty} G_sf_N = G_s(\lim_{N\to \infty} f_N) = G_sf .
 \]
In particular, it holds that $\|A_sf\|_{2,2}^\ast < \infty$ and, thus, $\bar{\WW}^2(\nu) \subset \D(A_s)$ for each $s\in \R_+$.
\end{proof}

Similar results as Lemma \ref{lem-diff operator is OKAY} and Proposition \ref{prop-Diff operator is infini gen} hold true under some mild regularity conditions, also for time-inhomogeneous pure diffusion processes.

\setcounter{equation}{0}
\section{Comparison of Markov processes}\label{sec-Comp of MP}
The main goal is to prove a general comparison result for time-inhomogeneous Markov process. Hereto, we need a representation result for strongly continuous ES on general Banach spaces. We start by recalling the weak evolution problem.

\subsection{Representation result for solutions of the weak evolution problem}\label{subsec-weak evo problem}
The \emph{weak evolution problem} is crucial for the comparison result in the next section. Let $\B$ be a Banach space. For a strongly continuous evolution system $T=(T_{s,t})_{s\le t}$ on $\B$ with corresponding family of infinitesimal generators $(A_s)_{s>0}$ define $F_t(s):=T_{s,t}f,\,s\le t$, $f\in\D(A_s)$. Then, by Lemma \ref{lem-basic lemma for evo system} for fixed $t$, $F_t$ is a solution of the homogeneous \emph{weak evolution equation}
\begin{align}
 \frac{d^+u(s)}{ds} & = -A_su(s)\text{ for }s< t,\label{eq-evolution equation for my solution op}\\
 u(t) & = f. \notag
\end{align}
As usual, $F_t$ is called the fundamental solution of \eqref{eq-evolution equation for my solution op}. In the autonomous case where $A_s$ is independent of $s$, equation \eqref{eq-evolution equation for my solution op} becomes the homogeneous Cauchy problem. 

Fundamental solutions and the Cauchy problem are studied in \cite{Friedman-1969} imposing conditions on the resolvent of the generator. In order to deal with inhomogeneous Markov processes we need to study solutions of the evolution problem in the following form which is related to \citet[Part 2, Chapter 3]{Friedman-1969}  (see also \citet[Chapter 5]{Pazy-1983}). This extension is the foundation of the comparison results in the next section.

For every $r\in[s,t]$, $\,s,t\in\R_+$ let $A_r:\D(A_r)\subset\B\to\B$ be a linear operator on $\B$ and let $G(r)$ be a $\B$-valued function on $[s,t]$. We consider for a $\B$-valued function $u$ on $[s,t]$, which is right differentiable on $(s,t),\,u(r)\in\D(A_r)$ for $s < r \le t$, the \emph{weak initial value problem} or \emph{weak evolution problem}
\begin{equation}\label{eq-initial value problem general}
\begin{split}
 \frac{d^+u(r)}{dr}  &=  -A_ru(r) + G(r)\text{ for }s< r \le t,\\
 u(t)  &=  f.
\end{split}
\end{equation}
A $\B$-valued function $u:[s,t]\to\B$ is a \emph{classical solution} of \eqref{eq-initial value problem general} if $u$ solves \eqref{eq-initial value problem general} and is continuous on $[s,t]$.

Note that the autonomous case of the weak evolution problem is a weakening of the inhomogeneous Cauchy problem. The basic representation result for solutions of the inhomogeneous Cauchy equation in a Banach space was used in \cite{Ruesch-Wolf-2010} to establish comparison results for homogeneous Markov processes. 
The following is an extension of this representation result to the weak evolution problem.

\begin{theorem}\textbf{\emph{(Representation result)}}\label{theo-evolution problem} \\
Let $T=(T_{s,t})_{s\le t}$ be a strongly continuous ES on $\B$ with corresponding family of infinitesimal generators $(A_s)_{s\ge0}$. Further, let $F_t,\,G:[0,t]\to\B$ for $t\in\R_+$ be functions such that
 \begin{enumerate}
 \item the map $r\mapsto T_{s,r}G(r)$ is right continuous.\label{enum-theo-right continuous}
 \item $\int_s^t T_{s,r}G(r)dr$ exists for all $s,t\in\R_+$ with $s\le t$.\label{enum-theo-integral exists}
 \item $F_t$ solves the weak evolution problem \eqref{eq-initial value problem general}, i.e. for $s,t\in\R_+$ such that $s\le t$, it holds that $r\mapsto F_t(r)$ is continuous on $[s,t]$, right differentiable on $(s,t)$, $F_t(r)\in\D(A_r)$ on $(s,t]$ and
 \begin{equation}
 \label{eq-theo-backward equation for F_t}\frac{d^+F_t(s)}{ds}= -A_sF_t(s) + G(s)\text{ for } s\le t.
 \end{equation}
 \end{enumerate}
Then,
\begin{equation}\label{eq-theo-solution to the evolution problem}
 F_t(s) = T_{s,t}F_t(t) - \int_s^tT_{s,r}G(r)dr.
\end{equation}
\end{theorem}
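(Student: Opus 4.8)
The plan is to fix $s\le t$ and study the $\B$-valued function $r\mapsto \varphi(r):=T_{s,r}F_t(r)$ on $[s,t]$, showing that it is right differentiable on $(s,t)$ with $\tfrac{d^+}{dr}\varphi(r)=T_{s,r}G(r)$, so that integrating this identity from $s$ to $t$ yields $\varphi(t)-\varphi(s)=\int_s^t T_{s,r}G(r)\,dr$; since $\varphi(t)=T_{s,t}F_t(t)$ and $\varphi(s)=T_{s,s}F_t(s)=F_t(s)$ by condition (1) of the definition of an ES, this is precisely \eqref{eq-theo-solution to the evolution problem} after rearranging.

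First I would compute the right derivative of $\varphi$ by the usual product-rule splitting of the difference quotient:
\begin{align*}
\frac{\varphi(r+h)-\varphi(r)}{h}
&= T_{s,r+h}\,\frac{F_t(r+h)-F_t(r)}{h} + \frac{T_{s,r+h}-T_{s,r}}{h}\,F_t(r).
\end{align*}
For the first summand, one uses that $F_t$ solves \eqref{eq-theo-backward equation for F_t}, so $\tfrac{1}{h}(F_t(r+h)-F_t(r))\to -A_rF_t(r)+G(r)$, together with strong continuity and uniform boundedness of $T_{s,r+h}$ on small $h$ to pass the operator through the limit, giving $T_{s,r}(-A_rF_t(r)+G(r))$. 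For the second summand, I would use the evolution property $T_{s,r+h}=T_{s,r}T_{r,r+h}$ to write $\tfrac1h(T_{s,r+h}-T_{s,r})F_t(r)=T_{s,r}\cdot\tfrac1h(T_{r,r+h}-\Id)F_t(r)$, and since $F_t(r)\in\D(A_r)$ (condition (3)) this converges to $T_{s,r}A_rF_t(r)$. Adding the two limits, the $\pm T_{s,r}A_rF_t(r)$ terms cancel and we are left with $\tfrac{d^+}{dr}\varphi(r)=T_{s,r}G(r)$, as claimed.

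The main obstacle is justifying the interchange of limit and operator in the first summand, i.e.\ that $T_{s,r+h}w_h\to T_{s,r}w$ when $w_h\to w$ in $\B$ and $h\downarrow 0$; this requires the bound $\|T_{s,r+h}(w_h-w)\|\le \sup_{0\le h\le h_0}\|T_{s,r+h}\|\,\|w_h-w\|$ with the operator norms locally bounded (which holds for a strongly continuous ES on $[s,t]$, e.g.\ by the uniform boundedness principle applied along the compact parameter set) plus strong continuity $T_{s,r+h}w\to T_{s,r}w$. Secondly, to pass from "the right derivative of $\varphi$ equals $T_{s,r}G(r)$ on $(s,t)$" to the integral identity, I would invoke the standard fact that a continuous $\B$-valued function on $[s,t]$ whose right derivative exists and is right continuous (here $r\mapsto T_{s,r}G(r)$ is right continuous by condition \eqref{enum-theo-right continuous}, and $r\mapsto\varphi(r)$ is continuous since both $r\mapsto T_{s,r}$ is strongly continuous and $r\mapsto F_t(r)$ is continuous by condition (3)) is the Riemann integral of that right derivative — a Banach-space version of the fundamental theorem of calculus, which also uses that the integral in \eqref{enum-theo-integral exists} exists. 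This is where the hypotheses \eqref{enum-theo-right continuous} and \eqref{enum-theo-integral exists} are consumed, and handling the endpoints $r=s$ and $r=t$ carefully (the derivative is only asserted on the open interval, but continuity up to the closed interval rescues the identity) is the last technical point.
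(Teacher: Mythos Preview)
Your proposal is correct and follows essentially the same route as the paper: define $\varphi(r)=T_{s,r}F_t(r)$, split the difference quotient exactly as you do, use $F_t(r)\in\D(A_r)$ together with the evolution property (which is the paper's forward equation, Lemma~\ref{lem-basic lemma for evo system}(3)) to obtain $\frac{d^+}{dr}\varphi(r)=T_{s,r}G(r)$, and then integrate using assumptions \ref{enum-theo-right continuous} and \ref{enum-theo-integral exists}. If anything, you are more explicit than the paper about the uniform-boundedness argument needed to pass $T_{s,r+h}$ through the limit and about the Banach-space fundamental theorem of calculus at the end.
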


\begin{proof}
Let $s,t\in\R_+$ with $s< t$. The right derivatives $\frac{d^+}{dr}T_{s,r}\text{ and }\frac{d^+}{dr}F_t(r)$ exist due to the assumption on $T$ and $F_t$. Using \eqref{eq-ex-forward equation} and \eqref{eq-theo-backward equation for F_t} we obtain
\begin{align}
 \frac{d^+}{dr}\Bigl(T_{s,r}F_t(r)\Bigr)
 & = \lim_{h\downarrow 0}T_{s,r+h}\frac{1}{h}\Bigl(F_t(r+h) - F_t(r)\Bigr) + \lim_{h\downarrow 0}\frac{1}{h}\Bigl(T_{s,r+h} - T_{s,r}\Bigr)F_t(r)\notag\\
 & =  T_{s,r} \Bigl(\frac{d^+}{dr}F_t(r)\Bigr) + \Bigl(\frac{d^+}{dr}T_{s,r}\Bigr)F_t(r)\notag\\
 & =  T_{s,r}\Bigl(-A_rF_t(r) + G(r)\Bigr) + T_{s,r}A_rF_t(r)\notag\\
 & =  T_{s,r}G(r)\label{eq-theoproof-differentiating of T before integrating}.
\end{align}
Thus $\frac{d^+}{dr}\bigl(T_{s,r}F_t(r)\bigr)$ is integrable on $[s,t]$ due to assumption \ref{enum-theo-integral exists}. Since it is right continuous we can integrate \eqref{eq-theoproof-differentiating of T before integrating} and obtain
\[
\int_s^t T_{s,r}G(r) dr = \int_s^t\frac{d^+}{dr}\bigl(T_{s,r}F_t(r)\bigr) dr = T_{s,t}F_t(t) - T_{s,s}F_t(s).
\]
Thus, we obtain the representation of the solution of the weak evolution problem
\[
F_t(s) = T_{s,t}F_t(t) - \int_s^t T_{s,r}G(r) dr.
\]
~\\[-8.5ex]
\end{proof}
\message{^^J---^^J---^^J---^^J--- Take care for this end-of-proof, needed for positioning the qed at the correct position within the formula line. ---^^J---^^J---^^J}

\begin{Remark}
\begin{description}
 \item[(a)] Using the theory of pseudo-differential operators, \cite{Boettcher-2008} developed an integral representation result on certain subspaces of $C_0$ for solutions of an evolution problem related to \eqref{eq-initial value problem general}. See also our Section \ref{sec-general remark}.
 \item[(b)] For a special class of infinitesimal generators of strongly continuous semigroups \citet[Chapter 5, Theorem 4.2]{Pazy-1983}  derived a similar representation formula for evolution systems on Banach spaces.
\end{description}
\end{Remark}

\subsection{General comparison result for Markov processes}\label{subsec-gen comp for MP}
We assume that $X$ and $Y$ are two time-inhomogeneous Markov processes with values in $(\E,\A)$ and let $S=(S_{s,t})_{s\le t}$ and $T=(T_{s,t})_{s\le t}$ denote their strongly continuous evolution systems on some Banach function space $\B$ on $\E$. Let by $A=(A_s)_{s\ge0}$ and $B=(B_s)_{s\ge 0}$ denote the corresponding families of infinitesimal generator of $S$ and $T$ respectively. 
Further, we assume that for each $s>0$
\begin{equation}\label{eq-F is in domains}
 \F\subset\D(A_s)\cap\D(B_s).
\end{equation}

\begin{theorem}[Conditional comparison result]\label{theo-comparison result}
 Assume that
 \begin{enumerate}
 \item $f\in\F$ implies $s\mapsto T_{s,t}f$ is right-differentiable for $0<s<t$,\label{enum-theo-T leaves invariant}
 \item $f\in\F$ implies $S_{s,t}f\in\F$ for $0\le s\le t$,\, \emph{(stochastic monotonicity of $S$)}\label{enum-theo-X is stoch mon}
 \item \raisebox{-.3ex}{%
 \parbox{\linewidth}{%
 \begin{equation}\label{eq-theo-fsord of infini gen}
 A_sf\le B_sf ~[P^{X_0}]\text{ for all }f\in\F\text{ and }s\le t.
 \end{equation}}}
 \end{enumerate}
 Then \begin{equation} S_{s,t}f\le T_{s,t}f ~[P^{X_0}] \text{ for all }f\in\F,\,s\le t.\end{equation}
\end{theorem}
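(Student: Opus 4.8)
The plan is to apply the representation result of Theorem~\ref{theo-evolution problem} to the $\B$-valued function $F_t(s) := T_{s,t}f$ for fixed $f\in\F$ and $t\in\R_+$, with a suitably chosen inhomogeneity $G$. First I would fix $f\in\F$ and $t>0$. By assumption \reff{enum-theo-T leaves invariant} the map $s\mapsto T_{s,t}f$ is right-differentiable on $(0,t)$, so by part~(2) of Lemma~\ref{lem-basic lemma for evo system} the backward equation $\frac{d^+}{ds}T_{s,t}f = -B_s T_{s,t}f$ holds and $T_{s,t}f\in\D(B_s)$ for $0<s<t$. The idea is to rewrite this using the generator $A$ of $S$ instead of $B$: setting
\[
 G(r) := (B_r - A_r)T_{r,t}f,\qquad 0<r\le t,
\]
the function $F_t(s)=T_{s,t}f$ satisfies $\frac{d^+}{ds}F_t(s) = -B_sF_t(s) = -A_sF_t(s) + G(s)$, i.e. the weak evolution problem \eqref{eq-theo-backward equation for F_t} with respect to $A=(A_s)$, with terminal value $F_t(t)=f$. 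Here I use \eqref{eq-F is in domains} to ensure $F_t(s)\in\D(A_s)$.

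Next I would verify the hypotheses \reff{enum-theo-right continuous} and \reff{enum-theo-integral exists} of Theorem~\ref{theo-evolution problem} for this $G$ and the evolution system $S$. The map $r\mapsto S_{s,r}G(r)$ should be right continuous and integrable on $[s,t]$; I expect this to follow from strong continuity of $S$, the continuity of $r\mapsto T_{r,t}f$, and the regularity already built into the fact that $r\mapsto F_t(r)$ solves the weak evolution problem (the integrand is essentially $\frac{d^+}{dr}(S_{s,r}F_t(r))$, which is the object Theorem~\ref{theo-evolution problem} handles). Applying Theorem~\ref{theo-evolution problem} with the evolution system $S$ in place of $T$ then yields
\[
 T_{s,t}f = F_t(s) = S_{s,t}f - \int_s^t S_{s,r}G(r)\,dr = S_{s,t}f - \int_s^t S_{s,r}(B_r - A_r)T_{r,t}f\,dr,
\]
so that
\[
 T_{s,t}f - S_{s,t}f = \int_s^t S_{s,r}\bigl(B_r T_{r,t}f - A_r T_{r,t}f\bigr)\,dr.
\]

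The final step is to show the right-hand side is $\ge 0$ in the $[P^{X_0}]$-a.s.\ sense. For each fixed $r\in[s,t]$, the function $g := T_{r,t}f$ lies in $\F$: this is where stochastic monotonicity of $T$ enters — I would need $f\in\F\Rightarrow T_{r,t}f\in\F$; if only \reff{enum-theo-X is stoch mon} (monotonicity of $S$) is assumed then the argument must instead be arranged so that the inequality is applied after the operator $S_{s,r}$, using that $S_{s,r}$ preserves $\F$ and preserves the order. Concretely, by hypothesis \reff{enum-theo-fsord of infini gen} we have $A_r g \le B_r g$ pointwise (in the appropriate a.s.\ sense) for $g\in\F$, hence $B_r T_{r,t}f - A_r T_{r,t}f \ge 0$; applying the positive operator $S_{s,r}$ — which by \reff{enum-theo-X is stoch mon} maps $\F$ into $\F$ and hence respects $\fsord$, in particular is order-preserving on nonnegative elements — keeps the integrand nonnegative, and integrating over $r\in[s,t]$ gives $T_{s,t}f - S_{s,t}f \ge 0$ $[P^{X_0}]$-a.s., which is the claim. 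The main obstacle I anticipate is the careful handling of the almost-everywhere/$[P^{X_0}]$ qualifications together with the interplay of which evolution system ($S$ or $T$) must be stochastically monotone: one must make sure the intermediate function $T_{r,t}f$ to which the generator inequality \eqref{eq-theo-fsord of infini gen} is applied genuinely belongs to $\F$, and that "positivity" of $S_{s,r}$ in the relevant ordered-Banach-space sense is legitimately invoked — equivalently, that $g\ge 0$ (or $g_1\le g_2$) is an instance of or compatible with the order $\fsord$ so that stochastic monotonicity of $S$ applies. Verifying the integrability and right-continuity conditions \reff{enum-theo-right continuous}, \reff{enum-theo-integral exists} for $G(r)=(B_r-A_r)T_{r,t}f$ is the other technical point, but should be routine given the regularity assumptions on $T$ and $F_t$.
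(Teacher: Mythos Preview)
Your decomposition is the mirror image of the paper's, and under the stated hypotheses it does not go through. You set $F_t(s)=T_{s,t}f$, write it as a weak evolution problem for the generator $A$ of $S$ with inhomogeneity $G(r)=(B_r-A_r)T_{r,t}f$, and then want to apply the representation with the evolution system $S$. This fails at two places. First, to even write $A_rF_t(r)$ you need $T_{r,t}f\in\D(A_r)$; assumption \reff{enum-theo-T leaves invariant} together with Lemma~\ref{lem-basic lemma for evo system} only gives $T_{r,t}f\in\D(B_r)$, and \eqref{eq-F is in domains} gives $\F\subset\D(A_r)$, but nothing says $T_{r,t}f\in\F$. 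Second, and this you explicitly notice, to conclude $(B_r-A_r)T_{r,t}f\ge 0$ from \eqref{eq-theo-fsord of infini gen} you again need $T_{r,t}f\in\F$, i.e.\ stochastic monotonicity of $T$, which is not assumed. Your suggested repair (``apply the inequality after $S_{s,r}$'') is not a fix: the generator inequality is a pointwise statement for arguments in $\F$ and cannot be moved past $S_{s,r}$.

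The paper resolves this by interchanging the roles of $S$ and $T$ in your setup: take $F_t(s):=T_{s,t}f-S_{s,t}f$, observe $\frac{d^+}{ds}F_t(s)=-B_sT_{s,t}f+A_sS_{s,t}f$, and add and subtract $B_sS_{s,t}f$ to write
\[
\frac{d^+}{ds}F_t(s) = -B_sF_t(s) + (A_s-B_s)S_{s,t}f =: -B_sF_t(s) + G(s).
\]
Now everything is well defined: $B_sT_{s,t}f$ by assumption \reff{enum-theo-T leaves invariant} and Lemma~\ref{lem-basic lemma for evo system}; $S_{s,t}f\in\F\subset\D(A_s)\cap\D(B_s)$ by assumption \reff{enum-theo-X is stoch mon} and \eqref{eq-F is in domains}; and $G(s)\le 0$ follows directly from \eqref{eq-theo-fsord of infini gen} applied to $S_{s,t}f\in\F$. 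The representation result (Theorem~\ref{theo-evolution problem}) is then invoked with the evolution system $T$ and generator $B$, giving $F_t(s)=\int_s^t T_{s,r}(-G(r))\,dr\ge 0$, where the last step uses only that $T_{s,r}$, being a transition operator, preserves nonnegativity --- no stochastic monotonicity of $T$ is needed.
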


\begin{proof}
 Define for $f\in\F$ and $t\in\R_+$ the function $F_t:[0,t]\to\B$ by $F_t(s):= T_{s,t}f - S_{s,t}f$. Then due to Lemma \ref{lem-basic lemma for evo system} $F_t$ satisfies the differential equation
\[
\frac{d^+F_t(s)}{ds} = -B_sT_{s,t}f + A_sS_{s,t}f,\,F_t(t) = 0,\text{ on $[0,t]$}.
\]
As consequence we obtain
\begin{align*}
 \frac{d^+F_t(s)}{ds} & =  -B_s(T_{s,t}f - S_{s,t}f)+ \Bigl(A_s - B_s\Bigr)S_{s,t}f= -B_sF_t(s) + G(s),
\end{align*}
where by \eqref{eq-F is in domains} and assumption \ref{enum-theo-X is stoch mon} $G(s) := \bigl(A_s - B_s\bigr)S_{s,t}f$ is well defined. Further it is non-positive due to assumption \eqref{eq-theo-fsord of infini gen}. Moreover we obtain that $\bigl(-B_sF_t(s)\bigr)$ is also well defined due to assumptions \ref{enum-theo-T leaves invariant}, Lemma 2.1 and \ref{enum-theo-X is stoch mon}. 
The continuity of $S$ and $T$ transfers to $F_t(s)$. Furthermore, the strong continuity of $T_{s,t}$ and $S_{s,t}$ transfers to the map $r \mapsto T_{s,r}G(r)$, hence, it is right continuous, too. Thus, $F_t(s)$ solves the weak evolution problem. 
Since $-G(r)\ge0$ for $s\le r \le t$ it follows that $-\int_s^tT_{s,r}G(r)dr$ exists and is finite for all $s,t\in\R_+$. 
Thus, the assumptions of Theorem \ref{theo-evolution problem} are satisfied and imply that the solution $F_t(s)$ has an integral representation of the form
 \[
 F_t(s) = T_{s,t}F_t(t) - \int_s^t T_{s,r}G(r) dr
	 = \int_s^t T_{s,r}\bigl(-G(r)\bigr)dr\ge 0,
 \]
as $F_t(t) = 0$ and $-G(r)\ge 0$ for all $s\le t$.
\end{proof}
\begin{Remark}
\begin{description}
 \item[(a)] 
The comparison result for homogeneous Markov processes in \citet[Theorem 3.1]{Ruesch-Wolf-2010} is implied by Theorem \ref{theo-comparison result} since in the case of homogeneous Markov processes the corresponding semigroups fulfil condition \ref{enum-theo-T leaves invariant} (see for example Theorem 2.4 in \cite{Pazy-1983}).\label{rem-evolution problem implies Cauchy problem}
 \item[(b)] 
Let $\F$ be any function class in \eqref{eq-fst def}--\eqref{eq-mixes def}. If the Markov process $X$ on $\R^d$ has a translation invariant transition function $(P_{s,t})_{s\le t}$, then $X$ is stochastically monotone w.r.t.\ to $\F$(see Lemma 2.8 in \cite{Bergen-Ruesch-2007}) and thus condition \ref{enum-theo-X is stoch mon} in Theorem \ref{theo-comparison result} holds. \label{rem-translation invariant trans function}
\item[(c)] 
\textbf{\emph{(Smooth operator)}} Let $A_s$ be an operator as in Proposition \ref{prop-Diff operator is infini gen}, then, for transition operators of PII on $\bar{\LLl}^2_{\text c,2}(\nu)$ resp. $\bar{\LLl}^2_{2}(\nu)$ the condition \ref{enum-theo-T leaves invariant} reduces to show that $T$ is a \emph{smooth operator}. 
For instance, let $f\in \F^0 = \F \cap \bar{\WW}^2(\nu)$ and let the assumptions of Proposition \ref{propos-parade Beispiel modified L p space is well suited for trans inv MP} hold true. Then, for a smooth operator it holds that $T_{s,t}f\in C^2(\R^d)$. Further, due to Proposition \ref{propos-parade Beispiel modified L p space is well suited for trans inv MP}, it holds that $T_{s,t}f \in \bar{\LLl}^2_{2}(\nu)$. 
Since, $\partial f,\, \partial^2 f \in \bar{\LLl}^2_{2}(\nu)$ we also have $ T_{s,t}(\partial f)$ and $T_{s,t}(\partial^2 f) \in \bar{\LLl}^2_{2}(\nu)$. Hence, $T_{s,t}f \in \bar{\WW}^2(\nu) \subset \D(A_s)$.
\label{rem-smooth operator}
\end{description}
\end{Remark}

\setcounter{equation}{0}
\section{Applications}\label{sec-application}

\subsection{L\'{e}vy driven diffusion processes on \boldmath$C_{0}(\R^d)$} \label{subsec-Apps and Exs-LDDP}

In the recent financial mathematics literature L\'{e}vy driven diffusion processes have found considerable attention as stochastic interest rate models or stochastic volatility models. For a classical survey see \cite{Heston-1993}. Recent results and developments in this field can be found in \cite{Poulsen-etal-2009} and the references therein.

Let $(L_t)_{t\ge 0}$ be an $\R^d$-valued L\'{e}vy process with local characteristics $(b,\sigma, F)$. We consider the L\'{e}vy driven diffusion processes defined as solutions of the following stochastic differential equation:
\begin{equation}\label{eq-levy driven processes as SDGL}
\begin{split}
 dX_t & =  \Phi(X_{t-},t)dL_t\\
 X_0& =  x,\,x\in\R^d,
\end{split}
\end{equation}
where $\Phi:\R^d\times \R_+\to\R^{d\times d}$ is in $C^{1,1}_{\text{b}}(\R^d\times \R_+)$.

In the present section we will make use of the (probabilistic) symbol. This opens a neat way to derive the structure of the generator of L\'evy driven diffusions and, more general, to the class of Feller evolution processes.

\begin{definition}
Let $X$ be a Markov process with right-continuous paths (a.s) and 
\[
T_{s,t} u(x):= E^{x,s} u(X_t)=E(u(X_t)|X_s=x)
\]
for $s\leq t$.
If $T_{s,t}$ is a strongly continuous ES on the Banach space $C_0$ of continuous functions vanishing at infinity, then $X$ is called a \emph{$C_0$-Feller evolution process}. The process is called \emph{rich}, if
\[
C_c^\infty\subseteq \D(A_s) \text{ for every } s\geq 0.
\]
\end{definition}

Let us mention that in some parts of the literature $C_b$ is used instead of $C_0$. This larger space is then endowed usually with the strict topology (cf. Van Casteren (2011), Section 2.1). Here, it is convenient to use $C_0$ as reference space. 
We will see in the proof of Lemma \ref{sde-symbol} below that in the setting \eqref{eq-levy driven processes as SDGL} we obtain a $C_0$-Feller evolution process for \emph{every} L\'evy process. If we used $C_b$ instead, we would either loose the Banach space structure (using the strict topology which is defined via a family of semi-norms) or we would loose strong continuity of the semigroup in most cases (using the sup-norm). 
The following concept of a probabilistic symbol has proved to be useful in the time-homogeneous case already. We introduce this notion for the time-inhomogeneous case in order to derive path properties of the process. In the present article we will use it to derive the structure of the generator. 

\begin{definition}
Let $X$ be an $\R^d$-valued Markov process, which is conservative and normal, that is $P^x(X_0=x)=1$, and having right-continuous paths (a.s.). Fix a starting point $x$ and a starting time $t\geq 0$ and define $\tau=\tau^x_R$ to be the first exit time from the ball of radius $R > 0$ after time $t$:
 \[
 \tau:=\tau^x_R:=\inf\big\{h\geq 0 : \nnorm{X_{t+h}^x-x} > R \big\}.
 \]
 The function $p:\R_+\times \R^d\times \R^d \rightarrow \C$ given by
 \begin{gather} \label{symbol}
 p(t, x,\xi):=- \lim_{h\downarrow 0}E^{x,t} \left(\frac{e^{i\langle X^\tau_{t+h}-x,\xi \rangle}-1}{h}\right)
 \end{gather}
 is called the (time-dependent) \emph{probabilistic symbol} of the process, if the limit exists for every $t\geq 0$ and $x,\xi\in\R^d$ independently of the choice of $R>0$.
\end{definition}

Let us recall that in the above definition $X^\tau_{t+h}=X_{\min\{t+h,\tau\}}$ denotes the process (which is started at time $t$ in $x$) stopped at time $\tau$. 

At first sight it might be a bit surprising that it is possible to demand that the limit does not depend on the choice of $R$ respectively $\tau$. Intuitively speaking, this is due to the fact that we are dealing with right-continuous processes. The symbol describes only the local dynamics of the process and this is not changed by using the stopping time. 

We need two auxiliary results in order to prove Theorem \ref{theorem:symbol}. The first one is the time-inhomogeneous version of Dynkin's formula. It follows from the well-known fact that $(M_h^{[x,t,u]})_{h\geq 0}$ given by
\[
 M_h^{[x,t,u]}:=u(X_{t+h}, t+h)-u(x,t)-\int_{t}^{t+h} (A_s + \partial_s) u(X_s,s) \,ds
\]
is a martingale for every $u\in \bigcap_{t\leq s \leq t+h} \D(A_s)\cap \D(\partial_s)$ with respect to every $P^{x,t}$, $x\in\R^d$, $t\geq 0$. To our knowledge the most general version of this result can be found in \cite[Theorem 2.11]{Casteren-2011}. There it is formulated for the $C_b$ case but it can be easily adapted to the $C_0$ case. For a function $u$ which depends on the time-inhomogeneous process, but not on time itself, we obtain:

\begin{lemma} \label{lem:dynkin}
Let $X$ be a Feller evolution process ($C_b$ or $C_0$) and $\tau$ a stopping time. Then we have
\begin{gather} \label{dynkin}
 E^{x,t} \Bigl[\int_t^{\tau \wedge (t+h)} A_s u(X_s) \,ds\Bigr] = E^{x,t} \bigl[u(X_{\tau \wedge (t+h)})\bigr]-u(x)
\end{gather}
for all $t>0$ and $u\in \bigcap_{t\leq s \leq t+h}\D(A_s)$.
\end{lemma}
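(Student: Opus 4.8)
The plan is to obtain \eqref{dynkin} by applying the optional stopping theorem to the time-inhomogeneous Dynkin martingale recalled just above, specialised to a function $u$ that does not depend on the time variable. Since such a $u$ satisfies $\partial_s u\equiv 0$, it lies trivially in $\D(\partial_s)$ and hence in $\bigcap_{t\le s\le t+h}\D(A_s)\cap\D(\partial_s)$; the quoted fact then says that
\[
 M_h:=u(X_{t+h})-u(x)-\int_t^{t+h}A_su(X_s)\,ds,\qquad h\ge 0,
\]
is a martingale under $P^{x,t}$ with respect to the filtration generated by the process started in $x$ at time $t$. Because the process is normal we have $X_t=x$ $P^{x,t}$-a.s., so $M_0=0$.

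First I would check that $M$, restricted to the bounded horizon $[0,h]$, is amenable to optional sampling. Its paths are right-continuous, since those of $X$ are and since $r\mapsto\int_t^{t+r}A_su(X_s)\,ds$ is continuous; and $u$ is bounded (it belongs to $C_0$, resp.\ $C_b$) while $s\mapsto\|A_su\|_\infty$ is locally bounded on $[t,t+h]$ under the standing assumptions on the Feller evolution process, so that
\[
 |M_r|\le 2\|u\|_\infty+h\sup_{s\in[t,t+h]}\|A_su\|_\infty\qquad\text{for all }r\in[0,h].
\]
Hence $(M_r)_{r\in[0,h]}$ is bounded, and in particular uniformly integrable. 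Now put $\sigma:=(\tau\wedge(t+h))-t$ (in elapsed-time coordinates), which is a stopping time with $0\le\sigma\le h$. The optional stopping theorem for right-continuous uniformly integrable martingales yields $E^{x,t}[M_\sigma]=E^{x,t}[M_0]=0$, i.e.
\[
 E^{x,t}\bigl[u(X_{\tau\wedge(t+h)})\bigr]-u(x)-E^{x,t}\Bigl[\int_t^{\tau\wedge(t+h)}A_su(X_s)\,ds\Bigr]=0,
\]
and rearranging this gives exactly \eqref{dynkin}.

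The only genuinely delicate point is the local boundedness of $s\mapsto A_su$ on $[t,t+h]$: it is needed both for the integral term in $M_h$ to make sense (so that the Dynkin martingale is well defined at all) and for the bound displayed above that makes optional stopping legitimate. This is where one uses the structure of a Feller evolution process — and, if necessary, the regularity hypotheses implicit in \cite[Theorem 2.11]{Casteren-2011} — rather than mere membership of $u$ in $\bigcap_s\D(A_s)$. Everything else is routine: the reduction $\partial_s u=0$, right-continuity of the paths, the identity $M_0=0$, and the final algebraic rearrangement.
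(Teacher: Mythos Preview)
Your proposal is correct and follows the same route as the paper: specialise the time-inhomogeneous Dynkin martingale (quoted just before the lemma, with reference to \cite[Theorem 2.11]{Casteren-2011}) to a time-independent $u$, then stop it. The paper's own justification is the single sentence after the lemma --- ``we have used that the class of martingales is stable under stopping'' --- whereas you make the optional-sampling step explicit via the boundedness of $M$ on $[0,h]$; since the stopping time $\sigma$ is bounded by $h$, either formulation suffices.
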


In the previous lemma we have used that the class of martingales is stable under stopping. The following result has been established in \citet[Lemma 2.5]{Schilling-Schnurr-2010}.

\begin{lemma} \label{lem:folge}
Let $K\subset \R^d$ be a compact set. Let $\chi:\R^d \to \R$
be a smooth truncation function, i.e., $\chi\in C_c^\infty(\R^d)$
with
\[
 \1_{B_1(0)}(y) \leq \chi (y) \leq\1_{B_2(0)}(y)
\]
for $y\in\R^d$. Furthermore we define $\chi_n^x(y):=\chi((y-x)/n)$ and for $\xi\in\R^d$ $u_n^x(y):= \chi_n^x(y) e^{i \langle y,\xi\rangle}$. Then for sufficiently large $c>0$ we have for all $z\in K$
\[
 \left|u_n^x(z+y)-u_n^x(z)-\langle y,\nabla u_n^x(z)\rangle \1_{B_1(0)}(y)\right|
 \leq c \cdot \left(1 \wedge \abs{y}^2 \right).
\]
\end{lemma}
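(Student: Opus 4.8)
The plan is to record uniform-in-$n$ bounds on the first two derivatives of $u_n^x$ and then split $\R^d$ into the regimes $\abs y\le 1$ and $\abs y>1$. Since $\chi_n^x(y)=\chi((y-x)/n)$ and $\abs{e^{i\langle y,\xi\rangle}}=1$, the product rule gives
\[
\partial_j u_n^x(y)=\frac1n(\partial_j\chi)\big((y-x)/n\big)e^{i\langle y,\xi\rangle}+i\xi_j\,\chi\big((y-x)/n\big)e^{i\langle y,\xi\rangle},
\]
and differentiating once more produces a Hessian $D^2u_n^x(y)$ whose entries are sums of the terms $\frac1{n^2}(\partial_j\partial_k\chi)$, $\frac1n(\partial_j\chi)i\xi_k$, $\frac1n(\partial_k\chi)i\xi_j$ and $-\xi_j\xi_k\chi$, each evaluated at $(y-x)/n$ and multiplied by $e^{i\langle y,\xi\rangle}$. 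Because $\chi\in C_c^\infty(\R^d)$ with $\abs\chi\le 1$ (from $\1_{B_1(0)}\le\chi\le\1_{B_2(0)}$), every one of these factors is bounded, so for every $n\ge 1$ one obtains $\nnorm{u_n^x}_\infty\le 1$, $\nnorm{\nabla u_n^x}_\infty\le c_1:=\nnorm{\nabla\chi}_\infty+\abs\xi$ and $\nnorm{D^2u_n^x}_\infty\le c_2:=\nnorm{D^2\chi}_\infty+2\nnorm{\nabla\chi}_\infty\abs\xi+\abs\xi^2$, where $c_1,c_2$ depend only on $\chi$, $\xi$ (and $d$) and are independent of $n$, $x$ and $z$.

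For $\abs y\le 1$, so that $\1_{B_1(0)}(y)=1$, I would apply Taylor's formula with integral remainder to $t\mapsto u_n^x(z+ty)$ on $[0,1]$ (valid for $\C$-valued $C^2$ functions): $u_n^x(z+y)-u_n^x(z)-\langle y,\nabla u_n^x(z)\rangle=\int_0^1(1-t)\,\langle y,D^2u_n^x(z+ty)y\rangle\,dt$, so the left-hand side of the claimed inequality is at most $\tfrac12\nnorm{D^2u_n^x}_\infty\abs y^2\le \tfrac12 c_2\,(1\wedge\abs y^2)$ (absorbing a dimensional factor into $c_2$ if the entrywise Hessian bound is used). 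For $\abs y>1$ the indicator $\1_{B_1(0)}(y)$ vanishes, so the left-hand side is simply $\abs{u_n^x(z+y)-u_n^x(z)}\le 2\nnorm{u_n^x}_\infty\le 2=2\,(1\wedge\abs y^2)$; if the convention lets $\1_{B_1(0)}$ include $\abs y=1$, that single value is absorbed by bounding the left-hand side there by $2\nnorm{u_n^x}_\infty+\abs y\,\nnorm{\nabla u_n^x}_\infty\le 2+c_1$. Taking $c:=\max\{\tfrac12 c_2,\,2+c_1\}$ then yields the estimate for all $y\in\R^d$ and all $z\in K$.

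No step is genuinely hard here; the two points worth care are (i) confirming that the chain-rule prefactors $1/n$ and $1/n^2$ can only help, so the derivative bounds above are uniform in $n$ — this uniformity, together with the dominating function $c\,(1\wedge\abs y^2)$, is exactly what makes the estimate usable in the dominated-convergence steps of the symbol computation in which Lemma~\ref{lem:folge} is invoked — and (ii) using the integral form of the Taylor remainder rather than a mean-value point, since $u_n^x$ is complex-valued and one wants an explicit constant. The restriction to a compact set $K$ is not needed for the estimate itself (all bounds are uniform over $z\in\R^d$), but is kept because that is the form in which the lemma is applied.
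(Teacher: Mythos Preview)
The paper does not supply its own proof of this lemma; it simply records that the result ``has been established in \cite{Schilling-Schnurr-2010}, Lemma~2.5.'' Your direct argument---uniform-in-$n$ bounds on $u_n^x$, $\nabla u_n^x$, $D^2u_n^x$ from the chain rule, then Taylor with integral remainder for $\abs y\le 1$ and the trivial bound $2\nnorm{u_n^x}_\infty$ for $\abs y>1$---is correct and is the standard way to prove such an estimate. Your side remarks are also on point: the integral-remainder form of Taylor is the right choice for a complex-valued function, the $1/n$ and $1/n^2$ factors only improve the bounds, and the compactness of $K$ plays no role in the estimate itself (all constants are uniform over $z\in\R^d$).
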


Now we are in a position to show that the probabilistic symbol (which is easy to calculate) and the functional analytic symbol (which we will use below) coincide.

\begin{theorem}\label{theorem:symbol}
 Let $X=(X_t)_{t\geq 0}$ be a conservative c\`adl\`ag Feller evolution process such that $C_c^\infty\subset D(A_s)$ $(s\geq 0)$. Then the generator $A_s|_{C_c^\infty}$ is a pseudo-differential operator (cf. Section \ref{sec-general remark} below) with symbol $-q(s, x,\xi)$, that is
\begin{equation}
 A_s f(x)=-(2\pi)^{-d/2}\int_{\R^d}e^{-i\langle x,\xi \rangle} q(s,x,\xi)\hat f(\xi)d\xi 
\end{equation}
where the symbol $q:\R_+\times \R^d \times \R^d \to \C$ has the following properties for fixed $s\geq 0$:
\begin{itemize} \itemsep -0.6ex
 \item $q(s,\cdot,\cdot)$ is locally bounded in $x,\xi$.
 \item $q(s,\cdot,\xi)$ is measurable for every $\xi\in\R^d$.
 \item $q(s,x,\cdot)$ is a continuous negative definite function in the sense of Schoenberg for every $x\in\R^d$.
\end{itemize}
 Let
 \begin{gather} \label{stopping}
 \tau:=\tau^x_R:=\inf\big\{h\geq 0 : \nnorm{X_{s+h}^x-x} > R \big\}.
 \end{gather}
 If $x\mapsto q(s, x,\xi)$ is continuous, then we have
 \begin{equation}\label{eq-prob symbol is equal to symbol of gen}
 \lim_{h\downarrow 0}E^{x,s} \left(\frac{e^{i \langle X^\tau_{s+h}-x,\xi\rangle}-1}{h}\right)= -q(s,x,\xi),
 \end{equation}
 independently of the choice of $R$ respectively $\tau$,
 that is, the probabilistic symbol of the process exists and coincides with the symbol of the generator.
\end{theorem}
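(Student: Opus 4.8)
\emph{Step 1 (structure of the generator).} I begin with the unconditional part. Fix $s\ge 0$. Since $X$ is a conservative Feller evolution process, every $T_{s,t}$ is a positive contraction on $C_0$; hence for $f\in C_c^\infty$ attaining a non-negative maximum at $x_0$ we have $A_sf(x_0)=\lim_{h\downarrow 0}h^{-1}\bigl(T_{s,s+h}f(x_0)-f(x_0)\bigr)\le 0$, because $T_{s,s+h}f(x_0)\le T_{s,s+h}f^{+}(x_0)\le\nnorm{f^{+}}_\infty=f(x_0)$. Thus $A_s|_{C_c^\infty}$ obeys the positive maximum principle, and by Courr\`ege's structure theorem (see e.g.\ \cite{Jacob-2001}) it is a pseudo-differential operator of the asserted form, whose symbol $q(s,x,\cdot)$ is for each $x$ a continuous negative definite function in the sense of Schoenberg, with $q(s,\cdot,\xi)$ measurable and $q(s,\cdot,\cdot)$ locally bounded --- precisely the three listed properties. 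I would pin down $q$ by testing against $e_\xi(y):=e^{i\langle y,\xi\rangle}$ through the cut-offs $u_n^x=\chi_n^x e_\xi\in C_c^\infty$ of Lemma \ref{lem:folge}: applying the integro-differential form of $A_s$ to $u_n^x$, evaluating at $x$ (where $\chi_n^x\equiv 1$ for large $n$) and letting $n\to\infty$ --- the bound of Lemma \ref{lem:folge} and the local boundedness of the L\'evy characteristics of $A_s$ (continuous functionals of $q$, integrating $1\wedge\nnorm{y}^2$) making the limit licit by dominated convergence --- gives $A_s u_n^x(x)\to-q(s,x,\xi)\,e^{i\langle x,\xi\rangle}$.

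\emph{Step 2 (Dynkin reduction).} Now fix $s\ge 0$, $x,\xi\in\R^d$, $R>0$ and the stopping time $\tau=\tau_R^x$ of \eqref{stopping}, and assume $x\mapsto q(s,x,\xi)$ is continuous. Choosing $n>R$, so that $\chi_n^x\equiv 1$ on $\overline{B_R(x)}$ and $u_n^x(x)=e_\xi(x)$, I would split
\[
e^{i\langle X^\tau_{s+h}-x,\xi\rangle}-1=e^{-i\langle x,\xi\rangle}\bigl(u_n^x(X^\tau_{s+h})-u_n^x(x)\bigr)+e^{-i\langle x,\xi\rangle}(e_\xi-u_n^x)(X^\tau_{s+h}),
\]
take $E^{x,s}$, and apply the time-inhomogeneous Dynkin formula (Lemma \ref{lem:dynkin}, valid as $u_n^x\in C_c^\infty\subset\D(A_r)$ for all $r$) to the first summand, turning it into $e^{-i\langle x,\xi\rangle}\tfrac1h E^{x,s}\bigl[\int_s^{\tau\wedge(s+h)}A_r u_n^x(X_r)\,dr\bigr]$; the second summand has modulus at most $\tfrac2h\,P^{x,s}\bigl(\tau\le h,\,\nnorm{X_\tau-x}\ge n\bigr)$, since $e_\xi-u_n^x$ vanishes on $B_n(x)\supset\overline{B_R(x)}$ while $X^\tau_{s+h}\in\overline{B_R(x)}$ on $\{\tau>h\}$.

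\emph{Step 3 (the two limits; removal of $R$).} First let $h\downarrow 0$: because $X_s=x$ and the paths are right-continuous, $\tau>0$ a.s.; on $\{\tau>h\}$ the path remains in the compact set $\overline{B_R(x)}$, where Lemma \ref{lem:folge} and the local boundedness of $q$ give $\sup_{r\in[s,s+1]}\sup_{z\in\overline{B_R(x)}}\abs{A_r u_n^x(z)}<\infty$; using right-continuity of the paths and continuity of $q(s,\cdot,\xi)$ in $x$, the map $r\mapsto A_r u_n^x(X_r)$ is right-continuous at $r=s$, so dominated convergence makes the first summand tend to $e^{-i\langle x,\xi\rangle}A_s u_n^x(x)$. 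Hence, for every $n>R$,
\[
\lim_{h\downarrow 0}E^{x,s}\Bigl(\tfrac{e^{i\langle X^\tau_{s+h}-x,\xi\rangle}-1}{h}\Bigr)=e^{-i\langle x,\xi\rangle}A_s u_n^x(x)+\mathcal{E}_n,
\]
with $\abs{\mathcal{E}_n}\le\limsup_{h\downarrow 0}\tfrac2h\,P^{x,s}(\tau\le h,\,\nnorm{X_\tau-x}\ge n)$. The left-hand side is independent of $n$; letting $n\to\infty$, Step 1 gives $A_s u_n^x(x)\to-q(s,x,\xi)e^{i\langle x,\xi\rangle}$, and $\mathcal{E}_n\to 0$ because on $\{\tau\le h\}$ the path lies in $\overline{B_R(x)}$ up to time $\tau$, so $\nnorm{X_\tau-x}\ge n$ forces a jump of size exceeding $n-R$, whose intensity is a tail of the L\'evy kernel of $A_s$, hence finite and vanishing as $n\to\infty$. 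This yields $\lim_{h\downarrow 0}E^{x,s}\bigl(h^{-1}(e^{i\langle X^\tau_{s+h}-x,\xi\rangle}-1)\bigr)=-q(s,x,\xi)$; since $R$ never entered, the limit is independent of the choice of $R$ (equivalently of $\tau$), which is the assertion.

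The hard part is the last step of Step 3 --- showing $\mathcal{E}_n\to 0$, i.e.\ $\limsup_{h\downarrow 0}h^{-1}P^{x,s}(\tau_R^x\le h,\,\nnorm{X_\tau-x}\ge n)\to 0$ as $n\to\infty$ --- as this is exactly what decouples the limit from the truncation radius $R$. It relies on the integro-differential structure of Step 1 (only large jumps can move the stopped process far from $x$) together with the $1\wedge\nnorm{y}^2$-integrability of the L\'evy kernel of $A_s$; the requisite quantitative estimate is the time-inhomogeneous counterpart of the bounds in \cite{Schilling-Schnurr-2010}. A secondary technical point is the $h\downarrow 0$ passage in Step 3, where the hypothesis that $q(s,\cdot,\xi)$ be continuous in $x$ is used to ensure right-continuity of $r\mapsto A_r u_n^x(X_r)$ at $r=s$.
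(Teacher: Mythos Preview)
Your Step~1 (positive maximum principle $\Rightarrow$ Courr\`ege) and the use of Dynkin's formula with the cut-offs $u_n^x=\chi_n^x e_\xi$ match the paper. The substantive difference is the \emph{order of the limits} $n\to\infty$ and $h\downarrow 0$: you take $h\downarrow 0$ first for fixed $n$, which produces the remainder $\mathcal{E}_n$, and your control of it is only a sketch. To make it rigorous you would need
\[
\limsup_{h\downarrow 0}\frac{1}{h}\,P^{x,s}\bigl(\tau\le h,\ \nnorm{X_{\tau}-x}\ge n\bigr)\xrightarrow[n\to\infty]{}0
\]
for a general rich Feller evolution process under no hypothesis beyond continuity of $x\mapsto q(s,x,\xi)$. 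Invoking an unproved ``time-inhomogeneous counterpart'' of the maximal estimates in \cite{Schilling-Schnurr-2010} is precisely the gap you yourself flag; the statement is plausible, but it is not established here and is not free.

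The paper sidesteps this entirely by reversing the order. Since $\chi_n^x(X^\tau_{s+h})\to 1$ pointwise and everything is bounded by $1$, bounded convergence gives, for each fixed $h$,
\[
E^{x,s}\bigl(e^{i\langle X^\tau_{s+h}-x,\xi\rangle}-1\bigr)
=e_{-\xi}(x)\lim_{n\to\infty}E^{x,s}\!\int_s^{\tau\wedge(s+h)}\!A_r(\chi_n^x e_\xi)(X_{r-})\,dr.
\]
On this interval $X_{r-}\in\overline{B_R(x)}$, and Lemma~\ref{lem:folge} bounds $|A_r(\chi_n^x e_\xi)(z)|$ \emph{uniformly in $n$} on that compact set by a multiple of $\sup_{z\in\overline{B_R(x)}}\sup_{|\eta|\le 1}|q(r,z,\eta)|$; dominated convergence then moves $n\to\infty$ inside, giving (via Courr\`ege) the integrand $-e_\xi(X_{r-})\,q(r,X_{r-},\xi)$. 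Only afterwards does one divide by $h$ and let $h\downarrow 0$. The exit position of the stopped process --- the source of your $\mathcal{E}_n$ --- never appears, because one works exclusively with $X_{r-}$ on $[s,\tau)$, which is trapped in $\overline{B_R(x)}$. So the paper's order of limits is not a stylistic choice; it is what makes the argument close without any auxiliary large-jump estimate. (A minor point: for the final $h\downarrow 0$ passage both your argument and the paper's actually use right-continuity of $r\mapsto q(r,X_{r-},\xi)$ at $r=s$, i.e.\ some regularity of $q$ in the time variable as well; the paper in fact invokes joint continuity of $(r,z)\mapsto q(r,z,\xi)$ at that step.)
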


\begin{proof}
As in the time homogeneous case, it is easily seen, that the operators $A_s$ fulfil the positive maximum principle. Therefore, they are pseudodifferential operators by \citet[Th\'eor\`em 1.5]{Courrege-1966}. This is the functional analytic approach; now we complement this with the probabilistic approach. Let $(\chi_n^x)_{n\in\N}$ be the sequence of cut-off functions of Lemma \ref{lem:folge} and we write $e_\xi(x):=e^{i \langle x, \xi \rangle}$ for $x,\xi\in\R^d$. By the bounded convergence theorem and formula \eqref{dynkin} we see
\begin{align*}
 E^{x,t} \left( e^{i \langle X_{t+h}^\tau -x , \xi \rangle } -1\right)
 &=\lim_{n\to\infty} \big(E^{x,t} \Bigl[\chi_n^x (X_{t+h}^\tau) e_\xi(X_{t+h}^\tau) e_{-\xi}(x) -1 \Bigr]\big) \\
 &=e_{-\xi}(x) \lim_{n\to\infty} E^{x,t} \big(\chi_n^x(X_{t+h}^\tau) e_\xi (X_{t+h}^\tau) - \chi_n^x(x) e_\xi(x) \big)\\
 &=e_{-\xi}(x) \lim_{n\to\infty} E^{x,t} \int_t^{\tau \wedge (t+h)} A_s(\chi_n^x e_\xi)(X_s) \,ds \\
 &=e_{-\xi}(x) \lim_{n\to\infty} E^{x,t} \int_t^{\tau \wedge (t+h)} A_s(\chi_n^x e_\xi)(X_{s-}) \,ds.
\end{align*}
The last equality follows since a c\`adl\`ag process has a.s.\ a
countable number of jumps and since we are integrating with respect to
Lebesgue measure. Using our Lemma \ref{lem:folge} and writing the operator $A_s$ in
integro-differential form, we obtain for all $z\in \overline{B_R(x)}$
\begin{align*}
 A_s(\chi_n & e_\xi)(z) \\
 &\leq c_\chi \Bigl(|b(s,z)| + \frac 12\sum_{j,k=1}^d |\sigma^{jk}(s,z)| + \int_{y\neq 0} (1\wedge |y|^2)\,F(s,z,dy)\Bigr) (1+|\xi|^2)\\
 &\leq c_\chi'\sup_{z\in \overline{B_R(x)}} \sup_{|\xi|\leq 1} | p(s,z,\xi)|
\end{align*}
where $c_\chi$ and $c_\chi'$ are positive constants only depending on $\chi$. 
The last estimate follows with techniques described in the appendix of \cite{Schilling-Schnurr-2010}. 
The function $p(t,x,\xi)$ is locally bounded since it is continuous.
By definition of the stopping time
$\tau$ we know that for all $t\leq s\leq\tau\wedge (t+h)$ we have $
X_{s-}\in\overline{B_R(x)}$. Therefore, the integrand $A_s(\chi_n^x
e_\xi)(X_{s-})$, $t\leq s\leq \tau\wedge (t+h)$ appearing in the above
integral is bounded and we may use the dominated convergence theorem again. 
This yields
\begin{align*}
 E^{x,t} \left( e^{i \langle X_{t+h}^\tau -x, \xi \rangle } -1\right)
 &=e_{-\xi}(x) E^{x,t} \int_t^{\tau \wedge (t+h)} \lim_{n\to \infty} A_s(\chi_n^x e_\xi)(z) |_{z=X_{s-}} \,ds \\
 &=-e_{-\xi}(x) E^{x,t} \int_t^{\tau \wedge (t+h)} e_\xi(z) p(s,z,\xi)|_{z=X_{s-}} \,ds.
\end{align*}
The second equality follows from classical results due to \citet[Sections 3.3 and 3.4]{Courrege-1966}. Therefore,
\begin{align*}
 \lefteqn{\lim_{h\downarrow 0} \frac{E^{x,t} \left(e^{i \langle X_{t+h}^\tau - x ,\xi\rangle }-1\right)}{h}}\quad \\
 &=-e_{-\xi}(x) \lim_{h \downarrow 0} E^{x,t}\left( \frac 1t \int_t^{t+h} e_\xi(X_{s-}^\tau) p(s,X_{s-}^\tau,\xi) \1_{[[ t,\tau[[}(s) \,ds\right) \\
 &=-e_{-\xi}(x) \lim_{h \downarrow 0} E^{x,t}\left( \frac 1t \int_t^{t+h} e_\xi(X_{s}^\tau) p(s,X_{s}^\tau,\xi) \1_{[[ t,\tau [[}(s) \,ds\right)
\end{align*}
since we are integrating with respect to Lebesgue measure. The process $X^\tau$ is bounded on the stochastic interval $[[ 0,\tau [[$ and $(s,x) \mapsto p(s,x,\xi)$ is continuous for every $\xi\in\R^d$. Thus, again by dominated convergence,
\begin{gather*}
 \lim_{h\downarrow 0} \frac{E^{x,t} \left(e^{i \langle X_{t+h}^\tau - x, \xi \rangle}-1\right)}{h}
 =-e_{-\xi}(x) e_{\xi}(x) p(t,x,\xi)
 =-p(t,x,\xi).
\end{gather*}
~\\[-7.5ex]
\end{proof}
\message{^^J---^^J---^^J Take care for the correct position of the end-of-proof-sign ---^^J---^^J---^^J}

\medskip

In fact, above we have been more general then needed in the context of equation \eqref{eq-levy driven processes as SDGL}. For bounded coefficients (and hence a bounded symbol) it is not necessary to introduce a stopping time. In this case the limit without stopping time, that is, 
\[
- \lim_{h\downarrow 0}E^{x,t} \left(\frac{e^{i \langle X_{t+h}-x ,\xi \rangle}-1}{t}\right)
\]
would have been sufficient and coincides with $p(t,x,\xi)$ and $q(t,x,\xi)$ above. For future reference, however, we have included the general case. Now we use the above result on L\'evy driven diffusions. 

\begin{lemma} \label{sde-symbol}
The unique strong solution of the SDE \eqref{eq-levy driven processes as SDGL} $X_t^x(\omega)$ has the symbol $q:\R_+\times\R^d\times \R^d \to \C$ given by
 \[
 q(t, x,\xi)=p(t,x,\xi)=\psi(\Phi^\top\!(x,t)\xi)
 \]
 where $\Phi$ is the coefficient of the SDE and $\psi$ the symbol of the driving L\'evy process. Hence, the (family of) generators on $\D(A_s)\supseteq C^2_{0}(\R^d)$ can be written as
\begin{equation}\label{eq-generator of a integral process}
 \begin{split}
 A_sf(x) & =  \frac{1}{2}\sum_{j,k=1}^d\bigl(\sigma\Phi(x,s)\bigr)^{j,k}\frac{\partial^2f}{\partial x_j\partial x_k}(x) + \sum_{j=1}^d\bigl(b\Phi(x,s)\bigr)^{j}\frac{\partial f}{\partial x_j}(x)\\
 & \quad{} + \int\Big(f(x + \Phi(x,s)y) - f(x) - \sum_{j=1}^d\frac{\partial f}{\partial x_j}(x)\bigr(\Phi(x,s)\chi_{\texttt C}(y)\bigr)^{j}\Big)F(dy),
 \end{split}
\end{equation}
where $F$ is the L\'{e}vy measure corresponding to $L$ which integrates $(1\wedge |y|^2)$ and $\chi_{\texttt C}$ is a cut-off function.
\end{lemma}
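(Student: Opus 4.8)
The plan is to read the lemma off from Theorem~\ref{theorem:symbol} once the solution of \eqref{eq-levy driven processes as SDGL} has been identified as a rich $C_0$-Feller evolution process and its probabilistic symbol has been computed by It\^o's formula. Since $\Phi\in C^{1,1}_{\text b}(\R^d\times\R_+)$, the coefficient $(x,t)\mapsto\Phi(x,t)$ is bounded and globally Lipschitz in $x$, uniformly in $t$, so \eqref{eq-levy driven processes as SDGL} admits a unique strong c\`adl\`ag solution $X^x$ from every starting point and starting time; the usual flow estimates (continuity of $x\mapsto X_t^x$ and growth control of $\|X_t^x-x\|$ uniformly in $x$) show that $T_{s,t}$ leaves $C_0(\R^d)$ invariant and is strongly continuous, and applying It\^o's formula to $f\in C_c^\infty$ shows that the limit \eqref{eq-infi gen} exists in $C_0$, whence $C_c^\infty\subseteq\D(A_s)$ for every $s\ge0$. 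Thus $X$ is a rich $C_0$-Feller evolution process and Theorem~\ref{theorem:symbol} applies; these are the time-inhomogeneous analogues of the well-known homogeneous statements, cf.\ \cite{Schilling-Schnurr-2010}.

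Next I would compute the probabilistic symbol \eqref{symbol} directly. Fix $x$, a starting time $t$, $R>0$ and the exit time $\tau=\tau^x_R$ from \eqref{stopping}. Writing $L$ through its L\'evy--It\^o decomposition with triplet $(b,\sigma,F)$ and applying the It\^o formula for semimartingales with jumps to $s\mapsto e^{i\langle X_s^\tau-x,\xi\rangle}$, the predictable finite-variation part equals, using $\langle\Phi(z,s)v,\xi\rangle=\langle v,\Phi^\top(z,s)\xi\rangle$,
\[
-\int_t^{(t+h)\wedge\tau}e^{i\langle X_{s-}-x,\xi\rangle}\,\psi\bigl(\Phi^\top(X_{s-},s)\xi\bigr)\,ds ,
\]
where $\psi$ is the symbol of $L$; the remaining terms form a local martingale which, after stopping at $\tau$, is a genuine martingale because on $\{t\le s\le\tau\}$ the process stays in $\overline{B_R(x)}$, $\Phi$ is bounded, $|e^{iu}-1|\le 2\wedge|u|$ and $F$ integrates $1\wedge|y|^2$, so every stochastic integrand is bounded. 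Taking $E^{x,t}$ annihilates the martingale and leaves
\[
E^{x,t}\bigl(e^{i\langle X_{t+h}^\tau-x,\xi\rangle}-1\bigr)=-\,E^{x,t}\!\int_t^{(t+h)\wedge\tau}e^{i\langle X_{s-}-x,\xi\rangle}\,\psi\bigl(\Phi^\top(X_{s-},s)\xi\bigr)\,ds .
\]
Since $X$ is c\`adl\`ag and $x$ is interior to $B_R(x)$ we have $\tau>t$ a.s., while $X_{s-}\to x$ as $s\downarrow t$ and $(z,s)\mapsto\psi(\Phi^\top(z,s)\xi)$ is continuous, hence locally bounded. Dividing by $h$, letting $h\downarrow0$ and invoking dominated convergence together with the fundamental theorem of calculus yields $\lim_{h\downarrow0}h^{-1}E^{x,t}(e^{i\langle X_{t+h}^\tau-x,\xi\rangle}-1)=-\psi(\Phi^\top(x,t)\xi)$, independently of $R$; thus $p(t,x,\xi)=\psi(\Phi^\top(x,t)\xi)$.

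Finally, $x\mapsto\psi(\Phi^\top(x,s)\xi)$ is continuous, so Theorem~\ref{theorem:symbol} gives $q(s,x,\xi)=p(s,x,\xi)=\psi(\Phi^\top(x,s)\xi)$, i.e.\ $A_s|_{C_c^\infty}$ is the pseudo-differential operator with symbol $-\psi(\Phi^\top(\cdot,s)\,\cdot\,)$. Inserting the L\'evy--Khintchine form of $\psi$ (consistent with \eqref{eq-ex-cumulant fucntion of PII}) and using $\langle\Phi^\top(x,s)\xi,y\rangle=\langle\xi,\Phi(x,s)y\rangle$, so that $e^{i\langle\xi,\Phi(x,s)y\rangle}$ is the Fourier multiplier of the shift $f\mapsto f(\cdot+\Phi(x,s)y)$, a term-by-term Fourier inversion turns the quadratic part of $\psi$ into the second-order operator, the linear part into the drift term and the jump part into the integral term of \eqref{eq-generator of a integral process}; equivalently, rerunning the It\^o computation above with a general $f\in C_0^2(\R^d)$ in place of $e_\xi$ produces \eqref{eq-generator of a integral process} directly, the hypotheses $\Phi\in C^{1,1}_{\text b}$ and $\int(1\wedge|y|^2)F(dy)<\infty$ ensuring that the right-hand side is well defined and continuous, so that $C_0^2(\R^d)\subseteq\D(A_s)$.

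The main obstacle is the honest bookkeeping of the jump terms in It\^o's formula --- in particular verifying that the compensated-jump and Brownian integrals are true martingales once the process has been stopped at $\tau$, so that expectations kill them --- together with matching the cut-off $\chi_{\texttt C}$ of the L\'evy triplet with the (localized) exponentials underlying Theorem~\ref{theorem:symbol} when transferring the identity from $C_c^\infty$ to $C_0^2(\R^d)$.
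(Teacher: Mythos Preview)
Your proposal is correct and follows essentially the same route as the paper: establish that the solution of \eqref{eq-levy driven processes as SDGL} is a rich $C_0$-Feller evolution process, compute the probabilistic symbol by applying It\^o's formula to $e^{i\langle X_s^\tau-x,\xi\rangle}$ and the L\'evy--It\^o decomposition of $L$, then invoke Theorem~\ref{theorem:symbol} and Fourier inversion to obtain the generator. The only cosmetic difference is that the paper unpacks the It\^o formula into its three summands and treats the drift term explicitly before passing to the limit, whereas you package the compensator directly as $-\int e^{i\langle X_{s-}-x,\xi\rangle}\psi(\Phi^\top(X_{s-},s)\xi)\,ds$; this is the same computation in compressed form.
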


\begin{proof}
It is a well-known fact that the unique solution of a L\'evy driven SDE with Lipschitz coefficient is a Markov process (cf. \citet[Theorem 5.9]{Protter-1977}. In order to carry the solution, the stochastic basis on which the L\'evy process is defined has to be enlarged in a canonical way (cf. e.g. \citet[Section~V.6]{Protter-2005}). 
Next, one has to show that the solution is a universal time-inhomogeneous Markov process, that is, the transition function does not depend on the starting point. This can be done similarly to \citet[Theorem 2.47]{schnurr-2009}. The $C_0$-Feller property follows as in \citet[Theorem 2.49]{schnurr-2009}.

The calculation of the symbol works as in the proof of \citet[Theorem 3.1]{Schilling-Schnurr-2010}. We sketch the proof here in the case $d=n=1$ in order to emphasize analogies and differences: let $\tau$ be the stopping time given by
\eqref{stopping}. Fix $x,\xi \in \R$. We apply It\^o's formula for general semimartingales to the function $e_\xi(\cdot - x) =
\exp(i(\cdot-x)\xi)$:
\begin{equation}\label{threeterms}\begin{aligned}
 \frac 1h\,E^{x,t} \big( &e^{i(X^\tau_t-x)\xi} -1\big)\\
 &=\frac 1h\,E^{x,t} \bigg( \int_{t+}^{t+h} i \xi\,e^{i (X^\tau_{s-}-x) \xi} \,dX^\tau_s
 - \frac 12 \int_{t+}^{t+h} \xi^2\, e^{i (X^\tau_{s-}-x) \xi} \,d[X^\tau,X^\tau]_s^c\\
 &\quad{} +e^{-ix \xi} \sum_{t<s \leq t+h} \left(e^{i X^\tau_s \xi} - e^{i X^\tau_{s-} \xi} -i\xi
 e^{iX^\tau_{s-} \xi}\Delta X^\tau_s\right) \bigg).
\end{aligned}\end{equation}
For the first term we get {\footnotesize
\begin{align}
 \lefteqn{\frac 1h\,E^{x,t}  \int_{t+}^{t+h} \left(i \xi\, e^{i (X^\tau_{s-}-x) \xi}\right) \,dX^\tau_s}\quad \notag\\
 &= \frac 1h\,E^{x,t} \int_{t+}^{t+h} \left(i \xi \, e^{i (X^\tau_{s-}-x) \xi}\right) d\left(\int_0^s\Phi(X_{r-},r-)\1_{[[ 0,\tau ]]}(\cdot,r) \,dZ_r\right)\notag\\
 &= \frac 1h\, E^{x,t} \int_{t+}^{t+h} \left(i \xi \, e^{i (X^\tau_{s-}-x) \xi}
 \Phi(X_{s-},s-)\1_{[[ 0,\tau ]]}(\cdot,s) \right)\,dZ_s\notag\\
 &= \frac 1h\, E^{x,t} \int_{t+}^{t+h} \left(i \xi \, e^{i (X^\tau_{s-}-x) \xi}
 \Phi(X_{s-},s-)\1_{[[ 0,\tau ]]}(\cdot,s)\right) \,d(b s)\label{drift}\\
 &\quad {}+ \frac 1h\, E^{x,t} \int_{t+}^{t+h} \left(i \xi \, e^{i (X^\tau_{s-}-x) \xi}
 \Phi(X_{s-},s-)\1_{[[ 0,\tau ]]}(\cdot,s)\right) d\left(\sum_{0<r\leq s} \Delta Z_r \1_{\{\abs{\Delta Z_r}\geq 1 \}}\right)\notag
\end{align}}%
where we have used the well-known L\'evy--It\^o decomposition of the driving process. Since the integrand is bounded, the martingale terms of the L\'evy process yield again martingales whose expected value is zero.

For the drift part \eqref{drift} we obtain 
 \[
 \frac 1h\, E^{x,t} \int_{t+}^{t+h} \left(i \xi \cdot e^{i (X^\tau_{s-}-x) \xi}\Phi(X_{s-},s-) \1_{[[ 0,\tau[[}(\cdot,s) b \right)\,ds \xrightarrow{h\downarrow 0} i \xi b \Phi(x,t),
 \] 
since 
 \begin{align*}
\lefteqn{\frac 1h\, E^{x,t}   \int_{t+}^{t+h} \left(i \xi \cdot e^{i (X^\tau_{s-}-x) \xi}\Phi(X_{s-},s-) \1_{[[ 0,\tau[[}(\cdot,s) b \right)\,ds }\quad \\
 & = i \xi b  \cdot E^{x,t} \int_{t}^{t+1} \left(e^{i (X^\tau_{sh}-x) \xi}\Phi(X_{sh},sh) \1_{[[ 0,\tau[[}(\cdot,sh)\right)\,ds 
 \end{align*}

The other parts work alike. In the end we obtain
\begin{align}
 p(t, x,\xi)
 &= -i b (\Phi(x,t)\xi) + \frac{1}{2} (\Phi(x,t)\xi)\sigma (\Phi(x,t)\xi) \notag \\
 &\quad{} -\int_{y\neq 0} \left(e^{i (\Phi(x,t)\xi)y} -1 - i (\Phi(x,t)\xi)y \cdot \1_{\{\abs{y}<1\}}(y)\right) F(dy) \notag \\
 &=\psi(\Phi(x,t)\xi)\label{eq-time dependent symbol}.
\end{align}
Note that in the multi-dimensional case the matrix $\Phi(x,t)$ has to be transposed, that is, the symbol of the solution is $\psi(\Phi^\top\!(x,t)\xi)$.
We already know that the probabilistic and the classical symbol coincide. Hence, some Fourier analysis gives us the structure of the generator.
\end{proof}

\begin{rem}
We need the boundedness of $\Phi$ in order to derive the Feller property. The calculation of the symbol works even for unbounded coefficients (e.g. Lipschitz continuous $\Phi$). 
\end{rem}

Now we can prove the comparison result for L\'evy driven diffusion processes.
We consider $\F$ to be a subset of $C^2_{0}(\R^d)$ and $(L^{(i)}_t)_{t\ge 0},\,i=1,2$ to be a L\'{e}vy process with local characteristics $(b^{(i)},\sigma^{(i)}, F^{(i)})$. 
Further, let $X$ and $Y$ be L\'{e}vy driven diffusions defined by the stochastic differential equation \eqref{eq-levy driven processes as SDGL} associated to $(\Phi^{(1)},L^{(1)})$ and $(\Phi^{(2)},L^{(2)})$ with corresponding infinitesimal generator $A_s$ and $B_s$ respectively. Denote their evolution systems by $S=(S_{s,t})_{t\ge 0}$ and $T=(T_{s,t})_{t\ge 0}$, respectively.

\begin{cor}\textbf{\emph{(Comparison of time-inhomogeneous L\'{e}vy driven} \textbf{\emph{diffusion processes)}}}\label{cor-comp of Levy driven diffusion} \newline Assume that $X_0 \stackrel{d}{=} Y_0$ and
\begin{enumerate}
 \item 
$f \in \F$ implies $s\mapsto T_{s,t}f$ is right-differentiable for $0<s<t$,
 \item
$X$ is stochastically monotone w.r.t. $\F$ i.e. $f\in \F \text{ implies } S_{s,t}f\in\F$ for all $s\le t$,
 and
 \item\raisebox{-.3ex}{%
 \parbox{\linewidth}{%
 \begin{equation}\label{eq-cor-comparison of infini gen}
 A_sf\le B_sf ~[P^{X_0}]\quad\text{ for all }f\in \F \text{ and }s\le t.
 \end{equation}}}
\end{enumerate}
Then 
\begin{equation}\label{eq-cor-comp of expectations of Levy driven diffusion processes}
 X_t\fsord Y_t, \quad t\ge0.
\end{equation}
\end{cor}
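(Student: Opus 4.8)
The plan is to reduce the statement to the conditional comparison result, Theorem~\ref{theo-comparison result}, applied on the Banach space $\B=C_0(\R^d)$, and then to integrate the resulting pointwise ordering of the transition operators against the common initial law.

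First I would record the analytic setup furnished by Lemma~\ref{sde-symbol}: both $X$ and $Y$ are $C_0$-Feller evolution processes, so $S=(S_{s,t})_{s\le t}$ and $T=(T_{s,t})_{s\le t}$ are strongly continuous (contraction) evolution systems on $C_0(\R^d)$, with infinitesimal generators $A_s$ and $B_s$ whose domains contain $C_0^2(\R^d)$. Since $\F\subset C_0^2(\R^d)$, the requirement $\F\subset\D(A_s)\cap\D(B_s)$ from \eqref{eq-F is in domains} holds for every $s>0$; and since functions in $C_0^2(\R^d)$ are bounded, the integrability condition \eqref{eq-integrability condition} is automatic for all $f\in\F$. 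At this point the three hypotheses of the corollary are literally the three hypotheses of Theorem~\ref{theo-comparison result}: the right-differentiability of $s\mapsto T_{s,t}f$, the stochastic monotonicity $S_{s,t}f\in\F$ of $X$, and the generator comparison $A_sf\le B_sf~[P^{X_0}]$. Hence Theorem~\ref{theo-comparison result} applies and gives
\[
 S_{s,t}f\le T_{s,t}f\quad[P^{X_0}]\qquad\text{for all }f\in\F,\ s\le t.
\]

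Next I would specialise to $s=0$. By the representation \eqref{eq-evolution system via cond. expectation} this inequality reads $E\bigl(f(X_t)\mid X_0=x\bigr)\le E\bigl(f(Y_t)\mid Y_0=x\bigr)$ for $P^{X_0}$-almost every $x$. Integrating both sides against $P^{X_0}=P^{Y_0}$ --- which is where the assumption $X_0\stackrel{d}{=}Y_0$ enters --- yields
\[
 Ef(X_t)=\int E\bigl(f(X_t)\mid X_0=x\bigr)\,P^{X_0}(dx)\le\int E\bigl(f(Y_t)\mid Y_0=x\bigr)\,P^{Y_0}(dx)=Ef(Y_t)
\]
for every $f\in\F$ and every $t\ge0$, i.e.\ $X_t\fsord Y_t$ for $t\ge0$.

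The only genuine content here is hidden in Lemma~\ref{sde-symbol}: that the SDE \eqref{eq-levy driven processes as SDGL} with $C^{1,1}_{\mathrm{b}}$-coefficient $\Phi$ produces a strongly continuous $C_0$-Feller evolution system whose generator contains $C_0^2(\R^d)$ in its domain and has the stated integro-differential form. That is the step I would expect to cost the most care --- it rests on well-posedness of the SDE, the universal (starting-point-independent) Markov property of its solution, and the symbol computation. Once it is in hand, the comparison itself is a formal application of Theorem~\ref{theo-comparison result} together with a Fubini-type averaging over the initial distribution.
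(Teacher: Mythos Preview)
Your proposal is correct and follows essentially the same route as the paper: both argue that Lemma~\ref{sde-symbol} puts $S$ and $T$ in the $C_0$-Feller evolution framework, then apply Theorem~\ref{theo-comparison result} directly to obtain $S_{s,t}f\le T_{s,t}f~[P^{X_0}]$, and finally integrate at $s=0$ against the common law $P^{X_0}=P^{Y_0}$ to get $Ef(X_t)\le Ef(Y_t)$. You are slightly more explicit than the paper about why $\F\subset\D(A_s)\cap\D(B_s)$ and about the integrability condition, but there is no substantive difference in the argument.
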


\begin{proof}
We have seen above already that for every choice of $\Phi^{(i)}$ and every L\'evy process $L^{(i)}$ the solution of \eqref{eq-levy driven processes as SDGL} is a $C_0$-Feller evolution process. Since $(C_0, \nnorm{\cdot}_\infty)$ is a Banach space we can directly use Theorem \ref{theo-comparison result}
and obtain $S_{s,t}f\le T_{s,t}f$ for all $f\in \F$, $s\le t$. By using $X_0 \stackrel{d}{=} Y_0$ this yields for $f\in\F$ and $t\ge 0$
 \begin{align*}
 Ef(X_t)& =  E_0[E[f(X_t)|X_0 = x]] \\
	 & =  E_0S_{0,t}f(x) \\
	& \le  E_0T_{0,t}f(x) \\
	 & =  E_0[E[f(Y_t)|Y_0 = x]]
	 = Ef(Y_t),
 \end{align*}
where $E_{0}$ is the expectation with respect to $P^{X_0}$.
\end{proof}

Corollary \ref{cor-comp of Levy driven diffusion} shows that ordering results for L\'{e}vy driven diffusion processes for $\F\subset C^2_0$ are implied by a stochastic monotonicity assumption of one process and comparability of the local characteristics. In Section \ref{subsec-Levy driven diffusion on L^2(nu)} we consider L\'{e}vy driven diffusion processes as inhomogeneous Markov processes on $\bar{\LLl}^2_2(\nu)$ in order to derive comparison results w.r.t.\ unbounded function classes. 
Some further applications of these ordering results derived in a similar way are listed in the following remark.

\subsection{Processes with independent increments}\label{subsec-Apps and Exs-PII}
Let $(L_t)_{t\ge 0}$ be a PII such that $E|L_t|^2 < \infty$ for all $t\in\R_+$. Its characteristic function is given by $Ee^{i\langle\xi,L_t\rangle} = \exp(\int_0^t\theta_u(i\xi)du)$ with cumulant function given in \eqref{eq-ex-cumulant fucntion of PII}. We consider the case of PII with L\'{e}vy measures $F_s^{(i)},i=1,2$ which integrates $(|y| \wedge |y|^2)$. We choose the cut-off function $\chi_{\texttt C}$ as the identity. Denote for $s\ge 0$ the local characteristics of $L^{(i)}$ by $(b_s,\sigma_s^{(i)},F^{(i)}_s)$. The corresponding infinitesimal generators are $A_s$ and $B_s$ respectively. If the assumptions of Proposition \ref{propos-parade Beispiel modified L p space is well suited for trans inv MP} are fulfilled the family $(T_{s,t})_{s\le t}$ of operators defined in \eqref{eq-rem-valueable equation for PII} is a strongly continuous ES on $\bar{\LLl}^2_{2}(\nu)$. The corresponding infinitesimal generator $(A_s)_{s\ge 0}$ is given in \eqref{eq-jump term of infini gen} defined on $\D(A_s)$. For this instance we state a comparison result for the convex order $\sord{cx}
$ which is generated by 
 \[
 \fclass{cx}^0 = \fclass{cx} \cap B_{\text b} \cap \bar{\WW}^2(\nu),
 \]
where $B_\text{b}$ is the function class of $b$-bounded function and $b$ is chosen as in Remark \ref{rem-convex generator is in modified L p space}. Similar results hold for $\sord{dcx},\sord{sm},\sord{st},\sord{ism}$ as well. 

\begin{cor}\textbf{\emph{(Comparison of PII w.r.t. \boldmath $\fclass{cx}$)}} \\
\label{cor-comp of PII w.r.t. cx}
Assume $L^{(1)}_{r}\stackrel{d}{=}L^{(2)}_{r}$ for some $r\le t$. If $(S_{s,t})$ and $(T_{s,t})$ are smooth operators and if $(S_{s,t})$ leaves $B_{\text b}$ invariant, then the comparison of the local characteristics
\begin{align}
 \sigma_s^{(1)} &\sord{psd } \sigma_s^{(2)}\label{eq-cor-comp of volas-1}\\
 \int_{\R^d}f(x)F^{(1)}_s(dx) &\le\int_{\R^d}f(x) F^{(2)}_s(dx)\label{eq-cor-comp of Levy meas-1}
\end{align}
for all $s\le t$ and all $f\in\fclass{cx}$ with $f(0)=0$ such that the integrals exist, implies that
\begin{align}\label{eq-cor-explicit comparison of PII-with different jumps}
 L^{(1)}_t\sord{cx}L^{(2)}_t,\,t\ge 0.
\end{align}
\end{cor}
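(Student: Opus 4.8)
The plan is to obtain the conclusion from the conditional comparison theorem, Theorem~\ref{theo-comparison result}, applied on the Banach space $\bar{\LLl}^2_2(\nu)$ to the order generating class $\F:=\fclass{cx}^0=\fclass{cx}\cap B_{\text b}\cap\bar{\WW}^2(\nu)$, and then to transfer the resulting operator inequality to the one--dimensional marginals by using independence of increments. As a preparatory step I would note that, under the assumptions of Proposition~\ref{propos-parade Beispiel modified L p space is well suited for trans inv MP}, $S$ and $T$ are strongly continuous evolution systems on $\bar{\LLl}^2_2(\nu)$, and that, since $E|L^{(i)}_t|^2<\infty$ and $\F\subset\bar{\WW}^2(\nu)$, Proposition~\ref{prop-Diff operator is infini gen} yields $\F\subset\D(A_s)\cap\D(B_s)$ with $A_s$, $B_s$ acting on $\F$ through formula \eqref{eq-jump term of infini gen}, here with $\chi_{\texttt C}=\Id$ and triplets $(b_s,\sigma^{(1)}_s,F^{(1)}_s)$ and $(b_s,\sigma^{(2)}_s,F^{(2)}_s)$; in particular both generators carry the \emph{same} drift $b_s$.

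Next I would verify the three hypotheses of Theorem~\ref{theo-comparison result}. Hypothesis~\ref{enum-theo-T leaves invariant} is exactly the assumed smoothness of $(T_{s,t})$: by Remark~\ref{rem-smooth operator} it gives $T_{s,t}f\in C^2(\R^d)\cap\bar{\LLl}^2_2(\nu)$ with $T_{s,t}(\partial f),T_{s,t}(\partial^2 f)\in\bar{\LLl}^2_2(\nu)$, hence $T_{s,t}f\in\bar{\WW}^2(\nu)\subset\D(A_s)$ and $s\mapsto T_{s,t}f$ right differentiable. For hypothesis~\ref{enum-theo-X is stoch mon} I must check $S_{s,t}f\in\F$ whenever $f\in\F$: convexity is preserved because $S_{s,t}f(x)=\int_{\R^d}f(x+y)\,P^{L^{(1)}_t-L^{(1)}_s}(dy)$ is a mixture of translates of the convex $f$ (cf.\ Remark~\ref{rem-translation invariant trans function}); membership in $\bar{\WW}^2(\nu)$ again follows from the smoothness of $(S_{s,t})$; and membership in $B_{\text b}$ is the hypothesis that $(S_{s,t})$ leaves $B_{\text b}$ invariant.

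The crux is hypothesis~\eqref{eq-theo-fsord of infini gen}, that is $A_sf\le B_sf$ on $\F$ for $s\le t$. Since the drifts coincide and $\chi_{\texttt C}=\Id$, formula \eqref{eq-jump term of infini gen} gives for $f\in\F$ and $x\in\R^d$
\[
B_sf(x)-A_sf(x)=\tfrac12\sum_{j,k=1}^d\bigl(\sigma^{(2)}_s-\sigma^{(1)}_s\bigr)^{j,k}\frac{\partial^2 f}{\partial x_j\partial x_k}(x)+\int_{\R^d}g_x(y)\,\bigl(F^{(2)}_s-F^{(1)}_s\bigr)(dy),
\]
where $g_x(y):=f(x+y)-f(x)-\sum_{j=1}^d\frac{\partial f}{\partial x_j}(x)y^j$. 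The first summand equals $\tfrac12\operatorname{tr}\bigl((\sigma^{(2)}_s-\sigma^{(1)}_s)\,\mathrm{Hess}\,f(x)\bigr)\ge 0$, since $\sigma^{(2)}_s-\sigma^{(1)}_s$ is positive semidefinite by \eqref{eq-cor-comp of volas-1}, the Hessian of a convex function is positive semidefinite, and $\operatorname{tr}(MN)\ge 0$ for symmetric positive semidefinite $M$, $N$. For the second summand I would fix $x$ and observe that $y\mapsto g_x(y)$ is convex (a sum of $y\mapsto f(x+y)$ and an affine function) with $g_x(0)=0$, and that it is integrable against both $F^{(i)}_s$: near the origin $|g_x(y)|=O(|y|^2)$ by a second--order Taylor expansion (here $f\in C^2$), while for large $|y|$ one has $|g_x(y)|\le|f(x+y)|+|f(x)|+|\nabla f(x)|\,|y|=O(1+|y|)$ because $f\in B_{\text b}$, and $F^{(i)}_s$ integrates $|y|\wedge|y|^2$. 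Thus $g_x$ is an admissible test function in \eqref{eq-cor-comp of Levy meas-1}, so $\int_{\R^d}g_x\,d(F^{(2)}_s-F^{(1)}_s)\ge 0$, and therefore $A_sf\le B_sf$ pointwise on $\R^d$.

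With all hypotheses in place, Theorem~\ref{theo-comparison result} gives $S_{s,t}f\le T_{s,t}f$ for every $f\in\F$ and $s\le t$ (pointwise on $\R^d$, since the generator comparison above holds at every point). To conclude I would use independence of increments, $S_{r,t}f(x)=Ef(x+L^{(1)}_t-L^{(1)}_r)$ and $T_{r,t}f(x)=Ef(x+L^{(2)}_t-L^{(2)}_r)$, both $b$--bounded by the finite first moment, to get for $f\in\F$
\[
Ef\bigl(L^{(1)}_t\bigr)=\int_{\R^d}S_{r,t}f\,dP^{L^{(1)}_r}\le\int_{\R^d}T_{r,t}f\,dP^{L^{(1)}_r}=\int_{\R^d}T_{r,t}f\,dP^{L^{(2)}_r}=Ef\bigl(L^{(2)}_t\bigr),
\]
the middle equality being $L^{(1)}_r\stackrel{d}{=}L^{(2)}_r$; since $\fclass{cx}^0$ is order generating for $\sord{cx}$, this proves \eqref{eq-cor-explicit comparison of PII-with different jumps}. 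I expect the main obstacle to be the jump term: recognizing the compensated increment $g_x$ as a convex function vanishing at the origin so that \eqref{eq-cor-comp of Levy meas-1} applies, and verifying its integrability against the L\'evy measures --- which is exactly where the hypothesis that $F^{(i)}_s$ integrates $|y|\wedge|y|^2$ and the $b$--boundedness of $f$ are used.
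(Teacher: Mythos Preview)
Your proposal is correct and follows essentially the same route as the paper: verify the three hypotheses of Theorem~\ref{theo-comparison result} on $\bar{\LLl}^2_2(\nu)$ for $\F=\fclass{cx}^0$, compare the generators termwise, and then pass to unconditional marginals via $L^{(1)}_r\stackrel{d}{=}L^{(2)}_r$. The only cosmetic difference is that you handle the diffusion term via $\operatorname{tr}(MN)\ge0$ for positive semidefinite $M,N$, whereas the paper writes out the spectral decomposition of $\sigma_s^{(2)}-\sigma_s^{(1)}$; and you spell out the integrability of $g_x$ against $F^{(i)}_s$ more explicitly than the paper does.
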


\begin{proof}
Let $f\in\fclass{cx}^0$. Since $(S_{s,t})$ leaves $B_{\text b}$ invariant we obtain by Remark \ref{rem-smooth operator}-(c) and the smoothness of $(S_{s,t})$ that $S_{s,t}f \in \bar{\WW}^2(\nu)$. The invariance in $\fclass{cx}$ follows by Remark \ref{rem-translation invariant trans function}-(b). 
Similarily the smoothness of $T_{s,t}$ and Proposition \ref{prop-Diff operator is infini gen} ensure that $T_{s,t}f\in\D(B_s)$ for all $s\ge 0$ (see Remark \ref{rem-translation invariant trans function}-(c)). We have choosen above the cut-off function $\chi_{\texttt C}$ as the identity, since the Levy measures fulfil $\int_{\R^d}(|y|\wedge|y|^2)F^{(i)}_s(dy)<\infty,\, i=1,2$. Thus the jump part has the form
\begin{align*}
 \lefteqn{\int_{\R^d} f(x + y) - f(x) - \sum_{j=1}^d\frac{\partial f}{\partial x_j}(x)(y)^{(j)}F_s^{(i)}(dy)}\quad \\
  &= \int_{\R^d} f(x + y) - f(x) - \sum_{j=1}^d\frac{\partial f}{\partial x_j}(x)(y)^{(j)}\1_{\{|y|<1\}} F_s^{(i)}(dy)\\
 & \quad {}- \int_{\R^d}\sum_{j=1}^d \frac{\partial f}{\partial x_j}(x)(y)^{(j)}\1_{\{|y|\ge1\}} F_s^{(i)}(dy).
\end{align*}
From this representation we note that both terms belong to $\bar{\LLl}^2_2(\nu)$ which can be seen by using similar arguments as in the proof of Lemma \ref{lem-diff operator is OKAY}. Thus, the infinitesimal generator has the form as in \eqref{eq-jump term of infini gen}. Hence, for $f\in\fclass{cx}^0$ and $s\le t$ we obtain
\begin{align}
 (B_s - A_s)f(x) & =  \frac{1}{2}\sum_{j,k=1}^d\frac{\partial^2 f}{\partial x_j\partial x_k}(x)\Bigl({\sigma_s^{(2)}}^{j,k} - {\sigma_s^{(1)}}^{j,k}\Bigr)\label{eq-difference of diffusion terms in infinigen difference}\\
 &\quad{} +\int_{\R^d}\Bigl(f(x + y) - f(x) -\sum_{j=1}^d\frac{\partial f}{\partial x_j}(x)(y)^j\Bigr)\Bigl(F^{(2)}_s- F^{(1)}_s\Bigr)(dy).\notag
\end{align}
Due to the positive semidefiniteness of $({\sigma_s^{(2)}}^{j,k}- {\sigma_s^{(1)}}^{j,k})_{j,k\le d}$ for fixed $s\ge0$, its spectral decomposition is given by $\bigl(\sum_{i\le d}\lambda_i\text{e}_i^j\text{e}_i^k\bigr)$,
where the eigenvalues $\lambda_i$ are non-negative and $\text{e} = (\text{e}_i^1,\ldots,\text{e}_i^d)$ denote the eigenvectors. Convexity of $f$ implies 
 \[
 \frac{1}{2}\sum_{i=1}^d\lambda_i\sum_{j,k=1}^d\frac{\partial^2f}{\partial x_j\partial x_k}(x) \text{e}^j_i\text{e}^k_i\ge 0.
 \]
Moreover, the function $y \mapsto f(x + y) - f(x) - \sum_{j=1}^d\frac{\partial f}{\partial x_j}(x)y^{(j)}$ is convex, since it is a sum of a convex function and a linear function. Now using the comparison of the L\'{e}vy measures in \eqref{eq-cor-comp of Levy meas-1} we get $(B_s - A_s)f(x)\ge 0$. Thus, the assumptions of Theorem \ref{theo-comparison result} are satisfied and we obtain 
 \[
 Ef(L^{(1)}_t) = E_{r}E\bigl(f(L^{(1)}_t)|L^{(1)}_{r} = x\bigr)
 \le E_{r}E\bigl(f(L^{(2)}_t)|L^{(2)}_{r} = x\bigr)
 = Ef(L^{(2)}_t),
 \]
where $E_{r}$ is the expectation with respect to $P^{L^{(1)}_{r}}$.
\end{proof}

\begin{Remark}\label{rem-sufficient condition}
The following gives for some function classes comparison conditions of the local characteristics which imply stochastic ordering of the L\'{e}vy driven diffusion processes in the case of L\'{e}vy measures, $F^{(i)},\,i=1,2$ which integrate $(|y|\wedge|y|^2)$. 
For $d\times d$ matrices $A,\,B$ with real entries, the positive semidefinite order $A\sord{psd}B$ is defined by $x^\top(B-A)x\ge0$ for all $x\in\R^d$, where $x^\top$ is the transpose of $x$. Here, we make use of the explicit representation of the generator given in \eqref{eq-generator of a integral process}.

If ${\Phi^{(i)}}:\R^d\times\R_+\to\R^{d\times d}_+,\,i=1,2$ and ${\Phi^{(1)}} \sord{psd} {\Phi^{(2)}}$, then sufficient conditions for \eqref{eq-cor-comparison of infini gen} are:
 \begin{align*}
 &\text{\emph{\textbf{Ordering}} }& &\text{\emph{\textbf{Drift}} }& &\text{\emph{\textbf{Diffusion}} }& &\text{\emph{\textbf{Jump}} }\\
 &\fclass{st} & &b^{(1)}\le b^{(2)}& &\sigma^{(1)}= \sigma^{(2)}& & F^{(1)} =F^{(2)}\\
 &\fclass{sm} & &b^{(1)} = b^{(2)}& &\sigma^{(1)}\le_{\text{d}} \sigma^{(2)}& & F^{(1)}\sord{sm}F^{(2)}\\
 &\fclass{ism} & &b^{(1)} \le b^{(2)}& &\sigma^{(1)}\le_{\text{d}} \sigma^{(2)}& & F^{(1)}\sord{sm}F^{(2)}
 \end{align*}
 where $\sigma^{(1)}\le_{\text{d}} \sigma^{(2)}$ means pointwise ordering $\sigma^{(1)}\le \sigma^{(2)},\, {\sigma^{(1)}}^{j,j}= {\sigma^{(2)}}^{j,j},\,j\le d$. For every generating function class, the latter fact can be justified as follows: Let, for instance, $f\in \fclass{st}$. Then, the first order derivatives of $f$ are non-negative. 
Moreover, let $b^{(1)}\le b^{(2)}\,\sigma^{(1)}= \sigma^{(2)}$ and $F^{(1)}=F^{(2)}$, hence the corresponding generators in \eqref{eq-generator of a integral process} are ordered, too. Then, the comparison result follows from Corollary 
\ref{cor-comp of Levy driven diffusion on L^2(nu)}.
\end{Remark}

\subsection{L\'{e}vy driven diffusion processes on \boldmath $\bar{\LLl}^2_2(\nu)$}
\label{subsec-Levy driven diffusion on L^2(nu)}
We reconsider L\'{e}vy driven diffusion processes as inhomogeneous Markov processes. In comparison to Corollary \ref{cor-comp of Levy driven diffusion} where these processes were considered as strongly continuous evolution systems on $C_{0}(\R^d)$ we now consider them as strongly continuous evolution systems on $\bar{\LLl}^2_2(\nu)$ assuming for both processes the kernel assumption $(K)$ as well as $(C)$ and $(D)$. We state a comparison result for L\'{e}vy driven diffusion w.r.t. $\le_{\F^0}$, where $\F$ is from \eqref{eq-fst def} - \eqref{eq-mixes def} and
	\[
	\F^0 = \F \cap B_{\text b} \cap \bar{\WW}^2(\nu).
	\]
Now let $(L^{(i)}_t)_{t\ge 0},\,i=1,2$ be a L\'{e}vy process such that $E|L_t|^2<\infty,\,t>0$, with local characteristics $(b^{(i)},\sigma^{(i)}, F^{(i)})$ such that $F^{(i)},\,i=1,2$ integrates $(|y|\wedge |y|^2)$. Further, let $X$ and $Y$ be L\'{e}vy driven diffusions defined by the stochastic differential equation \eqref{eq-levy driven processes as SDGL} associated to $(\Phi^{(1)},L^{(1)})$ and $(\Phi^{(2)},L^{(2)})$ respectively. 
Denote their infinitesimal generator by $A_s$ and $B_s$ respectively defined in \eqref{eq-generator of a integral process} and their evolution systems on $\bar{\LLl}^2_2(\nu)$ by $S=(S_{s,t})_{t\ge 0}$ and $T=(T_{s,t})_{t\ge 0}$.

Extending the $ES$ of $X$ and $Y$ onto $\bar{\LLl}^2_2(\nu)$ by Proposition \ref{propos-trans func of general MP is evo system on L^2(nu)} it is clear that the form and the mapping behaviour of the associated infinitesimal generators (see equation \eqref{eq-generator of a integral process}) for $f \in \bar{\WW}^2(\nu)$ is deduced similarly as in Lemma \ref{lem-diff operator is OKAY} and Proposition \ref{prop-Diff operator is infini gen}.

\begin{cor}[Comparison of L\'{e}vy driven diffusion processes]%
\label{cor-comp of Levy driven diffusion on L^2(nu)}\hfill\\
 Assume that $X$ and $Y$ possess the transition kernels $(P^X_t)_{t\ge0}$ and $(P^Y_t)_{t\ge0}$, which satisfy $(K)$, $(C)$ and $(D)$. 
If $X_0 \stackrel{d}{=} Y_0$ and
\begin{enumerate}
 \item $f \in \F$ implies $s\mapsto T_{s,t}f$ is right-differentiable for $0<s<t$,
 \item$X$ is stochastically monotone w.r.t. $\F^0$, and
 \item\raisebox{-.3ex}{%
 \parbox{\linewidth}{%
 \begin{equation}\label{eq-cor-comparison of infini gen on H s}
 A_sf\le B_sf\quad\text{ a.s. for all }f\in \F \text{ and }s\le t,
 \end{equation}}}
\end{enumerate}
then 
\begin{equation}\label{eq-cor-comp of expectations of Levy driven diffusion processes on H s}
 X_t\fsord Y_t,\,t\ge0.
\end{equation}
\end{cor}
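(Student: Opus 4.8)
The plan is to reduce the statement to the conditional comparison result Theorem~\ref{theo-comparison result}, applied on the Banach space $\bar{\LLl}^2_2(\nu)$ with the order generating subclass $\F^0$ in place of $\F$, and then to pass from the resulting inequality between transition operators to an inequality between one-dimensional marginals by integrating against $P^{X_0}=P^{Y_0}$.

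First I would verify the structural hypotheses needed to invoke Theorem~\ref{theo-comparison result}. Since the transition kernels $(P^X_t)_{t\ge0}$ and $(P^Y_t)_{t\ge0}$ satisfy $(K)$, $(C)$ and $(D)$, Proposition~\ref{propos-trans func of general MP is evo system on L^2(nu)} shows that the corresponding families of transition operators $S=(S_{s,t})_{s\le t}$ and $T=(T_{s,t})_{s\le t}$ are strongly continuous evolution systems on $\bar{\LLl}^2_2(\nu)$. Moreover, since $F^{(i)}$ integrates $(|y|\wedge|y|^2)$, the arguments of Lemma~\ref{lem-diff operator is OKAY} and Proposition~\ref{prop-Diff operator is infini gen} carry over to L\'evy driven diffusions (as already noted above), so that on $\bar{\WW}^2(\nu)$ the generators $A_s$ and $B_s$ are given by the integro-differential expression \eqref{eq-generator of a integral process} and map $\bar{\WW}^2(\nu)$ into $\bar{\LLl}^2_2(\nu)$. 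In particular $\F^0=\F\cap B_{\text b}\cap\bar{\WW}^2(\nu)\subset\D(A_s)\cap\D(B_s)$ for every $s>0$, which is the standing requirement \eqref{eq-F is in domains} for the class $\F^0$.

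Next, conditions (1), (2) and (3) of the corollary are, once $\F$ is replaced by $\F^0$, exactly conditions (1), (2) and (3) of Theorem~\ref{theo-comparison result}: the right-differentiability of $s\mapsto T_{s,t}f$ and the infinitesimal-generator inequality $A_sf\le B_sf$ are assumed for all $f\in\F\supset\F^0$, while the stochastic monotonicity of $X$ is assumed precisely with respect to $\F^0$. Hence Theorem~\ref{theo-comparison result} applies and yields
\[
 S_{s,t}f \le T_{s,t}f \quad [P^{X_0}] \ \text{ for all } f\in\F^0,\ s\le t.
\]

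The remaining --- and main --- step is to upgrade this to the order $\fsord$. For probability measures obeying the integrability condition \eqref{eq-integrability condition}, the subclass $\F^0$ is still order generating for $\fsord$: as recorded in Remark~\ref{rem-convex generator is in modified L p space} and in the discussion following \eqref{eq-mixes def}, every $f\in\F$ can be approximated (monotonically, after mollification to obtain a $C^2$ function and truncation to obtain a $b$-bounded one) by elements of $\F^0$, and \eqref{eq-integrability condition} permits passing to the limit in $\int f\,d\mu\le\int f\,d\nu$; this approximation argument is where the actual work lies, the preceding steps being routine checks. Granting it, and using $X_0\stackrel{d}{=}Y_0$, we obtain for every $f\in\F^0$ and $t\ge0$
\[
 Ef(X_t) = E_0\bigl[E(f(X_t)\mid X_0=x)\bigr] = E_0 S_{0,t}f(x) \le E_0 T_{0,t}f(x) = E_0\bigl[E(f(Y_t)\mid Y_0=x)\bigr] = Ef(Y_t),
\]
where $E_0$ denotes expectation with respect to $P^{X_0}=P^{Y_0}$. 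Since $\F^0$ generates $\fsord$, this gives $X_t\fsord Y_t$ for all $t\ge0$.
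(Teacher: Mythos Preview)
Your proposal is correct and follows the same route the paper takes (the paper does not spell out a formal proof here, but the paragraph preceding the corollary together with the analogous argument in Corollary~\ref{cor-comp of Levy driven diffusion} makes the intended proof clear): establish the $ES$-property on $\bar{\LLl}^2_2(\nu)$ via Proposition~\ref{propos-trans func of general MP is evo system on L^2(nu)}, check $\F^0\subset\D(A_s)\cap\D(B_s)$ via the $\bar{\WW}^2(\nu)$ analogues of Lemma~\ref{lem-diff operator is OKAY} and Proposition~\ref{prop-Diff operator is infini gen}, apply Theorem~\ref{theo-comparison result}, and then integrate against $P^{X_0}=P^{Y_0}$. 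Your only addition is making explicit the passage from $\le_{\F^0}$ to $\le_\F$ by the order-generating property of $\F^0$, which the paper simply takes for granted from the discussion after \eqref{eq-mixes def}.
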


\setcounter{equation}{0}
\section{Pseudo-differential operators on Sobolev Slobodeckii spaces}\label{sec-general remark}
Finally, we make some remarks on a method to compare general Markov processes based on \emph{Sobolev Slobodeckii} spaces and the theory of pseudo-differential operators. We relate this method to strongly continuous evolution systems as used in this paper. 
The main reference for this final section is \cite{Boettcher-2008} who gave a representation result for pseudo-differential operators on Sobolev Slobodeckii spaces similar as in Theorem \ref{theo-evolution problem}. 
His result is based on developments on pseudo-differential operators in \cite{Eskin-1981}, \cite{Hoh-1998} and \cite{Jacob-2001,Jacob-2002}. 

For $r\in\R$ the \emph{Sobolev-Slobodeckii }space is defined as
\begin{equation}\label{eq-Sobolev-Slobo Space}
 \HH^r(\R^d) := \bigl\{u\in \Ss'(\R^d)\big| \hat u \in\LLl^1_{\text{loc}}(\R^d)\text{ s.t. }\|u\|_r<\infty\bigr\}
\end{equation}
with 
\begin{equation}\label{eq-Sobolov-Slobo Space Norm}
 \|u\|_r^2 =\int_{\R^d}|\hat u(\xi)|^2(1+|\xi|)^{2r}d\xi,
\end{equation}
where $\Ss'(\R^d)$ is the dual space of the \emph{Schwartz} space $\Ss(\R^d)$ and $\hat u$ denotes the Fourier transform of $u$. The space $\HH^r(\R^d)$ endowed with the scalar product $\langle\cdot,\cdot\rangle_r$,
 \[
\langle u,v \rangle_r:=\int\hat u(\xi)\overline{\hat v(\xi)}(1+|\xi|)^{2r} d\xi,\,u,v\in\HH^r(\R^d)
\]
is a separable Hilbert space (see \citet[Theorem 4.1]{Eskin-1981}). Moreover for $r\ge 0\,(\HH^r(\R^d),\langle\cdot,\cdot\rangle_r)$ is isomorph to 
 \[
\Bigl\{u\in\LLl^2(\R^d,\C)\Big|\int|\hat u(\xi)|^2(1+|\xi|)^{2r}d\xi < \infty\Bigr\}
\]
endowed with the same scalar product. 
The space $\HH^r(\R^d)$ is an interesting example of a Banach function class which uses regularity conditions on the elements but allows unbounded functions. 

Elements $u$ of $\HH^r(\R^d)$ fulfil certain integrability and regularity assumptions since the existence of higher moments of $\hat u$ in \eqref{eq-Sobolov-Slobo Space Norm} corresponds to higher order regularity of the function. For instance, $u\in\LLl^1(\R)$ is continuously differentiable if the first moment of $\hat{u}$ is integrable, that is if $\int|\xi\|\hat u(\xi)| d\xi<\infty$, and in this case we have
\begin{align*}
 \frac{\partial}{\partial x}u(x) & =  \frac{\partial}{\partial x}\frac{1}{(2\pi)^{1/2}}\int e^{-ix\xi}\hat u(\xi)d\xi\\
 & =  \frac{1}{(2\pi)^{1/2}}\int (-i)e^{-ix\xi}\xi\hat u(\xi)d\xi.
\end{align*}
A pseudo-differential operator $q(t,x,D)$ is defined on a suitable space by 
\begin{equation}
 q(t,x,D)f(x):=(2\pi)^{-d/2}\int_{\R^d}e^{-i\langle x,\xi \rangle} q(t,x,\xi)\hat f(\xi)d\xi 
\end{equation}
if the right-hand side exists. The function $q$ which defines the operator is called the \emph{symbol} of the pseudo-differential operator.
 
In \citet[Theorem 2.3 ]{Boettcher-2008} a representation result for solutions of evolution equations on $\HH^r(\R^d)$ is established. For an application of this result it has to be checked that the transition functions $T_{s,t}$ and the infinitesimal generator $A_s$ (corresponding to a time-inhomogeneous Markov process) are pseudo-differential operators with symbols in suitable 'symbol classes', i.e. the symbols have sufficient regularity and growth behaviour. 
Under this condition on the symbol it is verified in Theorem 1.8 in \cite{Boettcher-2008} that the transition function $T_{s,t}$ maps $\HH^r(\R^d)$ into itself. This mapping property is based on general theory of pseudo-differential operators as described in \cite{Eskin-1981}, \cite{Hoh-1998} and \cite{Jacob-2001,Jacob-2002}. 

Our representation result in Theorem \ref{theo-evolution problem} can be applied to this context if the transition operator $T_{s,t}$ is a strongly continuous ES on $\HH^r(\R^d)$, that is, it maps $\HH^r(\R^d)$ into $\HH^r(\R^d)$. In other words, it is sufficient to prove an analogous result to Proposition \ref{propos-parade Beispiel for evo systems} for $\HH^r(\R^d)$. 

We can apply our representation Theorem \ref{theo-evolution problem} and get as corollary a result related to Böttchers representation result without any growth and regularity conditions on the symbol of the infinitesimal generator.

\begin{cor}\label{cor-weak evolution system on H}
 Under the assumptions of Theorem \ref{theo-evolution problem}, let $(T_{s,t})_{s\le t}$ be a strong\-ly continuous $ES$ on $\HH^r(\R^d)$, then the solutions $F_t$ of the weak evolution problem \eqref{eq-initial value problem general} on $\HH^r(\R^d)$ have the representation
\begin{equation}\label{eq-cor-representation of weak solution on H}
 F_t(s) = T_{s,t}F_t(t) - \int_s^t T_{s,r}G(r) dr. 
\end{equation}
\end{cor}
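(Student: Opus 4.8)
The statement is a direct specialisation of Theorem~\ref{theo-evolution problem} to the concrete Banach space $\B=\HH^r(\R^d)$, so the plan is short: check that $\HH^r(\R^d)$ fits the abstract framework of Section~\ref{sec-Comp of MP}, and then invoke the theorem.

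First I would record that $(\HH^r(\R^d),\langle\cdot,\cdot\rangle_r)$ is a separable Hilbert space (see \citet[Theorem~4.1]{Eskin-1981}), hence in particular a Banach space. Consequently all the objects appearing in Theorem~\ref{theo-evolution problem} are meaningful with $\B=\HH^r(\R^d)$: the strongly continuous evolution system $(T_{s,t})_{s\le t}$, its family of infinitesimal generators $(A_s)_{s\ge0}$ defined through \eqref{eq-infi gen} on their domains $\D(A_s)\subset\HH^r(\R^d)$, and the Banach-valued Riemann integral $\int_s^t T_{s,r}G(r)\,dr$; moreover Lemma~\ref{lem-basic lemma for evo system} applies verbatim to this $T$ and its generators.

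Next I would observe that the three hypotheses of Theorem~\ref{theo-evolution problem} are exactly what the corollary assumes. The hypothesis that $(T_{s,t})_{s\le t}$ be a strongly continuous $ES$ on $\HH^r(\R^d)$ is stated explicitly; right continuity of $r\mapsto T_{s,r}G(r)$, existence of $\int_s^tT_{s,r}G(r)\,dr$, and the requirement that $F_t$ solve the weak evolution problem \eqref{eq-initial value problem general} on $\HH^r(\R^d)$ are precisely the ``assumptions of Theorem~\ref{theo-evolution problem}'' carried over into the corollary. Applying that theorem with $\B=\HH^r(\R^d)$ then gives the representation \eqref{eq-theo-solution to the evolution problem}, i.e.\ $F_t(s)=T_{s,t}F_t(t)-\int_s^tT_{s,r}G(r)\,dr$, which is exactly \eqref{eq-cor-representation of weak solution on H}.

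There is no real obstacle here; the only thing worth emphasising is the conceptual point made in the surrounding text. Unlike the pseudo-differential-operator approach of \cite{Boettcher-2008}, this route requires no growth or ``symbol class'' conditions on the symbol of $A_s$: the single structural input replacing them is that $T_{s,t}$ map $\HH^r(\R^d)$ strongly continuously into itself — the $\HH^r$-analogue of Proposition~\ref{propos-parade Beispiel for evo systems} — which is assumed outright rather than derived. If one wanted a self-contained statement, the natural follow-up (not needed for the corollary as phrased) would be to give sufficient conditions, e.g.\ in terms of the symbol of $(T_{s,t})$, under which this $ES$-property on $\HH^r(\R^d)$ holds.
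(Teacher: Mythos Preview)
Your proposal is correct and matches the paper's approach: the paper gives no separate proof of this corollary, introducing it with the remark that one ``can apply our representation Theorem~\ref{theo-evolution problem} and get as corollary'' the stated result, i.e.\ it is treated as an immediate specialisation of Theorem~\ref{theo-evolution problem} to the Banach space $\B=\HH^r(\R^d)$. Your additional observation that $\HH^r(\R^d)$ is a Hilbert (hence Banach) space and your unpacking of which hypotheses are being carried over are exactly the points one would spell out if writing the proof in full.
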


For the application of Corollary \ref{cor-weak evolution system on H} first note that any transition operator $(T_{s,t})$ of a Markov process $(X_t)_{t\ge0}$ has a representation as a pseudo-differential operator.

For the proof let $f\in\HH^r(\R^d),\,r\ge0$ and consider a time-inhomogeneous Markov process $X=(X_t)_{t\ge0}$ with transition function $(P_{s,t})_{s\le t}$. Denote by $F(f)$ the Fourier transform of $f$ and by $F^{-1}$ the inverse Fourier transform. Then
\begin{align}
 T_{s,t}f(x)&= E_{s,x}\Bigl(F^{-1}\bigl(F(f)\bigr)(X_t)\Bigr)\notag\\
 &= E_{s,x}\biggl(\frac{1}{(2\pi)^{d/2}}\int_{\R^d}e^{-i\langle X_t,\xi\rangle}F(f)(\xi)d\xi\biggl)\notag\\
 &= \frac{1}{(2\pi)^{d/2}}\int_{\R^d}e^{-i\langle x,\xi\rangle}q_s(t,x,\xi)\hat f(\xi)d\xi,\label{eq-evolution system of general MP is a PDO}
\end{align}
where $q_s(t,x,\xi):=E\bigl(e^{-i\langle X_t -x,\xi\rangle}|X_s =x\bigr)$. Thus $(T_{s,t})$ is a pseudo-differential operator with symbol $q_s(t,x,\xi)$.

For applications of Corollary \ref{cor-weak evolution system on H} to comparison results we have to establish that $T_{s,t}:\HH^r(\R^d) \to \HH^r(\R^d)$. This can be done in the case of PII, which are particularly suitable for the pseudo-differential operator approach. 
\begin{example}\textbf{\emph{(Transition functions of PII on $\HH^r(\R^d)$)}}\\ 
\label{trans func of PII are pdo s}
Let $X = (X_t)_{t\ge 0}$ be PII with characteristic function 
$E\bigl(e^{-i\langle X_t -X_s,\xi\rangle}\bigr)=\exp(\int_s^t\theta_u(-i\xi)du)=:\hat{\mu}_{s,t}(-\xi).$
Then from the representation in equation \eqref{eq-evolution system of general MP is a PDO} we obtain for $f\in\HH^r(\R^d)$, that
 \begin{equation}\label{eq-ex-evolution system of a PII is a PDO}
 T_{s,t}f(x) = \frac{1}{(2\pi)^{d/2}}\int_{\R^d}e^{-i\langle x,\xi\rangle}\hat{\mu}_{s,t}(-\xi)\hat f(\xi)d\xi.
 \end{equation}
Hence the characteristic function $\hat{\mu}_{s,t}(-\xi)$ is the symbol of $T_{s,t}$. It is not difficult to see that the transition operator of a PII is a pseudo-differential operator with bounded $C^\infty$-symbol, if all absolute moments of the L\'{e}vy measure exists. 
Thus the symbol lies in the symbol class $S^0_0$. In consequence the conditions on the symbol for the mapping property $T_{s,t}:\HH^r(\R^d) \to \HH^r(\R^d)$ in Theorem 1.8 of \cite{Boettcher-2008} are fulfilled and Corollary \ref{cor-weak evolution system on H} gives the necessary representation result. 
result Corollary \ref{cor-comp of PII w.r.t. cx}.

As consequence of Corollary \ref{cor-weak evolution system on H} the comparison theorem (Theorem \ref{theo-comparison result}) allows us to state a comparison result for PII for function classes $\F\subset \HH^r(\R^d)$. Note however that by this approach, based on $\HH^r(\R^d)$, we need the strong condition of existence of all absolute moments of the L\'{e}vy measure, while our comparison result based on modified $\bar{\LLl}^2_2(\nu)$ spaces as in Corollary \ref{cor-comp of PII w.r.t. cx} is established under much weaker conditions.
\end{example}

For general Markov processes with symbol $q_s(t,x,\xi)$ dependent on $x$ it is not easy to check that its symbol has suitable regularity and growth behaviour. Hence this method is difficult to apply for time-inhomogeneous Markov processes as for example for time-inhomogeneous diffusions. Thus it seems that the approach in this paper based on our representation result for strongly continuous evolution system in general Banach spaces is more flexible and easier to apply in examples.

\paragraph*{Acknowledgement:} AS would like to acknowledge financial support by the DFG (German Science Foundation) for the project SCHN 1231/2-1.
\bibliographystyle{plainnat}
{
\bibliography{CompMP}
\label{sec:bib}
}

\paragraph*{Address:}Department of Mathematical Stochastics, University of Freiburg,\\e-mail: ruschen@stochastik.uni-freiburg.de;
FAX: +49 761 203 5661; Phone: +49 761 203 5665

\noindent and

Faculty of Mathematics, TU Dortmund \\ e-mail: aschnurr@math.tu-dortmund.de, FAX: +49 231 755 3064 ; Phone: +49 231 755 3099 

\end{document}